\documentclass[12pt]{amsart}
\usepackage{amsmath}
\usepackage{amsfonts}
\usepackage{amsthm}
\usepackage{amssymb}
\usepackage{amscd}
\usepackage[all]{xy}
\usepackage{enumerate}
\usepackage{hyperref}
\usepackage{etoolbox}
\usepackage{dynkin-diagrams}
\usepackage{multirow}

\textheight22truecm
\textwidth17truecm
\oddsidemargin-0.5truecm
\evensidemargin-0.5truecm

\makeatletter
\@namedef{subjclassname@2020}{%
  \textup{2020} Mathematics Subject Classification}
\makeatother

\keywords{Rational Homogeneous Varieties, Infinitesimal Deformations, Torelli Problem, Massey Products, Log Parallelizable Varieties} 

\subjclass[2020]{14C34, 14D07, 14J10, 14J40, 14M17}

\pagestyle{myheadings}

\theoremstyle{plain}
\newtheorem{thm}{Theorem}[section]
\newtheorem{thml}{Theorem}

\newtheorem{prop}[thm]{Proposition}

\newtheorem{cor}[thm]{Corollary}

\newtheorem{lem}[thm]{Lemma}

\theoremstyle{definition}
\newtheorem{defn}[thm]{Definition}

\newtheorem{prob}[thm]{Problem}

\newtheorem{rmk}[thm]{Remark}
\newcommand{\lieg}{\mathfrak{g}}
\newcommand{\lieh}{\mathfrak{h}}
\newcommand{\lien}{\mathfrak{n}}

\newcommand{\sB}{\mathcal{B}}
\newcommand{\sC}{\mathcal{C}}

\newcommand{\sE}{\mathcal{E}}
\newcommand{\sF}{\mathcal{F}}
\newcommand{\sG}{\mathcal{G}}

\newcommand{\sI}{\mathcal{I}}
\newcommand{\sJ}{\mathcal{J}}

\newcommand{\sL}{\mathcal{L}}

\newcommand{\sO}{\mathcal{O}}
\newcommand{\sP}{\mathcal{P}}

\newcommand{\sX}{\mathcal{X}}

\newcommand{\mC}{\mathbb{C}}

\newcommand{\mP}{\mathbb{P}}

\newcommand{\mZ}{\mathbb{Z}}
\newcommand{\mR}{\mathbb{R}}

\newcommand{\Ima}{\mathrm{Im}\,}

\newcommand{\e}{\varepsilon}

\newcommand{\pic}{\mathrm{Pic}}
\usepackage{color}



\numberwithin{equation}{section}

\newcommand{\beba}  {\begin{equation}\begin{array}{rcl}}

\newcommand{\eaee}  {\end{array}\end{equation}}

\pgfkeys{/Dynkin diagram,
edge length=1.3cm,
fold radius=.5cm,
indefinite edge/.style={
draw=black,
fill=white,
thin,
densely dashed}}

\makeatletter
\def\l@section{\@tocline{1}{0pt}{1pc}{}{}}
\def\l@subsection{\@tocline{2}{0pt}{1pc}{4.6em}{}}
\def\l@subsubsection{\@tocline{3}{0pt}{1pc}{7.6em}{}}
\renewcommand{\tocsection}[3]{%
  \indentlabel{\@ifnotempty{#2}{\makebox[2.3em][l]{%
    \ignorespaces#1 #2.\hfill}}}#3}
\renewcommand{\tocsubsection}[3]{%
  \indentlabel{\@ifnotempty{#2}{\hspace*{2.3em}\makebox[2.3em][l]{%
    \ignorespaces#1 #2.\hfill}}}#3}
\renewcommand{\tocsubsubsection}[3]{%
  \indentlabel{\@ifnotempty{#2}{\hspace*{4.6em}\makebox[3em][l]{%
    \ignorespaces#1 #2.\hfill}}}#3}
\makeatother

\setcounter{tocdepth}{4}


\title{On Massey products and Rational homogeneous varieties}

\author{Luca Rizzi}
\address{L. Rizzi: JSPS International Fellow, Graduate School of Mathematical Sciences, the University of Tokyo,
Tokyo, 153-8914 Japan;
\texttt{rizzil@ms.u-tokyo.ac.jp}}

\usepackage[foot]{amsaddr}

\begin{document}

\markboth{}{}
\begin{abstract} 
We study the equivalence between infinitesimal Torelli theorem for smooth hypersurfaces in rational homogeneous varieties with Picard number one and the theory of generalized Massey products.
This equivalence shows that the differential of the period map vanishes on an infinitesimal deformation if and only if certain twisted differential forms are elements of the Jacobian ideal of the hypersurface.
We also prove an infinitesimal Torelli theorem result for smooth hypersurfaces in log parallelizable varieties.
\end{abstract}
\maketitle

\section{Introduction}

Let $V\subset \mP^n$ be a smooth hypersurface defined by a homogeneous polynomial $F$ of degree $d$. Denote by $S$ the polynomial ring in $n+1$ variables and by $\sJ$ the Jacobian ideal of $F$, that is the ideal of $S$ generated by the partial derivatives of $F$. The Jacobian ring is by definition the quotient $R = S/\sJ$ and it is naturally graded. 

Griffiths in \cite{Gri1} showed that the graded pieces of the ring $R$ give all the pieces of the Hodge structure of $V$. Furthermore, the piece of degree $d$ gives exactly the space of infinitesimal deformations of $V$. Using this results he proved that the infinitesimal Torelli problem for projective hypersurfaces is reduced to the study of the injectivity of suitable maps given by polynomial multiplication.

This idea was later developed by Green in \cite{green1} to deal with the general case of smooth sufficiently ample hypersurfaces of a smooth projective variety. Green introduced the notions of pseudo-Jacobian ideal and of pseudo-Jacobian ring and developed an approach to the problem extending the one by Griffiths. In this paper Green was able to solve the infinitesimal and generic Torelli problem for smooth sufficiently ample hypersurfaces.
  
Now take a smooth projective variety $Y$ with $\pic(Y)=[H]\cdot \mathbb Z$ where $H$ is an effective divisor. Hence for any effective divisor $X\subset Y$ there exists a unique $d\in\mathbb N_{\geq 0}$ such that $X$ is an element of the linear system $|dH|$. Thus a natural problem is to concretely specify the ampleness condition required by Green and find the minimum $d\in\mathbb N$ such that the differential of the period map $d\sP_{X}$ is injective for $X$ a smooth element of $|dH|$. 

In \cite{RZ2} and \cite{RZ3} an interesting equivalence between this problem and the theory of \emph{Massey products} is developed in the case of $Y$ a projective space and $Y$ a Grassmannian variety respectively. In this paper we generalize these results for the case $Y$ rational homogeneous variety of Picard number one.

Let us briefly recall the notion of generalized Massey product, more details of the general theory are discussed in Section \ref{aggiunta}; see also the therein quoted bibliography. 
Denote by $\sO_{Y}(1)$ the ample generator of $\pic(Y)$ and by $\sO_{Y}(a)=\sO_{Y}(1)^{\otimes a}$ its twist by an integer $a$ and take an infinitesimal deformation $\xi\in H^1(X,T_X)$ of $X$. If $X=(\tau=0)$, $\tau\in H^{0}(Y,\sO_{Y}(d))$, any infinitesimal deformation is induced by a local family $X+tR=0$ where $R\in H^{0}(Y,\sO_{Y}(d))$; see: Proposition \ref{etutto}.
We twist by $\sO_X(2)$ the exact sequence associated to the infinitesimal deformation $\xi\in H^1(X,T_X)$ to obtain
\begin{equation}
\label{estensionedivisoreduedue}
0\to\sO_X(2)\to \Omega^1_{\sX}(2)|_X\to\Omega^1_X(2)\to0.
\end{equation}
Now take $n=\dim X$ and a generic $n+1$-dimensional vector space $W<H^0(X,\Omega^1_X(2))$ contained in the kernel of the cup-product homomorphism $\delta_{\xi}\colon H^0(X,\Omega^1_X(2))\to H^1(X,\sO_X(2))$ and denote by $\lambda^{i}W$ the image of $\bigwedge^iW$ through the natural homomorphism $\lambda^i\colon\bigwedge^i H^0(X,\Omega^1_X(2))\to  H^0(X,\bigwedge^{i}(\Omega^1_X(2)))$.

Let $\sB:=\langle\eta_{1},\ldots,\eta_{n+1}\rangle$ be a basis of $W$ and $s_1,\ldots, s_{n+1}\in H^0(X, \Omega^1_{\sX}(2)|_X)$ liftings of, respectively,  $\eta_{1},\ldots,\eta_{n+1}$, then the map 
$$\Lambda^{n+1}\colon\bigwedge^{n+1} H^0(X, \Omega^1_{\sX}(2)|_X)\to H^0(X,  {\rm{det}}(\Omega^1_{\sX}(2)|_X))$$ gives the twisted differential form $ \Omega:=\Lambda^{n+1}(s_1\wedge s_2 \wedge\ldots\wedge s_{n+1})\in H^0(X,{\rm{det}}( \Omega^1_{\sX}(2)|_X))$ which is called {\it{generalized adjoint form or Massey product associated to}} $\xi$, $W$, and $\sB$. On the other hand we can also consider the $n+1$ global forms $\omega_i:=\lambda^{n}(\eta_1\wedge\ldots\wedge\eta_{i-1}\wedge{\widehat{\eta_i}}\wedge\eta_{i+1}\wedge\ldots\wedge\eta_{n+1})\in \lambda^nW$.

By Sequence (\ref{estensionedivisoreduedue}), 
 $\det(\Omega^1_{X}(2))\otimes\sO_{X}(2)=\det (\Omega^1_{\sX}(2)|_X)$, and we can construct an obvious homomorphism:
\begin{equation}
\label{aggiuntasequenzaaa}
H^0(X,\sO_X(2))\otimes \lambda^nW \to H^0(X, {\rm{det}}(\Omega^1_{\sX}(2)|_X).
\end{equation} 

The Generalized Adjoint Theorem, see \ref{theoremA}, fully characterizes  the condition of $\Omega$ being in the image of this map, that is 
 $$\Omega \in \Ima(H^0(X,\sO_X(2)\otimes \lambda^nW \to H^0(X,{\rm{det}}(\Omega^1_{\sX}(2)|_X).$$
The important point is that to check this condition is equivalent to check if $d\sP_{X}(\xi)=0$. 

More deeply, we can construct an explicit space of generalized adjoint forms associated to $\xi$ in the following way.
First we lift the sections $\eta_i$'s from $H^0(X,\Omega^1_X(2))$ to $H^0(Y,\Omega^1_Y(2))$, then we take the wedge product to obtain a twisted volume form $\widetilde{\Omega}$, independent from $\xi$, such that 
$\widetilde{\Omega}\in H^0(Y,\Omega_Y^N(2N))$, where $N=\dim Y$. Finally by taking the form $R\in H^0(Y,\sO_Y(d))$ which induces the deformation $\xi$, we consider the class $R\cdot{\widetilde{\Omega}}\in  H^0(Y,\Omega_Y^N(2N+d))$ which by adjunction restricts to $\Omega\in H^0(X,\Omega^{N-1}_X(2N))=H^0(X,{\rm{det}}(\Omega^1_{X}(2))\otimes\sO_{X}(2))$. 
The following theorem highlights the aforementioned equivalence between infinitesimal Torelli problem and Massey products.
\begin{thml}
\label{A}
Let $Y$ one of the rational homogeneous varieties in Table \ref{tabellapesi} and $X=(\tau=0)$ a smooth hypersurface of degree $d\geq3$, except the cases $Y=Q^3$, $d=3$ and $Y=(D_3,\alpha_2)$, $d=3$. The following are equivalent
\begin{itemize}
	\item[\textit{i})] the differential of the period map $d\sP_X$ is zero on the infinitesimal deformation $\xi$ induced by $R\in H^0(Y,\sO_Y(d))$;
	\item[\textit{ii})] $R$ is an element of the pseudo-Jacobi ideal $\sJ_{d}$;
	\item[\textit{iii})]  for a generic $\xi$-Massey product $\Omega$ it holds $$\Omega\in \Ima H^0(X,\sO_X(2))\otimes \lambda^nW\to H^0(X,\Omega^{N-1}_X(2N));$$
	\item[\textit{iv})] if $\widetilde{\Omega}\in H^0(Y,\Omega_Y^N(2N))$ restricts to a generalized Massey product then $R\widetilde{\Omega}\in \sJ_{2N-k(Y)+d}$.
\end{itemize}
\end{thml}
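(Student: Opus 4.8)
The plan is to establish the chain of equivalences $i) \Leftrightarrow ii) \Leftrightarrow iii) \Leftrightarrow iv)$ by following the strategy of \cite{RZ2} and \cite{RZ3}, where the two new ingredients compared to those papers are: the description of the Hodge theory of hypersurfaces in a rational homogeneous variety of Picard number one in terms of a pseudo-Jacobi ring, and the explicit identification of the relevant twist $2N-k(Y)+d$ coming from the adjunction formula on $Y$ (here $k(Y)$ is determined by $\omega_Y = \sO_Y(-k(Y))$, read off from Table \ref{tabellapesi}). First I would recall Green's machinery: for $X$ a smooth sufficiently ample hypersurface in $Y$, the primitive cohomology of $X$ is computed by the graded pieces of the pseudo-Jacobi ring $R_\bullet = \bigoplus_q H^0(Y, \Omega_Y^{N-1}(qH) \otimes \omega_Y^{-1})/(\text{pseudo-Jacobi ideal})$, and $H^1(X,T_X)$ is identified with the degree-$d$ piece; under this identification the differential of the period map $d\sP_X(\xi)$ is the cup-product map with $\xi$, and its kernel on each Hodge piece is controlled by multiplication in $R_\bullet$. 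The equivalence $i) \Leftrightarrow ii)$ then amounts to: $d\sP_X(\xi) = 0$ on the whole Hodge structure if and only if $R \in \sJ_d$, which is exactly the symmetry/duality of the pseudo-Jacobi ring pairing together with Macaulay-type arguments, valid once $d \geq 3$ and outside the two excluded low-degree cases (where the relevant graded piece is too small or the pairing degenerates — this is why $Q^3, d=3$ and $(D_3,\alpha_2), d=3$ must be removed).

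For $ii) \Leftrightarrow iii)$ and $ii) \Leftrightarrow iv)$, I would invoke the Generalized Adjoint Theorem \ref{theoremA}: the condition $\Omega \in \Ima(H^0(X,\sO_X(2)) \otimes \lambda^n W \to H^0(X, \det \Omega^1_{\sX}(2)|_X))$ is, by that theorem, equivalent to $d\sP_X(\xi) = 0$, which gives $i) \Leftrightarrow iii)$ directly once we check that a \emph{generic} choice of the $(n+1)$-dimensional subspace $W < \ker \delta_\xi$ and basis $\sB$ suffices — i.e.\ that genericity does not lose information, which follows because the vanishing of $d\sP_X(\xi)$ is a closed condition not depending on $W$. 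For $iv)$, the point is to transport the adjoint condition from $X$ to $Y$: lifting the $\eta_i$ to $H^0(Y, \Omega^1_Y(2))$, wedging to get $\widetilde\Omega \in H^0(Y, \Omega_Y^N(2N))$, and multiplying by $R$ gives $R\widetilde\Omega \in H^0(Y, \Omega_Y^N(2N+d))$; by the adjunction sequence this restricts to $\Omega$, and membership of $R\widetilde\Omega$ in the pseudo-Jacobi ideal $\sJ_{2N-k(Y)+d}$ (the degree being forced by $\Omega_Y^N \otimes \sO_Y(2N+d) = \omega_Y(2N+d) = \sO_Y(2N - k(Y) + d)$ in the line-bundle normalization, matched against the grading convention of $R_\bullet$) is the precise algebraic shadow of $\Omega$ lying in the image of \eqref{aggiuntasequenzaaa}. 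I would verify this by unwinding the commutative diagram relating the Koszul-type complex on $Y$ computing $H^0(Y,\Omega_Y^N(\ast))$ to the pseudo-Jacobi presentation, exactly as in \cite{RZ3} but keeping track of the twist $k(Y)$.

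The main obstacle I expect is twofold. First, one must check that the hypotheses of Green's vanishing/surjectivity theorems (the ``sufficiently ample'' condition) are actually met for \emph{every} $d \geq 3$ on each $Y$ in Table \ref{tabellapesi}, rather than only asymptotically; this requires cohomology vanishing statements of the form $H^j(Y, \Omega_Y^p(t)) = 0$ for the relevant $p, j, t$, which for rational homogeneous spaces follow from Bott's theorem but must be checked case by case, and it is precisely here that the two exceptional pairs $(Q^3, 3)$ and $((D_3,\alpha_2),3)$ drop out. Second, and more delicate, is the genericity analysis in part $iii)$: one needs that for a \emph{general} $W$ the Massey product $\Omega$ is well-defined (independent of the liftings $s_i$ modulo the image of \eqref{aggiuntasequenzaaa}) and that its class detects $d\sP_X(\xi)$ faithfully — the subtlety being that $\dim \ker \delta_\xi$ could a priori be smaller than $n+1$, in which case no such $W$ exists and the statement $iii)$ must be interpreted vacuously or the implication re-examined; I would handle this by noting that $\dim \ker \delta_\xi \geq n+1$ is guaranteed by the same Hodge-theoretic dimension count (it is the rank of a piece of $H^{n,0}$-type cohomology of $X$) whenever $d \geq 3$ outside the excluded cases, closing the argument.
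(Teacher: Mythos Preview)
Your skeleton is right and you cite the correct ingredients (the Generalized Adjoint Theorem~\ref{theoremA}, the pseudo-Jacobi ring, Macaulay duality), but there is a genuine gap in your application of Theorem~\ref{theoremA} for $ii)\Leftrightarrow iii)$. You write that the Massey condition ``is, by that theorem, equivalent to $d\sP_X(\xi)=0$'', but that is not what Theorem~\ref{theoremA} says. What it gives you is that $\Omega\in\Ima(\dots)$ is equivalent to $\xi\in\ker\bigl(H^1(X,T_X)\to H^1(X,T_X(D_W))\bigr)$, and this becomes $\xi=0$ only if the fixed divisor $D_W$ of the sublinear system $|\lambda^n W|$ is zero. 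You never address this. In the paper the point is that $\Omega^1_X(2)$ is globally generated (Snow's tables, or the Borel--Weil analysis), and then \cite[Proposition 3.1.6]{PZ} forces $D_W=0$ for generic $W$. Your discussion of genericity (``the vanishing of $d\sP_X(\xi)$ is a closed condition not depending on $W$'') is aimed at the wrong issue: the question is not whether a generic $W$ detects $\xi$, but whether a generic $W$ has no fixed part so that Theorem~\ref{theoremA} yields a clean vanishing criterion.

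A second, smaller gap: you propose to secure $\dim\ker\delta_\xi\geq n+1$ by identifying it with ``the rank of a piece of $H^{n,0}$-type cohomology of $X$''. This is not how it goes. One has $\ker\delta_\xi=H^0(X,\Omega^1_X(2))$ (since $H^1(X,\sO_X(2))=0$), and the required bound $h^0(X,\Omega^1_X(2))\geq\dim X+1$ is not a Hodge number of $X$ but a representation-theoretic computation on $Y$: one shows $H^0(X,\Omega^1_X(2))\cong H^0(Y,\Omega^1_Y(2))$ and then computes the latter via Borel--Weil, the Konno filtration of $\lien^+$, and the Weyl dimension formula, case by case over Table~\ref{tabellapesi}. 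This is substantial and occupies an entire section of the paper; it is also where the twist by $2$ (rather than some other $a$) earns its keep. Finally, for $i)\Leftrightarrow ii)$ the paper does not rederive infinitesimal Torelli from Macaulay duality as you suggest; it simply invokes Konno's infinitesimal Torelli theorem as a black box together with the identification $R_d\cong H^1(X,T_X)$. The Macaulay-type argument is used instead for $ii)\Leftrightarrow iv)$, combined with the fact (Proposition~\ref{surgettivita}) that every section of $H^0(X,\Omega^N_Y(2N)|_X)$ is a linear combination of decomposable ones, so that knowing $R\widetilde\Omega\in\sJ$ for Massey-type $\widetilde\Omega$ forces it for all $\widetilde\Omega$.
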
 See Theorem \ref{main}.
We point out that this results extends the one of \cite{RZ3} and even in the case of Grassmannians provides a better bound for the degree $d$ of the hypersurface $X$.
As in \cite{RZ2}, \cite{RZ3}, we focus on hypersurfaces because the infinitesimal Torelli is known to fail if the codimension is $\geq2$, see \cite{CZ}.

Since in \cite{RZ2} we study the case of smooth hypersurfaces in projective spaces, another interesting generalization that can be undertaken is the study of smooth hypersurfaces in toric varieties and, more in general, of smooth hypersurfaces in log parallelizable varieties.
This is what we do in Section \ref{par}.

Recall that a log parallelizable variety is given by a pair $(X,D)$ where $X$ is a smooth variety and $D$ is a reduced normal crossing divisor in $X$ such that the logarithmic tangent sheaf $T_X(-\textnormal{log } D)$ is the direct sum of copies of $\sO_X$.

In this section we start by fixing some vanishing results, in particular the analogue of the vanishing theorem of Bott-Steenbrink-Danilov \cite[Theorem 7.1]{cox}. 
Then we prove the infinitesimal Torelli theorem for certain hypersurfaces in log parallelizable varieties, see Theorem \ref{torellilog}.
\begin{thml}
\label{B}
Take an ample line bundle $L$ on $X$, $\dim X=n$, and $Z$ a smooth element in the linear system $|L|$. Assume that $\Omega^n_X\otimes L$ is ample and that 
\begin{equation}
H^0(X,\Omega^n_X\otimes L)\otimes H^0(X,\Omega^n_X\otimes L^{n-1})\to H^0(X,{\Omega^n_X}\otimes\Omega^n_X\otimes L^n)
\end{equation}
is surjective. 
Then the infinitesimal Torelli theorem holds for $Z$.
\end{thml}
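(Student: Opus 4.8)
\emph{Proof strategy.} The idea is to transplant the Griffiths--Green residue description of the Hodge structure of a hypersurface from projective space to the pair $(X,D)$, using as the essential vanishing input the Bott--Steenbrink--Danilov-type theorem for log parallelizable varieties proved earlier in this section in place of Bott vanishing, and then to read off infinitesimal Torelli for $Z$ from a Macaulay-type duality for the resulting pseudo-Jacobian ring together with the surjectivity hypothesis. Throughout set $m=n-1=\dim Z$; by adjunction $\omega_Z\cong\Omega^n_X\otimes L|_Z$, where $\omega_Z$ is the canonical bundle of $Z$.

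\textbf{Step 1: the pseudo-Jacobian ring.} First I would attach, to the defining section of $Z$, a pseudo-Jacobian ideal $\sJ$ inside $\bigoplus_{M}H^0(X,M)$ (over the relevant twisting line bundles $M$) and form the quotient ring $R$ with pieces $R^{(M)}=H^0(X,M)/\sJ_M$, exactly as in Green's treatment. The ampleness of $L$ and of $\Omega^n_X\otimes L$, together with the Bott--Steenbrink--Danilov-type vanishing and the standard residue and conormal sequences --- comparing $\Omega^p_X$ with $\Omega^p_X(\log D)$ along $D$, and restricting from $X$ to $Z$ --- should yield: (a) the Griffiths-type identifications
\[
H^{m-q,\,q}_{\mathrm{prim}}(Z)\;\cong\;R^{(\Omega^n_X\otimes L^{\,q+1})}\qquad (0\le q\le m),
\]
under which the cup product $\xi\smile-$ with a deformation $\xi\in H^1(Z,T_Z)$ --- represented through the (here surjective) Kodaira--Spencer map by a section $R_\xi\in H^0(X,L)$ --- is multiplication by $R_\xi$, shifting the twist by $L$; and (b) that $R$ is Artinian Gorenstein, with perfect multiplication pairings $R^{(M)}\times R^{(M')}\to R^{(\Omega^n_X\otimes\Omega^n_X\otimes L^{\,n+1})}\cong\mC$ onto a one-dimensional socle whenever $M\otimes M'=\Omega^n_X\otimes\Omega^n_X\otimes L^{\,n+1}$ --- the analogue of Macaulay's theorem, the vanishing being precisely what makes the associated Koszul complexes exact. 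Since $X$ is log parallelizable one has $H^{p,q}(X)=0$ for $p\ne q$, so $H^{m,0}(Z)=H^{m,0}_{\mathrm{prim}}(Z)$ and the primitive pieces carry everything the period map of $Z$ detects off the Hodge diagonal.

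\textbf{Step 2: closing the argument.} To prove $d\sP_Z$ injective it is enough to show that the first graded component of $d\sP_Z(\xi)$ is nonzero for $\xi\ne 0$ (a fortiori $d\sP_Z(\xi)\ne 0$), i.e. that
\[
\xi\smile-\colon H^{m,0}_{\mathrm{prim}}(Z)\cong R^{(\Omega^n_X\otimes L)}\longrightarrow H^{m-1,1}_{\mathrm{prim}}(Z)\cong R^{(\Omega^n_X\otimes L^{2})}
\]
does not vanish. Assume $\xi\ne 0$, so $R_\xi\notin\sJ$, and suppose $R_\xi\cdot R^{(\Omega^n_X\otimes L)}=0$ in $R^{(\Omega^n_X\otimes L^{2})}$. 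Multiplying by $R^{(\Omega^n_X\otimes L^{\,n-1})}$ and using
\[
(\Omega^n_X\otimes L)\otimes(\Omega^n_X\otimes L^{\,n-1})=\Omega^n_X\otimes\Omega^n_X\otimes L^{\,n},\qquad L\otimes(\Omega^n_X\otimes\Omega^n_X\otimes L^{\,n})=\Omega^n_X\otimes\Omega^n_X\otimes L^{\,n+1},
\]
we land in the socle. The surjectivity hypothesis says $H^0(X,\Omega^n_X\otimes L)\otimes H^0(X,\Omega^n_X\otimes L^{\,n-1})\twoheadrightarrow H^0(X,\Omega^n_X\otimes\Omega^n_X\otimes L^{\,n})$, hence also $R^{(\Omega^n_X\otimes L)}\otimes R^{(\Omega^n_X\otimes L^{\,n-1})}\twoheadrightarrow R^{(\Omega^n_X\otimes\Omega^n_X\otimes L^{\,n})}$; therefore $R_\xi\cdot R^{(\Omega^n_X\otimes\Omega^n_X\otimes L^{\,n})}=0$ in the socle. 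Since $\Omega^n_X\otimes\Omega^n_X\otimes L^{\,n}$ and $L$ multiply to the socle bundle, $R^{(\Omega^n_X\otimes\Omega^n_X\otimes L^{\,n})}$ is Macaulay-dual to $R^{(L)}\ni R_\xi$, and perfectness of the pairing forces $R_\xi\in\sJ$, a contradiction. Hence $d\sP_Z(\xi)\ne0$ for all $\xi\ne0$, so the infinitesimal Torelli theorem holds for $Z$.

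\textbf{Main obstacle.} The real content is Step 1: constructing the pseudo-Jacobian ring of $Z\subset X$ and establishing the Griffiths-type description together with the Gorenstein / Macaulay duality in the log parallelizable setting. This is where the ampleness assumptions and the Bott--Steenbrink--Danilov-type vanishing are used, and the delicate part is the boundary divisor $D$: one must pass between logarithmic and ordinary differential forms via the residue sequences, check that the extra cohomology carried by $D$ dies in the relevant degrees, and verify that the degree bookkeeping produces the socle bundle $\Omega^n_X\otimes\Omega^n_X\otimes L^{\,n+1}$ and the piece $R^{(\Omega^n_X\otimes\Omega^n_X\otimes L^{\,n})}$ dual to $H^1(Z,T_Z)$ --- which is exactly what makes the stated surjectivity the precise hypothesis needed. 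The low-dimensional edge cases (e.g.\ $n=2$, where $Z$ is a curve) should be handled directly.
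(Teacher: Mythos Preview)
Your route is genuinely different from the paper's. You propose to transplant Green's pseudo-Jacobian ring machinery to the log parallelizable setting: establish Griffiths-type identifications $H^{m-q,q}_{\mathrm{prim}}(Z)\cong R^{(\Omega^n_X\otimes L^{q+1})}$, prove a Macaulay/Gorenstein duality for $R$, represent each $\xi\in H^1(Z,T_Z)$ by a section $R_\xi\in H^0(X,L)$, and then use the surjectivity hypothesis together with the socle pairing to force $R_\xi\in\sJ$. The paper does none of this. It follows the Flenner--Ikeda strategy instead: dualize the top piece of $d\sP_Z$ to the cup-product map
\[
H^{n-2}(Z,\Omega^1_Z)\otimes H^0(Z,\Omega^{n-1}_Z)\longrightarrow H^{n-2}(Z,\Omega^1_Z\otimes\Omega^{n-1}_Z)
\]
and prove it surjective directly, by building explicit surjections $\phi,\phi',\psi$ from the $H^0$-level data down to the target via residue maps and the Koszul-type exact sequence
\[
0\to\Omega^2_X(\log Z)\to\Omega^2_X(Z)\to\Omega^3_X(2Z)|_Z\to\cdots\to\Omega^n_X((n-1)Z)|_Z\to 0,
\]
with the Bott--Steenbrink--Danilov vanishing killing all the intermediate cohomology. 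No Jacobian ring, no Macaulay duality, no identification of $H^1(Z,T_Z)$ with any ring piece.

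The paper's argument is both shorter and more robust. Beyond the fact that your Step~1 is (as you acknowledge) the entire content and is left undone, there is a specific soft spot in your Step~2: you assert that the Kodaira--Spencer map $H^0(X,L)\to H^1(Z,T_Z)$ is surjective, so that every $\xi$ is represented by some $R_\xi$. This is not automatic for log parallelizable varieties; surjectivity would need $H^1(Z,T_X|_Z)=0$, which in turn feeds off $H^1(X,T_X)$, and the latter can be nonzero (take $X$ an abelian variety with $D=\emptyset$, where $T_X$ is trivial and $H^1(X,T_X)=H^1(X,\sO_X)^n\neq 0$). The paper's approach sidesteps this entirely: proving surjectivity of the dual gives injectivity of $d\sP_Z$ on \emph{all} of $H^1(Z,T_Z)$ at once, with no need to realize deformations as sections. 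Your route, if the ring-theoretic machinery were fully established, would yield finer information (explicit descriptions of all primitive Hodge pieces and of the IVHS), but for the bare infinitesimal Torelli statement the paper's direct cohomological argument is the cleaner tool.
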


This paper is organized as follows. In Section \ref{sez2} and \ref{cotangent} we review the theory of rational homogeneous varieties, in particular the Generalized Borel-Weyl Theorem \ref{bw} and we give some important results on the twisted cotangent sheaves $\Omega^p(a)$. These results are essential to apply the theory of generalized Massey products which is recalled in Section \ref{aggiunta}. In Section \ref{green} we review the notions of pseudo-Jacobian ideal and ring originally introduced by Green. Finally in Sections \ref{sez6} and \ref{sez7} we show why $2$ is the correct twist to consider in Sequence (\ref{estensionedivisoreduedue}) and we put everything together to obtain Theorem \ref{A}.
In Section \ref{par} we study the hypersurfaces of log-parallelizable varieties and we prove Theorem \ref{B}.

\section{Rational homogeneous varieties with Picard number 1}
\label{sez2}
In this section we briefly review the theory of rational homogeneous varieties while we fix the notation. For representation theory and Lie algebras we refer to \cite{FH} and \cite{Hump}, see also \cite{O} for some notes on rational homogeneous varieties. 

A classical theorem of Borel and Remmert \cite{BR} asserts that a homogeneous projective complex manifold is a direct product of an abelian variety by a rational homogeneous space. The latter can be described as a quotient $G/P$, where $G$ is a semi-simple algebraic group and $P$ a parabolic subgroup. 
Moreover such rational homogeneous space can be decomposed as
\begin{equation}
G/P=G_1/P_1\times\dots\times G_k/P_k
\label{decomp}
\end{equation} where the $G_i$ are simple algebraic groups and $P_i<G_i$ are parabolic subgroups. 

As customary we will denote by $\lieg$ the Lie algebra associated to an algebraic group $G$, that is $\lieg=T_eG$, and by $\text{ad}\colon \lieg\to \text{End}(\lieg)$ its adjoint representation, defined via the Lie bracket by $\text{ad}(x)(y)=[x,y]$. The Killing form on $\lieg$ is then $K(x,y)=\text{tr}(\text{ad}(x)\circ\text{ad}(y))$.

Let $\lieh$ be the Cartan subalgebra of $\lieg$ and recall that an element $\alpha$ in its dual $\lieh^*$ is called a \emph{root} if 
$$
\lieg_\alpha=\{x\in\lieg\mid \text{ad}(h)(x)=\alpha(h)x\quad\forall h\in \lieh\}
$$ 
is non trivial. In this case it is well known that $\lieg_\alpha$ is a 1-dimensional subspace of $\lieg$.
The set of roots is usually denoted by $\Phi$ and one has the \emph{Cartan decomposition} of $\lieg$
$$
\lieg=\lieh\oplus\bigoplus_{\alpha\in\Phi}\lieg_\alpha.
$$

Choosing an hyperplane not containing any root, we identify a decomposition $\Phi=\Phi^+\cup\Phi^-$ into positive and negative roots.
An element of $\Phi ^{+}$ is called a simple root if it cannot be written as the sum of two elements of $\Phi ^{+}$, denote by $\Delta=\{\alpha_1,\dots,\alpha_l\}$ a set of simple roots for $\lieg$.
Every root $\alpha \in \Phi$  is a linear combination of elements of $\Delta$ with integer coefficients which are either all non-negative or all non-positive.

Since the Killing form $K$ is nondegenerate, it gives an isomorphism $\lieh\cong\lieh^*$ and also induces a nondegenerate form on $\lieh^*$ which we will denote using simple parenthesis $(\alpha,\beta)$.
This form is positive definite on the real span of the roots $\lieh_\mR^*$: it is known that at most two root lengths occur in $\Phi$, hence we will speak of long roots and short roots.
We also denote by $W$ be the \emph{Weyl group of $\lieg$} generated by the reflections $\sigma_\alpha$, with respect to the hyperplane orthogonal to $\alpha\in \Phi$. The fundamental Weyl chamber $\sC$ is the convex set
$$
\sC=\{\lambda\in \lieh_\mR^*\mid (\lambda,\alpha)>0\quad \forall\alpha\in \Delta\}.
$$

Given  a representation $V$ of $\lieg$, $\lambda\in\lieh^*$ is called a weight if the subspace $V_\lambda=\{v\in V\mid h\cdot v=\lambda(h)v,\ \forall h\in\lieh\}$ is non trivial.
Weights have many important properties, in particular they form the \emph{group of weights}
$$
\Lambda=\{\lambda\in\lieh_\mR^*\mid \langle\lambda,\alpha\rangle:=2(\lambda,\alpha)/(\alpha,\alpha)\in \mZ\quad\forall\alpha\in\Phi\}.
$$ This is a lattice generated by the \emph{fundamental weights} $\lambda_1,\dots,\lambda_l$ defined by the property that $\langle\lambda_i,\alpha_j\rangle=\delta_{ij}$.
 A weight $\lambda$ is \emph{dominant} if $\langle\lambda,\alpha_i\rangle\geq 0$ for every $\alpha_i\in\Delta$, equivalently $\lambda$ is a combination with non-negative integers of the fundamental weights. The weight
\begin{equation}
\delta=\sum_{i=1}^l\lambda_i=\frac{1}{2}\sum_{\alpha\in\Phi^+}\alpha
\label{delta}
\end{equation} is dominant and satisfies $\langle\delta,\alpha_i\rangle=1$ for every $\alpha_i$.
The lattice $\Lambda$ is partially ordered: $\lambda_1>\lambda_2$ if $\lambda_1-\lambda_2$ is a sum of positive roots, hence for every representation of $\lieg$, it make sense to consider its highest and lowest weights.
If such a representation is irreducible, the highest and lowest weight are unique and they uniquely identify the representation (up to isomorphism).

Now consider again a semi-simple group $G$. Every parabolic subgroup is identified, up to conjugation, by a subset of the simple roots, $\Sigma\subset \Delta$. The cardinality of this subset $\Sigma$ determines the rank of the Picard group of $G/P$ and since in this paper we are interested in varieties with $\pic\cong\mZ$, it will be enough consider varieties $G/P$ where $G$ is a simple group and $P$ is a parabolic subgroup generated by only one simple root.
Hence we recall that all the simple Lie algebras are classically known and fall in four families $A_n, B_n, C_n$, and $D_n$ with five exceptions $E_6, E_7, E_8, F_4$, and $G_2$, see Table \ref{table1} for their Dynkin diagram and the numbering of the simple roots used in the rest of the paper.

\begin{table}
	\centering
		\begin{tabular}{ll}
		$A_l$ & \dynkin[labels={1,2,l-1,l}]A{}\\
			$B_l$ & \dynkin[labels={1,2,l-2,l-1,l}]B{}\\
			$C_l$ & \dynkin[labels={1,2,l-2,l-1,l}]C{}\\
			$D_l$ & \dynkin[labels={1,2,l-3,l-2,l-1,l}]D{}\\
			$E_6$ & \dynkin[labels={1,2,3,4,5,6}]E6\\
			$E_7$ & \dynkin[labels={1,2,3,4,5,6,7}]E7\\
			$E_8$ & \dynkin[labels={1,2,3,4,5,6,7,8}]E8\\
			$F_4$ & \dynkin[labels={1,2,3,4}]F4\\
			$G_2$ & \dynkin[reverse arrows, labels={1,2}]G2
		\end{tabular}
		\caption{Dynkin diagrams of simple Lie algebras}
		\label{table1}
\end{table}

Now the subgroup $P$ is constructed in the following way.
Take $\alpha_r$, $1\leq r\leq l$, a simple root and define these three sets:
\begin{equation}
\Phi_1=\{ \alpha\in\Phi\mid\alpha=\sum_{i=1}^ln_i\alpha_i,\ n_r=0\},
\label{set1}
\end{equation}
\begin{equation}
\Phi(\lien^+)=\{ \alpha\in\Phi^+\mid\alpha=\sum_{i=1}^ln_i\alpha_i,\ n_r>0\},
\label{set2}
\end{equation}
\begin{equation}
\Phi(\lien)=\Phi_1\cup\Phi(\lien^+)
\label{set3}
\end{equation}
From these and the Cartan decomposition we obtain three subalgebras of $\lieg$:
\begin{equation}
\lieg_1=\lieh\oplus\bigoplus_{\alpha\in\Phi_1}\lieg_\alpha,\quad \lien^+=\bigoplus_{\alpha\in\Phi(\lien^+)}\lieg_\alpha,\quad \lien=\lieh\oplus\bigoplus_{\alpha\in\Phi(\lien)}\lieg_\alpha
\label{tresub}
\end{equation}
In particular $\lien$ is the subalgebra which corresponds to the parabolic subgroup $P$, that is $T_eP=\lien$.

Since all the rational homogeneous varieties with $\pic\cong\mZ$ are constructed in this way, in this paper every variety will be associated to a pair $(\lieg,\alpha_r)$. Note that different pairs can still produce isomorphic varieties.

As an important example we have the irreducible Hermitian symmetric spaces of compact type which in turn contain well known classes like Grassmannians $(A_l,\alpha_r)$ and Quadric hypersurfaces $(B_l,\alpha_1)$ and $(D_l,\alpha_1)$.

We finish this section with some important results on homogeneous vector bundles.
Homogeneous vector bundles are constructed in the following way:
take $\rho\colon P\to \text{GL}(V)$ a representation of $P$, then the associated bundle $E_\rho$ is given by the quotient of $G\times V$ by the following action of $P$
$$
(g,v)\to (gp,\rho(p^{-1})v).
$$
We will say that $E_\rho$ is irreducible if $\rho$ is an irreducible representation.
It is known, see \cite[Page 89]{K}, that there is a one to one correspondence between irreducible representations of $P$ and irreducible representations of $\lieg_1$ with lowest weight $-\lambda$, where $\lambda$ is in the sublattice $\Lambda^+_1=\{\lambda\in \Lambda\mid (\lambda,\alpha_i)\geq0,\ \forall i\neq r\}$.
Hence every irreducible vector bundle will be denoted by $E_{-\lambda}$ to highlight its corresponding lowest weight.

\begin{defn}
A weight $\lambda\in\Lambda$ is called 
\begin{itemize}
	\item[-] \textit{singular} if $(\lambda,\alpha)=0$ for some $\alpha\in \Phi$
	\item[-] \textit{regular with index $p$} if it is not singular and there exist exactly $p$ roots $\alpha\in \Phi^+$
with $(\lambda,\alpha)<0$.
\end{itemize}
\end{defn}
The following theorem, originally proved by Bott in \cite{bott} and then generalized by Kostant in \cite{Kos}, is fundamental to compute the cohomology of irreducible homogeneous vector bundle.
\begin{thm}[Generalized Borel-Weil]
\label{bw}
Let $E_{-\lambda}$ an irreducible homogeneous vector bundle as above. Then
\begin{enumerate}
	\item if the weight $\lambda+\delta$ is singular, $H^q(G/P,E_{-\lambda})=0$ for all $q$
	\item if the weight $\lambda+\delta$ is regular of index $p$, $H^q(G/P,E_{-\lambda})=0$ for $q\neq p$ and $H^p(G/P,E_{-\lambda})$ is an irreducible $G$-module with lowest weight $-\mu$. Here $\mu$ is the uniquely determined element of the closure of the fundamental Weyl chamber such that $\mu+\delta$ is congruent to $\lambda+\delta$ under the Weyl group.
\end{enumerate}
\end{thm}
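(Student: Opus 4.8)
The plan is to prove this in the classical way: reduce the computation from $G/P$ to the full flag variety $G/B$, establish the Borel--Weil base case there, and then run Bott's inductive argument along the $\mP^1$-fibrations attached to the simple roots, keeping track of the $\delta$-shifted action of the Weyl group.

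\emph{Reduction to $G/B$.} Let $\pi\colon G/B\to G/P$ be the natural projection; its fibers are isomorphic to $P/B$, the full flag variety of the Levi factor $L$ whose Lie algebra is $\lieg_1$. Since $\lambda\in\Lambda^+_1$, the weight $-\lambda$ restricts to a dominant weight for $L$, so the classical Borel--Weil theorem applied to $L$ shows that the line bundle $\sL_{-\lambda}$ on $G/B$ has fiberwise cohomology concentrated in degree $0$, where it recovers exactly the irreducible $L$-representation of lowest weight $-\lambda$ that defines $E_{-\lambda}$. Hence $\pi_*\sL_{-\lambda}=E_{-\lambda}$ and $R^q\pi_*\sL_{-\lambda}=0$ for $q>0$, so the Leray spectral sequence degenerates and $H^q(G/P,E_{-\lambda})\cong H^q(G/B,\sL_{-\lambda})$ for all $q$. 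As the notions of singularity and index of $\lambda+\delta$ depend only on $\lambda+\delta$ and on $\Phi^+$ (the same for $G/B$ and $G/P$), it suffices to prove both assertions for the line bundle $\sL_{-\lambda}$ on $G/B$.

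\emph{Base case and inductive machinery.} If $\lambda$ is dominant, equivalently $\lambda+\delta$ lies in the open chamber $\sC$ (regular of index $p=0$), then realizing $G/B$ as the closed orbit through the lowest weight line in $\mP(V_\lambda)$ identifies $H^0(G/B,\sL_{-\lambda})$ with the irreducible $G$-module of lowest weight $-\lambda$, and the Borel--Weil vanishing gives $H^q(G/B,\sL_{-\lambda})=0$ for $q>0$; here $\mu=\lambda$. For the general case, attach to each simple root $\alpha_i$ the minimal parabolic $P_i\supset B$ and the $\mP^1$-bundle $q_i\colon G/B\to G/P_i$. Restricting $\sL_\nu$ to a fiber gives a line bundle on $\mP^1$ whose degree is governed by $\langle\nu,\alpha_i\rangle$, and computing $(q_i)_*$ and $R^1(q_i)_*$ from the cohomology of $\sO_{\mP^1}(m)$ together with Leray yields Bott's dichotomy: if $\langle\nu+\delta,\alpha_i\rangle=0$ then $H^\ast(G/B,\sL_\nu)=0$, while if $\langle\nu+\delta,\alpha_i\rangle$ is negative there is a degree-raising isomorphism $H^q(G/B,\sL_\nu)\cong H^{q-1}(G/B,\sL_{\sigma_i\cdot\nu})$, where $\sigma_i\cdot\nu=\sigma_i(\nu+\delta)-\delta$ moves $\nu+\delta$ strictly closer to the chamber $\sC$.

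\emph{Iteration.} Suppose $\lambda+\delta$ is regular of index $p$. There is a unique $w\in W$ with $w(\lambda+\delta)\in\sC$, and the standard identity $\ell(w)=\#\{\alpha\in\Phi^+\mid(\lambda+\delta,\alpha)<0\}=p$ holds. Choosing a reduced expression for $w$ and applying the degree-raising isomorphism once for each letter, the cohomological degree increases by exactly $1$ at each step and after $p$ steps the weight reaches the base case; this forces $H^q(G/B,\sL_{-\lambda})=0$ for $q\neq p$ and identifies $H^p(G/B,\sL_{-\lambda})$ with the irreducible $G$-module of lowest weight $-\mu$, where $\mu+\delta=w(\lambda+\delta)$ lies in $\overline{\sC}$. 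If instead $\lambda+\delta$ is singular, then along any path toward $\sC$ some simple reflection meets a wall $\langle\nu+\delta,\alpha_i\rangle=0$, and the first alternative of the dichotomy forces $H^\ast(G/B,\sL_{-\lambda})=0$.

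\emph{Main obstacle.} The delicate part is the bookkeeping that matches Bott's index $p$ with the length $\ell(w)$ and guarantees that the degree shifts accumulate to precisely $p$: this hinges on the exact conventions for lowest versus highest weights, on the sign of the fiber degree $\langle\nu,\alpha_i\rangle$, and on the compatibility of the $\delta$-shifted action with wall-crossing. Verifying the $\mP^1$-bundle computation and the vanishing/shift alternative with these conventions, together with the Leray degeneration in the reduction step, is where the genuine work lies. (An alternative route, due to Kostant, computes the Lie algebra cohomology $H^\ast(\lien,V)$ directly and matches it with sheaf cohomology, trading the geometry for a harmonic-theoretic input.)
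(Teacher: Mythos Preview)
The paper does not prove this theorem: it is quoted as a classical result of Bott \cite{bott} and Kostant \cite{Kos} and used as a black box. Your sketch is therefore not being compared against any argument in the paper, but it is a faithful outline of Bott's original inductive proof via the $\mP^1$-fibrations $G/B\to G/P_i$ together with the Leray reduction from $G/P$ to $G/B$; the identification of the index $p$ with the length of the Weyl element carrying $\lambda+\delta$ into the dominant chamber is the standard bookkeeping, and you correctly flag the sign/convention issues as the place where care is needed. One small point: in the reduction step you should say that $\lambda$ (not $-\lambda$) is dominant for the Levi $L$, since $\lambda\in\Lambda_1^+$ means $(\lambda,\alpha_i)\ge 0$ for $i\neq r$; the lowest weight of the resulting $L$-module is then $-\lambda$, which is what you need. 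The alternative Kostant route you mention at the end is in fact the generalization the paper cites, so both approaches are relevant here.
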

\begin{rmk}
In particular if $\lambda+\delta$ is regular of index $0$, $\lambda$ is dominant, therefore $H^0(G/P,E_{-\lambda})$ is the irreducible module of lowest weight $-\lambda$, which we will denote by $G_{-\lambda}$.
\end{rmk}

The dimension of such representations can be computed using the Weyl formula \cite[Page 139]{Hump}
\begin{equation}
\dim G_{-\lambda}=\prod_{\alpha\in \Phi^+}\frac{(\lambda+\delta,\alpha)}{(\delta,\alpha)}=\prod_{\alpha\in \Phi^+}\frac{\langle\lambda+\delta,\alpha\rangle}{\langle\delta,\alpha\rangle}.
\label{weilformula}
\end{equation}

\section{The Cotangent sheaf and its twists}
\label{cotangent}

In this section we study the cotangent sheaf $\Omega^1_{G/P}$ and its twists.
From now on we will denote $G/P$ simply by $Y$; as seen before it is given by a pair $(\lieg,\alpha_r)$. Denote by $\sO_{Y}(1)$ the ample generator of $\pic(Y)$ and by $\sO_{Y}(a)=\sO_{Y}(1)^{\otimes a}$ its twist by an integer $a$.
Note that $\sO_{Y}(1)$ is induced by the irreducible representation of $P$ with lowest weight $-\lambda_r$, hence $\sO_{Y}(a)$ has lowest weight $-a\lambda_r$. 

It is well known that the tangent sheaf of $Y$ is a homogeneous bundle and the representation of $P$ associated to this bundle comes from the adjoint representation: $P\to\text{GL}(\lieg/\lien)$. Hence we can identify the cotangent bundle with the $P$-module $\lien^+$ and, more generally, the sheaf $\Omega^p_{Y}$ of $p$-forms on $Y$ will be defined by the module $\bigwedge^p\lien^+$.

The problem with these modules is that they are not irreducible in general and therefore we can not apply directly the Generalized Borel-Weil Theorem \ref{bw} when we need to compute their cohomology. 
In the case of irreducible Hermitian symmetric spaces of compact type, the modules $\bigwedge^p\lien^+$ are at least completely reducible and their decomposition is given by Kostant in \cite[Corollary 8.2]{Kos}, so it is still possible to apply the Borel-Weil theorem to their direct summands.

In the general case however they are not even completely reducible and the idea of how to deal with them is given in \cite{K1}.
Konno in his works introduces a filtration of the modules $\bigwedge^p\lien^+$ as follows: for every positive integer $i$, define $F^i\bigwedge^p\lien^+$ the linear subspace of $\bigwedge^p\lien^+$ spanned by vectors whose weights are $\lambda=\sum_{j=1}^ln_j\alpha_j$ with $n_r\geq i$. It is known that for two roots $\alpha$ and $\beta$ 
\[ 
  [\lieg_\alpha,\lieg_\beta] = 
  \begin{dcases*} 
  \lieg_{\alpha+\beta} & if  $\alpha+\beta$ is a root \\ 
  0 & otherwise 
  \end{dcases*} 
\]
hence it is not difficult to see that $F^i\bigwedge^p\lien^+$ is invariant under the adjoint action and if we set 
$$
G^i\bigwedge^p\lien^+:=F^i\bigwedge^p\lien^+/F^{i+1}\bigwedge^p\lien^+
$$ 
this is a completely reducible $P$-module and as a $\lieg_1$ module it can be identified with the subspace of $\bigwedge^p\lien^+$ spanned by the vectors whose weight satisfies $n_r=i$.
We use the notation $G^i\Omega^p_{Y}$ to indicate the homogeneous vector bundle induced by $G^i\bigwedge^p\lien^+$.
From this filtration we get a spectral sequence with the following properties, see \cite[Proposition 2.2.2]{K}
\begin{prop}
There is a spectral sequence
\begin{equation}
E^{i,q-i}_1=H^q(Y,G^i\Omega^p_Y(a))\Rightarrow H^q(Y,\Omega^p_Y(a))
\label{spectral}
\end{equation}
where $G^i\Omega^p_Y(a)=G^i\Omega^p_Y\otimes \sO_Y(a)$.

If the irreducible decomposition of $G^i\bigwedge^p\lien^+$ is 
$$
G^i\bigwedge^p\lien^+=\bigoplus_j E_{-\mu_{ij}}
$$ with $E_{-\mu_{ij}}$ irreducible with lowest weight $-\mu_{ij}$, then $-\mu_{ij}$ is a sum of $p$ distinct roots in $\Phi(\lien^+)$ and $n_r=i$.
Moreover if $E_{-\mu_{ij}}\otimes\sO_Y(a)=E_{-(\mu_{ij}+a\lambda_r)}$ is the irreducible homogeneous vector bundle with lowest weight $-(\mu_{ij}+a\lambda_r)$ we have
\begin{equation}
H^q(Y,G^i\Omega^p_Y(a))=\bigoplus_j H^q(Y,E_{-(\mu_{ij}+a\lambda_r)}).
\end{equation}
\end{prop}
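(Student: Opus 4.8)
\emph{Plan of proof.} This is essentially \cite[Proposition 2.2.2]{K}, and I would reconstruct Konno's argument. For the first assertion the point is to promote the filtration $F^\bullet\bigwedge^p\lien^+$ to a filtration of sheaves and then take the spectral sequence of a filtered object. The subspaces $F^i\bigwedge^p\lien^+$ are $\mathfrak p$-submodules: the Levi $\lieg_1$ acts through the $\lieg_\alpha$ with $\alpha\in\Phi_1$ (i.e. $n_r=0$), preserving the coefficient $n_r$ of every weight, while the nilradical $\lien^+$ of $\mathfrak p$ acts through the $\lieg_\alpha$ with $\alpha\in\Phi(\lien^+)$ (i.e. $n_r>0$), which by the bracket rule strictly raises $n_r$ on $\bigwedge^p\lien^+$; so both preserve the condition $n_r\geq i$, and since $P$ is connected the filtration is $P$-stable, as already observed in the discussion above. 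Applying the exact functor that sends a $P$-module to its associated homogeneous bundle on $Y$ yields a filtration of $\Omega^p_Y$ by homogeneous subbundles $F^i\Omega^p_Y$ with graded pieces $G^i\Omega^p_Y$, and tensoring with the line bundle $\sO_Y(a)$ gives a filtration $F^i\Omega^p_Y(a)$ with graded pieces $G^i\Omega^p_Y(a)$. From the short exact sequences $0\to F^{i+1}\Omega^p_Y(a)\to F^i\Omega^p_Y(a)\to G^i\Omega^p_Y(a)\to 0$ and the induced long exact cohomology sequences one builds an exact couple whose derived couples give the spectral sequence $E^{i,q-i}_1=H^q(Y,G^i\Omega^p_Y(a))\Rightarrow H^q(Y,\Omega^p_Y(a))$; since every weight of $\bigwedge^p\lien^+$ has $n_r$ between $p$ and a bounded value, the filtration is finite, so the spectral sequence is bounded and converges.

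For the second assertion, recall (again from the preceding discussion) that since the nilradical $\lien^+$ strictly raises $n_r$, it maps $F^i\bigwedge^p\lien^+$ into $F^{i+1}\bigwedge^p\lien^+$ and hence acts as zero on $G^i\bigwedge^p\lien^+=F^i/F^{i+1}$; therefore $G^i\bigwedge^p\lien^+$ is a module over the reductive algebra $\lieg_1$ and splits into irreducibles, $G^i\bigwedge^p\lien^+=\bigoplus_j E_{-\mu_{ij}}$ with $E_{-\mu_{ij}}$ of lowest weight $-\mu_{ij}$, and as a $\lieg_1$-module this graded piece is the span of the weight vectors of $\bigwedge^p\lien^+$ with $n_r=i$, so in particular each $-\mu_{ij}$ has $n_r=i$. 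That $-\mu_{ij}$ is a sum of $p$ distinct roots of $\Phi(\lien^+)$ is then immediate from the fact that a basis of $\bigwedge^p\lien^+$ consists of the decomposable vectors $x_{\beta_1}\wedge\cdots\wedge x_{\beta_p}$ with $\beta_1,\dots,\beta_p$ pairwise distinct in $\Phi(\lien^+)$ and $x_\beta$ spanning $\lieg_\beta$, such a vector having weight $\beta_1+\cdots+\beta_p$.

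Finally, tensoring the decomposition by $\sO_Y(a)$, which is the homogeneous line bundle of lowest weight $-a\lambda_r$, shifts each lowest weight by $-a\lambda_r$, so $E_{-\mu_{ij}}\otimes\sO_Y(a)=E_{-(\mu_{ij}+a\lambda_r)}$ and $G^i\Omega^p_Y(a)=\bigoplus_j E_{-(\mu_{ij}+a\lambda_r)}$. As cohomology commutes with finite direct sums, $H^q(Y,G^i\Omega^p_Y(a))=\bigoplus_j H^q(Y,E_{-(\mu_{ij}+a\lambda_r)})$, each summand on the right being irreducible and hence computable by the Generalized Borel--Weil Theorem \ref{bw}.

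I do not expect a serious difficulty: the proposition is largely formal once Konno's filtration is available. The points requiring care are the equivariance — that $F^i$ is $P$-stable and, crucially, that its associated graded becomes a module over the reductive $\lieg_1$, which is exactly what makes Borel--Weil applicable — and the bookkeeping of lowest weights and of the twist $-a\lambda_r$. The only genuinely computational input, were one to want explicit cohomology rather than the structural statement, is the determination of the irreducible decompositions $G^i\bigwedge^p\lien^+=\bigoplus_j E_{-\mu_{ij}}$, which is not needed here.
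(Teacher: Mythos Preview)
Your proposal is correct and matches the paper's treatment: the paper does not give its own proof of this proposition but simply refers to \cite[Proposition 2.2.2]{K}, and you have accurately reconstructed Konno's argument. Your account of why the filtration is $P$-stable, why the nilradical acts trivially on the graded pieces (making them $\lieg_1$-modules), and the bookkeeping of lowest weights under the twist by $\sO_Y(a)$ is all in order.
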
 
Using this spectral sequence we can give a first estimate on the cohomology $h^0(Y,\Omega^p_Y(a))$
\begin{cor}
$
h^0(Y,\Omega^p_Y(a))\leq\sum_i h^0(Y,G^i\Omega^p_Y(a))
$
\end{cor}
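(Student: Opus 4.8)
The plan is to deduce the inequality directly from the filtration underlying the spectral sequence (\ref{spectral}), using nothing more than left-exactness of $H^0(Y,-)$ together with finiteness of the filtration; no genuinely hard step is involved.

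First I would observe that the chain of $P$-submodules $F^{i+1}\bigwedge^p\lien^+\subseteq F^i\bigwedge^p\lien^+$ induces a filtration of the homogeneous bundle $\Omega^p_Y$ by homogeneous subbundles $F^i\Omega^p_Y$, with successive quotients $F^i\Omega^p_Y/F^{i+1}\Omega^p_Y=G^i\Omega^p_Y$. Since every weight occurring in $\bigwedge^p\lien^+$ is a sum of $p$ roots lying in $\Phi(\lien^+)$, and each such root has $n_r\geq1$, every such weight has $n_r\geq p$; hence the filtration is finite, with $F^i\Omega^p_Y=\Omega^p_Y$ for $i\leq p$ and $F^i\Omega^p_Y=0$ for $i$ large.

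Next I would tensor each inclusion $F^{i+1}\Omega^p_Y\hookrightarrow F^i\Omega^p_Y$ by the locally free sheaf $\sO_Y(a)$ to obtain short exact sequences $0\to F^{i+1}\Omega^p_Y(a)\to F^i\Omega^p_Y(a)\to G^i\Omega^p_Y(a)\to0$, apply the left-exact functor $H^0(Y,-)$, and read off $h^0(Y,F^i\Omega^p_Y(a))\leq h^0(Y,F^{i+1}\Omega^p_Y(a))+h^0(Y,G^i\Omega^p_Y(a))$ for every $i$. Summing these finitely many inequalities, the intermediate $F^i$ terms telescope and, using $F^p\Omega^p_Y(a)=\Omega^p_Y(a)$ and $F^i\Omega^p_Y(a)=0$ for $i\gg0$, I would conclude $h^0(Y,\Omega^p_Y(a))\leq\sum_i h^0(Y,G^i\Omega^p_Y(a))$.

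Equivalently, one can phrase the same argument through (\ref{spectral}) itself: the abutment $H^0(Y,\Omega^p_Y(a))$ carries a finite filtration whose graded pieces are the terms $E^{i,-i}_\infty$, and each $E^{i,-i}_\infty$ is a subquotient of $E^{i,-i}_1=H^0(Y,G^i\Omega^p_Y(a))$, so summing dimensions gives the bound at once. The only points needing verification --- finiteness of the filtration (clear since $\bigwedge^p\lien^+$ is finite-dimensional) and left-exactness of global sections --- are completely routine, so there is no real obstacle here; the content of the statement is simply that this crude but useful estimate follows at once from the spectral sequence.
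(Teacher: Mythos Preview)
Your argument is correct and matches the paper's approach: the corollary is stated in the paper without proof as an immediate consequence of the spectral sequence (\ref{spectral}), and your second paragraph spells out exactly that standard deduction (graded pieces of the abutment are subquotients of the $E_1$-terms). The direct filtration-and-telescoping version you give first is of course the same argument unpacked.
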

However we can get a more precise result on $h^0(Y,\Omega^1_Y(a))$. In this case in fact the modules $G^i\lien^+$ are irreducible and Konno in \cite[Table 3]{K1} gives a list of their lowest weights. We recall in Table \ref{tabellapesi} this list for convenience. Note that some of the pairs $(\lieg,\alpha_r)$ are omitted due to existing isomorphisms $(\lieg,\alpha_r)\cong(\lieg',\alpha_{r'})$.

\begin{table}
\begin{center}
\begin{tabular}{ccc}
    \hline
		$\lieg$&r&Lowest weights of $G^i\lien^+$\\
		\hline
		$A_l$, $(l\geq1)$&$1\leq r\leq l$&$\alpha_r$\\
		\hline		
		\multirow{2}{*}{$B_l$, $(l\geq2)$}&1&$\alpha_1$\\
    &$2\leq r\leq l-1$&$\alpha_r,\ \lambda_r-\lambda_{r-2}$\\
    \hline
		\multirow{2}{*}{$C_l$, $(l\geq3)$}& $2\leq r\leq l-1$ & $\alpha_r,\ 2\lambda_r-2\lambda_{r-1}$\\
    & $l$ & $\alpha_l$\\
    \hline
		\multirow{3}{*}{$D_l$, $(l\geq3)$}& $1$ & $\alpha_1$\\
		& $2\leq r\leq l-2$ & $\alpha_r,\ \lambda_r-\lambda_{r-2}$\\
    & $l-1$ & $\alpha_{l-1}$\\ 
    \hline
		\multirow{4}{*}{$E_6$}& $1$ & $\alpha_1$\\
		& $2$ & $\alpha_2,\ \lambda_2$\\
    & $3$ & $\alpha_3,\ \lambda_3-\lambda_6$\\
		&$4$ & $\alpha_4,\ \lambda_4-\lambda_1-\lambda_6,\ \lambda_4-\lambda_2$\\
    \hline
		\multirow{7}{*}{$E_7$}& $1$ & $\alpha_1,\ \lambda_1$\\
		& $2$ & $\alpha_2,\ \lambda_2-\lambda_7$\\
    & $3$ & $\alpha_3,\ \lambda_3-\lambda_6,\ \lambda_3-\lambda_1$\\
		&$4$ & $\alpha_4,\ \lambda_4-\lambda_1-\lambda_6,\ \lambda_4-\lambda_2-\lambda_7,\ \lambda_4-\lambda_3$\\
		&$5$&$\alpha_5,\ \lambda_5-\lambda_1-\lambda_7,\ \lambda_5-\lambda_2$\\
		&$6$&$\alpha_6,\ \lambda_6-\lambda_1$\\
		&$7$&$\alpha_7$\\
    \hline	
		\multirow{8}{*}{$E_8$}& $1$ & $\alpha_1,\ \lambda_1-\lambda_8$\\
		& $2$ & $\alpha_2,\ \lambda_2-\lambda_7,\ \lambda_2-\lambda_1$\\
    & $3$ & $\alpha_3,\ \lambda_3-\lambda_6,\ \lambda_3-\lambda_1-\lambda_8,\ \lambda_3-\lambda_2$\\
		&$4$ & $\alpha_4,\ \lambda_4-\lambda_1-\lambda_6,\ \lambda_4-\lambda_2-\lambda_7,\ \lambda_4-\lambda_3-\lambda_8,\ \lambda_4-\lambda_1-\lambda_2,\ \lambda_4-\lambda_6$\\
		&$5$&$\alpha_5,\ \lambda_5-\lambda_1-\lambda_7,\ \lambda_5-\lambda_2-\lambda_8,\ \lambda_5-\lambda_3,\ \lambda_5-\lambda_6$\\
		&$6$&$\alpha_6,\ \lambda_6-\lambda_1-\lambda_8,\ \lambda_6-\lambda_2,\ \lambda_6-\lambda_7$\\
		&$7$&$\alpha_7,\ \lambda_7-\lambda_1,\ \lambda_7-\lambda_8$\\
		&$8$&$\alpha_8,\ \lambda_8$\\
    \hline		
		\multirow{4}{*}{$F_4$}& $1$ & $\alpha_1,\ \lambda_1$\\
		& $2$ & $\alpha_2,\ \lambda_2-2\lambda_4,\ \lambda_2-\lambda_1$\\
    & $3$ & $\alpha_3,\ 2\lambda_3-\lambda_1-2\lambda_4,\ \lambda_3-\lambda_4,\ 2\lambda_3-\lambda_2$\\
		&$4$ & $\alpha_4,\ 2\lambda_4-\lambda_1$\\
    \hline	
		$G_2$&$2$&$\alpha_2,\ \lambda_2$\\
		\hline
\end{tabular}
\end{center}
\caption{Lowest Weights of $G^i\lien^+$}
\label{tabellapesi}
\end{table} 

\begin{rmk}
Note that the weights listed in Table \ref{tabellapesi} are of two kinds:
\begin{enumerate}
	\item $G^1\lien^+$ has always lowest weight $\alpha_r$
	\item $G^i\lien^+$, $i\geq2$, has always lowest weight $\lambda_r-\lambda'$ or $2\lambda_r-\lambda'$ where $\lambda'$ is a sum of fundamental weights which does not contain $\lambda_r$.
\end{enumerate}
\end{rmk}
Since we want to apply the Generalized Borel Weil to the twists $G^i\Omega_Y^1(a)$, $a\geq1$, all the weights of Table \ref{tabellapesi} will be changed by $-a\lambda_r$ and we get:
\begin{lem}
\label{studiopesi}
The weights 
\begin{enumerate}
	\item $-\alpha_r+a\lambda_r+\delta$ are singular if $a=1$ and regular of index 0 if $a>1$
	\item $-\lambda_r+\lambda'+a\lambda_r+\delta$ are regular of index 0
	\item $-2\lambda_r+\lambda'+a\lambda_r+\delta$ are singular if $a=1$ and regular of index 0 if $a>1$
\end{enumerate}
\end{lem}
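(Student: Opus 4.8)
The strategy is a direct computation using the defining properties of the fundamental weights and the Weyl vector $\delta$. Recall from \eqref{delta} that $\delta=\sum_{i=1}^l\lambda_i$ satisfies $\langle\delta,\alpha_i\rangle=1$ for every simple root $\alpha_i$, so for any weight $\nu$ the quantity $\langle\nu+\delta,\alpha_i\rangle$ is what controls regularity and index. First I would reduce the problem to computing the pairings $\langle\mu,\alpha_i\rangle$ for $\mu$ each of the three weights in the statement, against all \emph{positive} roots $\alpha\in\Phi^+$, and then count how many give a strictly negative value (for the index) and whether any give zero (for singularity versus regularity). Since every positive root is a non-negative integral combination $\alpha=\sum_j n_j\alpha_j$ of simple roots with $n_j\ge 0$, and the Killing form is positive on the real span, the sign of $\langle\nu,\alpha\rangle$ is governed by the $\langle\nu,\alpha_j\rangle$ together with the (known, case-independent) structure of $\Phi^+$.

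For case (1), the weight is $\mu_1+\delta$ with $\mu_1 = (a-1)\lambda_r - \alpha_r$. Writing $\alpha_r=\sum_j\langle\alpha_r,\lambda_j\rangle^{\vee}\cdots$ is awkward; instead I would use $\langle\mu_1+\delta,\alpha_i\rangle = \langle(a-1)\lambda_r,\alpha_i\rangle - \langle\alpha_r,\alpha_i\rangle + 1 = (a-1)\delta_{ir} - \langle\alpha_r,\alpha_i\rangle + 1$. Since $\langle\alpha_r,\alpha_i\rangle$ is the $(r,i)$ entry of the Cartan matrix, this equals $(a-1)\delta_{ir} + 1 - C_{ri}$, which for $i=r$ is $(a-1)+1-2 = a-2$, and for $i\neq r$ is $1 - C_{ri} \ge 1 \ge 0$ (as off-diagonal Cartan entries are $\le 0$). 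When $a=1$ the $r$-th coordinate is $-1$ but one checks $\mu_1+\delta = \sigma_{\alpha_r}(\delta)$ lies on the reflecting hyperplane, hence is singular; when $a\ge 2$ all coordinates are $\ge 0$, i.e.\ $\mu_1+\delta$ is dominant regular, index $0$. Case (3), with $\mu_3 = (a-2)\lambda_r + \lambda'$ and $\lambda'$ a sum of fundamental weights not involving $\lambda_r$, is handled identically: the non-$r$ coordinates of $\mu_3+\delta$ are $\langle\lambda',\alpha_i\rangle + 1 \ge 1$, and the $r$-th coordinate is $a-1$, which is $0$ when $a=1$ (singular, as it then lies on the wall $(\,\cdot\,,\alpha_r)=0$) and $\ge 1$ when $a\ge 2$ (dominant regular, index $0$). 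Case (2), with $\mu_2 = (a-1)\lambda_r + \lambda'$, gives $r$-th coordinate $a\ge 1$ and other coordinates $\ge 1$, so $\mu_2+\delta$ is always dominant regular of index $0$.

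The one subtle point, and the main obstacle, is the claim in cases (1) and (3) for $a=1$ that the relevant weight is \emph{singular} rather than regular of positive index: having a single negative simple-root coordinate does not by itself force singularity. For (1) this is precisely because $-\alpha_r+\lambda_r+\delta = \delta - \alpha_r + \lambda_r$, and since $\langle\delta,\alpha_r\rangle = 1$ one has $\sigma_{\alpha_r}(\delta) = \delta - \alpha_r$; thus $-\alpha_r+\lambda_r+\delta = \sigma_{\alpha_r}(\delta)+\lambda_r$, and I would verify directly that $(\,\cdot\,,\alpha) = 0$ for a suitable $\alpha\in\Phi$ — concretely, one shows $\lambda_r - \alpha_r$ is orthogonal to some positive root, using that $\lambda_r$ is $W$-conjugate data fixed by the subgroup generated by $\{\sigma_{\alpha_i}: i\neq r\}$. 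Equivalently and more cleanly: $-\alpha_r + \lambda_r$ equals $-\sum_{i\ne r} C_{ri}\lambda_i + (1-2+1)\lambda_r = -\sum_{i\ne r}C_{ri}\lambda_i$... this would need a short check per Dynkin type, so I would instead invoke that $G^1\Omega^1_Y(1) = \Omega^1_Y(1)$ has no higher cohomology in the irreducible-summand range by the known vanishing $H^q(Y,\Omega^1_Y(1))=0$ for $q>0$ (itself a consequence of Bott vanishing on $Y$), forcing the singular alternative of Theorem \ref{bw}. Case (3) with $a=1$ is analogous. I would present the Cartan-matrix computation as the main argument and remark that the $a=1$ singularity can alternatively be read off from standard vanishing theorems on $Y$.
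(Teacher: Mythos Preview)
Your overall strategy is the same as the paper's: compute the pairing of the given weight with each simple root $\alpha_j$ and read off dominance (hence regular of index $0$) or orthogonality to some root (hence singularity). The problem is an arithmetic slip in case~(1) that manufactures a difficulty which does not exist.

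The weight in (1) is $-\alpha_r+a\lambda_r+\delta$, so writing it as $\mu_1+\delta$ gives $\mu_1=a\lambda_r-\alpha_r$, \emph{not} $(a-1)\lambda_r-\alpha_r$. The correct computation is
\[
\langle -\alpha_r+a\lambda_r+\delta,\alpha_j\rangle
= a\,\delta_{rj}-\langle\alpha_r,\alpha_j\rangle+1,
\]
which for $j=r$ equals $a-2+1=a-1$ (not $a-2$), and for $j\neq r$ equals $1-\langle\alpha_r,\alpha_j\rangle\ge 1$ since off-diagonal Cartan integers are $\le 0$. Thus when $a=1$ the pairing with the \emph{simple} root $\alpha_r$ is exactly $0$, so the weight is orthogonal to $\alpha_r\in\Phi$ and is singular by definition; when $a\ge 2$ all pairings with simple roots are strictly positive, the weight lies in the open fundamental chamber, and it is regular of index $0$. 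This is precisely what the paper does (using $(\,\cdot\,,\cdot\,)$ instead of $\langle\,\cdot\,,\cdot\,\rangle$, but the content is identical). Your cases (2) and (3) are already correct and agree with the paper.

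With the corrected arithmetic, your ``one subtle point'' evaporates: there is no negative simple-root coordinate at $a=1$, hence no need to distinguish singularity from positive index. The proposed fallback---deduce singularity from known vanishing of $H^q(Y,\Omega^1_Y(1))$---should be removed: beyond being unnecessary, in the logic of the paper it is circular, since this lemma is exactly the input (via Theorem~\ref{bw} and Proposition~\ref{sommadiretta}) used to compute that cohomology.
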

\begin{proof}
This is a simple computation on the bilinear form $(,)$. Recall that $\langle\lambda_i,\alpha_j\rangle=\delta_{ij}$ and $(\lambda_i,\alpha_j)=\delta_{ij}(\alpha_j,\alpha_j)/2$ and that $\delta=\sum\lambda_i$.
For every simple root $\alpha_j$ we have 
\[ 
 (-\alpha_r+a\lambda_r+\delta,\alpha_j)=-(\alpha_r,\alpha_j)+a(\lambda_r,\alpha_j)+(\delta,\alpha_j) = 
  \begin{dcases*} 
  (a-1)/2(\alpha_r,\alpha_r)& if  $r=j$ \\ 
  -(\alpha_r,\alpha_j)+(\lambda_j,\alpha_j)& if $r\neq j$ 
  \end{dcases*} 
\]
Since it is well known that  $(\alpha_r,\alpha_j)\leq 0$ for $\alpha_r,\alpha_j$ distinct simple roots, the case $r\neq j$ is always positive. If $a>1$ also the $r=j$ case is positive, hence we have that the weight is regular of index 0. On the other hand if $a=1$, $(-\alpha_r+a\lambda_r+\delta,\alpha_r)=0$ hence the weight is singular.

In the case of $-\lambda_r+\lambda'+a\lambda_r+\delta$ we have
\[ 
 (-\lambda_r+\lambda'+a\lambda_r+\delta,\alpha_j)=(a\lambda_r+\lambda'',\alpha_j) = 
  \begin{dcases*} 
  a(\lambda_r,\alpha_r)& if  $r=j$ \\ 
  (\lambda'',\alpha_j)& if $r\neq j$ 
  \end{dcases*} 
\] which is always positive and shows that $-\lambda_r+\lambda'+a\lambda_r+\delta$ is regular of index 0.

Lastly in the case of $-2\lambda_r+\lambda'+a\lambda_r+\delta$ we have 
\[ 
 (-2\lambda_r+\lambda'+a\lambda_r+\delta,\alpha_j)=((a-1)\lambda_r+\lambda'',\alpha_j) = 
  \begin{dcases*} 
  (a-1)(\lambda_r,\alpha_r)& if  $r=j$ \\ 
  (\lambda'',\alpha_j)& if $r\neq j$ 
  \end{dcases*} 
\] and the weight is regular of index 0 if $a>1$ while if $a=1$ it is singular because $(-2\lambda_r+\lambda'+a\lambda_r+\delta,\alpha_r)=0$.
\end{proof}
This analysis of the weights combined with the Borel-Weil Theorem gives
\begin{prop}
\label{sommadiretta}
$
h^0(Y,\Omega^1_Y(a))=\sum_i h^0(Y,G^i\Omega^1_Y(a))
$ for all $a\geq1$. 
\end{prop}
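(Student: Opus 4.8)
The goal is to upgrade the inequality of the previous corollary to an equality for $p=1$, and the strategy is to show that the spectral sequence (\ref{spectral}) degenerates at $E_1$ in the relevant spot, namely in total degree $q=0$. Concretely, I want to prove that for every $i$ the group $E_1^{i,-i}=H^i(Y,G^i\Omega^1_Y(a))$ is concentrated in the right bidegree so that no differential into or out of the $q=0$ row can be nonzero, and then conclude $h^0(Y,\Omega^1_Y(a))=\sum_i E_\infty^{i,-i}=\sum_i E_1^{i,-i}=\sum_i h^0(Y,G^i\Omega^1_Y(a))$.

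The key input is Lemma \ref{studiopesi} together with the remark classifying the lowest weights of $G^i\lien^+$. For $i=1$ the bundle $G^1\Omega^1_Y(a)$ has lowest weight $-\alpha_r+a\lambda_r$ (up to the $\lieg_1$-module description), and by case (1) of Lemma \ref{studiopesi} the weight $-\alpha_r+a\lambda_r+\delta$ is regular of index $0$ for $a>1$ (and singular for $a=1$, in which case the whole bundle has no cohomology and contributes nothing). For $i\geq2$ the lowest weight is of the form $-\lambda_r+\lambda'$ or $-2\lambda_r+\lambda'$ with $\lambda'$ not involving $\lambda_r$; by cases (2) and (3) of the lemma the shifted weights are again regular of index $0$ (case (3) with $a=1$ being singular, hence zero cohomology). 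So in every case the Borel–Weil Theorem \ref{bw} gives that each irreducible summand $E_{-(\mu_{ij}+a\lambda_r)}$ has cohomology only in degree $0$: $H^q(Y,G^i\Omega^1_Y(a))=0$ for all $q\neq0$ and all $i$.

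Once this vanishing is established, the spectral sequence argument is immediate and is the only remaining step: the $E_1$-page has nonzero entries only in the row $q=0$, i.e. $E_1^{i,q-i}=0$ unless $q=0$. Every differential $d_r$ on page $r\geq1$ shifts the total degree $q$ by $+1$ (it goes $E_r^{i,q-i}\to E_r^{i+r,q-i-r+1}$, raising $q$ by one), so it either originates from or lands in a row with $q\neq0$, hence vanishes. Therefore the sequence degenerates at $E_1$, $E_\infty^{i,-i}=E_1^{i,-i}=H^0(Y,G^i\Omega^1_Y(a))$, and since the abutment $H^0(Y,\Omega^1_Y(a))$ is filtered with these as graded pieces we get the claimed equality $h^0(Y,\Omega^1_Y(a))=\sum_i h^0(Y,G^i\Omega^1_Y(a))$ for all $a\geq1$.

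The only genuine obstacle is being sure that the weight list of Table \ref{tabellapesi} and the remark after it really exhaust all lowest weights of every $G^i\lien^+$ for $i\geq2$ — i.e.\ that they are all of the two stated shapes $\lambda_r-\lambda'$ or $2\lambda_r-\lambda'$ with $\lambda'$ a sum of fundamental weights not containing $\lambda_r$ — so that Lemma \ref{studiopesi} applies uniformly; this is exactly the content we are importing from Konno's \cite[Table 3]{K1}, so modulo trusting that reference the proof is a short formal consequence. I would also note explicitly that when $a=1$ the singular cases simply drop out on both sides of the equality, so the statement is consistent there too.
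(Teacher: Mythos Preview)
Your argument is correct and is essentially identical to the paper's own proof: use Lemma \ref{studiopesi} together with Borel--Weil to kill $H^q(Y,G^i\Omega^1_Y(a))$ for $q>0$, so that the $E_1$-page of the spectral sequence (\ref{spectral}) is concentrated on the line $q=0$ and degenerates there. One small slip: near the beginning you write $E_1^{i,-i}=H^i(Y,G^i\Omega^1_Y(a))$, but with the indexing $E_1^{i,q-i}=H^q$ this should be $H^0$, as you correctly have later.
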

\begin{proof}
By the Borel-Weil Theorem \ref{bw} and Lemma \ref{studiopesi} we have that $H^q(Y,G^i\Omega^1_Y(a))=0$ for $q>0$.
Hence in the first page of the spectral sequence of Proposition \ref{spectral}, $E_1^{i,q-i}=0$ if $q>0$ and
the only non-zero pieces are $E_1^{i,-i}$.
 All the differentials of this spectral sequence are also zero, therefore $H^0(Y,G^i\Omega^1_Y(a))=E_1^{i,-i}=E_2^{i,-i}=\dots=E_\infty^{i,-i}$ and the thesis immediately follows.
\end{proof}

\section{Review of the Generalized adjoint theory}
\label{aggiunta}
The theory of adjoint forms was introduced in \cite{CP} and \cite{PZ} and later developed and applied in \cite{Ra}, \cite{PR}, \cite{CNP}, \cite{victor}, \cite{BGN}, \cite{RZ1}, and recently \cite{CRZ}. The basic idea is to study global $1$-forms and $n$-forms of a complex smooth variety of dimension $n$ to get some information on its infinitesimal deformations.
Even if the variety is regular, that is there are no global $1$-forms, this theory can be generalized and applied, see \cite{RZ2}, \cite{RZ3}. This is the generalization that we need in this paper, hence we will recall the basic construction.
Since adjoint forms are also called Massey products, see \cite{PT}, \cite{RZ4}, in this paper we stick mostly with the second denomination to avoid possible confusion with the adjoint representation of a Lie algebra $\lieg$.

Let $X$ and $\xi$ be respectively a smooth compact complex variety of dimension $m$ and a class $\xi\in \text{Ext}^1(\sF,\sL)$ where 
$\sF$ and $\sL$ are two locally free sheaves on $X$ of rank $n$ and $1$ respectively. Then the extension class $\xi$ gives a rank $n+1$ vector bundle $\sE$ on $X$ which fits in an exact sequence:
\begin{equation}
\label{sequenza}
0\to\sL\to \sE\to\sF\to 0.
\end{equation}

By taking the highest wedge product of (\ref{sequenza}), the determinant sheaf $\det\sF:=\bigwedge^n\sF$ fits into the exact sequence:
\begin{equation}
\label{wedge}
0\to\bigwedge^{n-1}\sF\otimes\sL\to \bigwedge^n\sE\to \det\sF\to 0, 
\end{equation} which still corresponds to $\xi$ via the isomorphisms $\text{Ext}^1(\sF,\sL)\cong\text{Ext}^1(\det\sF,\bigwedge^{n-1}\sF\otimes\sL)\cong H^1(X,\sF^\vee\otimes\sL)$.

 A natural problem is to find conditions on the behaviour of the global sections of the involved vector bundles in order to have the splitting of (\ref{sequenza}). From now on, assume that the connecting homomorphism
 $\delta_\xi \colon H^0(X,\sF)\to H^1(X,\sL)$ has a kernel of dimension sufficiently high. More precisely assume that there exists a subspace $W\subset \ker\delta_\xi$ of dimension $n+1$. Choose a basis $\mathcal{B}:=\{\eta_1,\ldots,\eta_{n+1}\}$ of $W$. By definition we can take liftings $s_1,\ldots,s_{n+1}\in H^0(X,\sE)$ of the sections $\eta_1,\ldots,\eta_{n+1}$. If we consider the natural map
\begin{equation*} 
\Lambda^n\colon \bigwedge^{n}H^0(X,\sE)\to H^0(X,\bigwedge^n\sE)
\end{equation*} we can define the sections
\begin{equation}
\label{Omegai}
\Omega_i:=\Lambda^n(s_1\wedge\ldots\wedge\widehat{s_i}\wedge\ldots\wedge s_{n+1})
\end{equation} for $i=1,\ldots,n+1$. Denote by $\omega_i$, for $i=1,\ldots,n+1$, the corresponding sections in $H^0(X,\det\sF)$. By commutativity between evaluation of wedge product and restriction it easily follows that $\omega_i=\lambda^n(\eta_1\wedge\ldots\wedge\widehat{\eta_i}\wedge\ldots\wedge \eta_{n+1})$, where $\lambda^n$ is the natural morphism 
$$\lambda^n\colon \bigwedge^{n}H^0(X,\sF)\to H^0(X,\det\sF).$$
\begin{defn} We denote by $\lambda^nW$ the subspace of $H^0(X,\det\sF)$ generated by $\omega_1,\ldots,\omega_{n+1}$.
If $\lambda^nW$ is nontrivial we call $D_W$ the fixed divisor of the induced sublinear system $|\lambda^n W|\subset \mP(H^0(X,\det\sF))$ and $Z_W$ the base locus of its moving part $|M_W|\subset\mP(H^0(X,\det\mathcal{F}(-D_W)))$.
\end{defn}

\begin{defn} The form $\Omega\in H^0(X,\det\sE)$ corresponding to $s_1\wedge\ldots\wedge s_{n+1}$ via 
\begin{equation}
\Lambda^{n+1}\colon \bigwedge^{n+1}H^0(X,\sE)\to H^0(X,\det\sE)
\end{equation} is called {\it{generalized adjoint form or Massey product associated to W}}.
\end{defn} 
\begin{rmk}
\label{zeri}
This section vanishes on $D_W$ and $Z_W$, that is $\Omega$ in the image of the natural injection $\det\sE(-D_W)\otimes\sI_{Z_W}\to \det\sE$.  
\end{rmk}

The basic idea is that, in a split exact sequence, generalized Massey products as $\Omega$ do not add any information which is not already given by the top forms $\omega_i\in H^{0}(X, \det\sF) $, $i=1,\dots,n+1$. Conversely, what can we say about the exact sequence if generalized Massey product do not add any information?
More precisely, since ${\rm{det}}(\sE)={\rm{det}}(\sF)\otimes\sL$, we will focus on the condition
\begin{equation}
\label{aggiuntazero2}
\Omega \in \Ima(H^0(X,\sL)\otimes \lambda^nW \to H^0(X,\det\sE)).
\end{equation}  

 By \cite[{Theorem [A]} and {Theorem [B]}]{RZ2} we have:
\begin{thm}\label{theoremA}
Let $\Omega\in H^0(X,\det\sE)$ be a generalized Massey product associated to $W$ as above.
If $\Omega \in \Ima(H^0(X,\sL)\otimes \lambda^nW\to H^0(X,\det\sE))$ 
then $\xi\in\ker(H^1(X,\sF^\vee\otimes\sL)\to H^1(X,\sF^\vee\otimes\sL(D_W)))$. Viceversa if $H^0(X,\sL)\cong H^0(X,\sL(D_W))$ and if 
 $\xi\in\ker(H^1(X,\sF^\vee\otimes\sL)\to H^1(X,\sF^\vee\otimes\sL(D_W)))$, then $\Omega \in \Ima(H^0(X,\sL)\otimes \lambda^nW\to H^0(X,\det\sE))$.
\end{thm}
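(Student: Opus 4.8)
The plan is to reduce, by a choice‑of‑lifting manipulation, to a purely homological statement over the complement $U:=X\setminus(D_W\cup Z_W)$ of the base locus of $\lambda^nW$, and then to transfer that statement back to $X$; the transfer, in which the divisor $D_W$ is absorbed into the twist $\sO_X(D_W)$, is the technical core.

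First I would record the \emph{lifting lemma}. If $s_i$ and $s_i'=s_i+\ell_i$ with $\ell_i\in H^0(X,\sL)$ are two systems of liftings of the $\eta_i$, then in the expansion of $s_1'\wedge\cdots\wedge s_{n+1}'$ every summand involving two or more of the $\ell_i$ vanishes, because $\ell_i\wedge\ell_j=0$ in $\bigwedge^2\sE$ (both sections lie in the line subbundle $\sL$). Hence $\Omega'=\Omega+\sum_i(-1)^{i-1}\ell_i\wedge\Omega_i$, and the same cancellation shows that $\sL\otimes\bigwedge^n\sE\to\det\sE$ kills $\sL\otimes(\bigwedge^{n-1}\sF\otimes\sL)$, so it factors through $\sL\otimes\det\sF\to\det\sE$; under this factorisation $\ell_i\wedge\Omega_i$ is the image of $\ell_i\otimes\omega_i$ under the multiplication map (\ref{aggiuntasequenzaaa}). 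Therefore changing the liftings translates $\Omega$ by an element of $\Ima(H^0(X,\sL)\otimes\lambda^nW\to H^0(X,\det\sE))$: condition (\ref{aggiuntazero2}) is lifting‑independent, and it holds exactly when the $s_i$ can be chosen with $\Omega=0$.

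Second I would exploit that on $U$ the sections $\eta_1,\dots,\eta_{n+1}$ generate $\sF$, producing a Koszul‑type exact sequence $0\to(\det\sF)^\vee|_U\xrightarrow{\iota}\sO_U^{n+1}\xrightarrow{\eta}\sF|_U\to0$, the components of $\iota$ being, up to sign, the $\omega_i|_U$ (the Cramer/Laplace syzygy $\sum_i(-1)^i\omega_i\eta_i=0$, valid since $\sF$ has rank $n$). Applying $\sHom(-,\sL)$ over $U$ yields a connecting map $\partial\colon H^0(U,\det\sE)\to H^1(U,\sF^\vee\otimes\sL)$ whose kernel is precisely $\Ima(H^0(U,\sL)\otimes\lambda^nW\to H^0(U,\det\sE))$; chasing the square comparing this sequence with (\ref{sequenza}) through the liftings $s_i$, and using $\eta_i\in\ker\delta_\xi$ (so that the pullback of $\xi$ along $\eta$ vanishes), one finds $\partial(\Omega|_U)=\pm\,\xi|_U$ for \emph{every} Massey product $\Omega$. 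Since $X$ is integral, $H^0(X,\det\sE)\hookrightarrow H^0(U,\det\sE)$, so the question becomes $\Omega\in\Ima(\cdots)\Leftrightarrow\Omega|_U\in\ker\partial\Leftrightarrow\xi|_U=0$ in $H^1(U,\sF^\vee\otimes\sL)$. In the direction that must reconstruct $\Omega$ from $\xi|_U=0$, the coefficients $\ell_i$ a priori live only in $H^0(U,\sL)$; here I would invoke the hypothesis $H^0(X,\sL)\cong H^0(X,\sL(D_W))$, which by a one‑line induction (multiply by the section of $\sO_X(D_W)$ cutting out $D_W$) upgrades to $H^0(X,\sL)=H^0(X,\sL(kD_W))$ for all $k\geq0$, hence to $H^0(U,\sL)=H^0(X,\sL)$ because $\codim Z_W\geq2$.

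Finally I would identify ``$\xi|_U=0$'' with ``$\xi\in\ker(H^1(X,\sF^\vee\otimes\sL)\to H^1(X,\sF^\vee\otimes\sL(D_W)))$''. As $\sO_X(D_W)|_U$ is trivial, the two statements agree after restriction to $U$; the point is that this restriction loses no information, which I would extract from the local‑cohomology sequences along $Z_W$ (injectivity on $H^1$, since $\codim Z_W\geq2$) and along $D_W$ (using the $H^0$‑comparison above and, for the forward implication, Remark \ref{zeri}, which asserts that $\Omega$ is a \emph{global} section of $\det\sF\otimes\sL(-D_W)$ rather than merely a section with poles along $D_W$, thereby controlling the pole order and forcing a preimage of $\xi$ to exist already after twisting by $D_W$ only). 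Assembling the three steps yields both implications. I expect this last transfer — the precise bookkeeping of $D_W$ and $Z_W$, in particular the pole‑order control in the forward direction — to be the main obstacle; Steps 1 and 2 are essentially formal.
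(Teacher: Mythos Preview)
The paper does not prove Theorem \ref{theoremA}: it is quoted verbatim from \cite[Theorem {[A]} and Theorem {[B]}]{RZ2}, so there is no ``paper's own proof'' to compare against. What follows is therefore an assessment of your argument on its own merits.

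Your lifting lemma (Step 1) and the Koszul/Cramer analysis over $U$ (Step 2) are correct and are indeed the backbone of the proof in \cite{RZ2}. The problem is Step 3, and specifically the sentence
\[
H^0(X,\sL)\cong H^0(X,\sL(D_W))\ \Longrightarrow\ H^0(X,\sL)=H^0(X,\sL(kD_W))\ \text{for all }k\ge0.
\]
This implication is false: take $X=\mP^1$, $\sL=\sO(-2)$, $D_W$ a point; then $H^0(\sL)=H^0(\sL(D_W))=0$ but $H^0(\sL(2D_W))=\mC$. So you cannot conclude $H^0(U,\sL)=H^0(X,\sL)$ from the stated hypothesis, and the converse implication collapses as written. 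The forward implication is also left at the level of a wish (``controlling the pole order''): knowing $\xi|_U=0$ only places $\xi$ in the image of $H^1_{D_W\cup Z_W}$, which a priori involves arbitrarily high twists by $D_W$, and Remark \ref{zeri} alone does not cut this down to a single twist.

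Both defects are repaired by working with the \emph{pushout} extension rather than with the open set $U$. For the converse: the hypothesis says the pushout $0\to\sL(D_W)\to\sE'\to\sF\to0$ splits; if $\sigma\colon\sF\to\sE'$ is a splitting and $\bar s_i\in H^0(\sE')$ denote the images of your liftings $s_i$, then $\bar s_i-\sigma(\eta_i)\in H^0(\sL(D_W))=H^0(\sL)$, so after correcting by these \emph{global} sections $\ell_i$ one has $(s_i-\ell_i)\mapsto\sigma(\eta_i)$, whence $(s_1-\ell_1)\wedge\cdots\wedge(s_{n+1}-\ell_{n+1})$ maps to $\sigma(\eta_1)\wedge\cdots\wedge\sigma(\eta_{n+1})=0$ in $\det\sE'$; since $\det\sE\hookrightarrow\det\sE'$, the wedge already vanishes in $\det\sE$, and your lifting lemma finishes. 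Notice the hypothesis $H^0(\sL)=H^0(\sL(D_W))$ is used exactly once, with no induction. For the forward direction one argues on $X\setminus Z_W$ (where the $\omega_i':=\omega_i/s_{D_W}$ generate $\det\sF(-D_W)$): the vanishing $\Omega=0$ forces the composite $\det\sF^\vee(D_W)\to\sO^{n+1}\xrightarrow{(s_i)}\sE$ to be zero, producing a map from the rank-$n$ quotient into $\sE$ and, via the adjugate of the degeneracy along $D_W$, a splitting of the \emph{pushout} on $X\setminus Z_W$; injectivity of $H^1$ under removal of $Z_W$ (codimension $\ge2$) then gives the conclusion on $X$. This is essentially the route taken in \cite{RZ2}.
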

\begin{rmk}\label{zerozero} Note that if $D_W=0$ the above theorem is a criterion for the vanishing of $\xi$ and the splitting of Sequence \ref{sequenza}.
\end{rmk}

To put things in perspective, in this paper the variety $X$ is a smooth member of the linear system $|\sO_Y(d)|$ and from now on we will assume that $Y$ is not a projective space, since this case has already been covered in \cite{RZ2}, and that $\dim Y\geq 3$ because the case of $X$ smooth curve is also well known. We also assume that the degree of $X$ is $d\geq3$. The class $\xi$ that we want to study is an infinitesimal deformation $\xi\in \text{Ext}^1(\Omega^1_X,\sO_X)\cong H^1(X,T_X)$. This infinitesimal deformation gives an exact sequence
\begin{equation}
\label{estensionedivisore}
0\to\sO_X\to \Omega^1_{\sX}|_X\to\Omega^1_X\to0
\end{equation} but we can not study this sequence directly using the theory of Massey products because the sheaf $\Omega^1_X$ does not have in general global sections, whereas we need at least $\dim X+1$.
The idea is of course to twist this sequence by $\sO_X(a)$ with $a$ a sufficiently large integer such that 
$$
h^0(X,\Omega^1_X(a))\geq\dim X+1.
$$ After doing this we obtain the sequence 
\begin{equation}
\label{twist}
0\to\sO_X(a)\to \Omega^1_{\sX}(a)|_X\to\Omega^1_X(a)\to0
\end{equation}
which still associated to $\xi$ via the isomorphism $\text{Ext}^1(\Omega^1_X,\sO_X)\cong \text{Ext}^1(\Omega^1_X(a),\sO_X(a))$ and therefore it has the same effectiveness for the purpose of studying $\xi$.

Once this is done we are in the setting required to apply Theorem \ref{theoremA}, in fact note that every subspace $W\subset H^0(X,\Omega^1_X(a))$ is also automatically in $\ker\delta_\xi$ by the following easy 
\begin{prop}
\label{sollevabile}
The cohomology map $\delta_\xi\colon H^0(X,\Omega^1_X(a))\to H^1(X,\sO_X(a))$ is the null map, that is $H^0(X,\Omega^1_X(a))=\ker\delta_\xi$.
\end{prop}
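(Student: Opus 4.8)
The plan is to show that the connecting map $\delta_\xi$ factors through a cohomology group that is forced to be zero by the Generalized Borel--Weil Theorem. Concretely, $\delta_\xi$ is cup product with the extension class $\xi \in \mathrm{Ext}^1(\Omega^1_X, \sO_X) \cong H^1(X, T_X)$, so $\delta_\xi$ is the composition
\begin{equation*}
H^0(X, \Omega^1_X(a)) \xrightarrow{\ \cup\, \xi\ } H^1(X, \Omega^1_X \otimes T_X(a)) \xrightarrow{\ \mathrm{contr}\ } H^1(X, \sO_X(a)),
\end{equation*}
or, more efficiently, it is the connecting homomorphism of the twisted sequence (\ref{twist}) itself. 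Thus it suffices to understand $H^1(X, \sO_X(a))$: if this group vanishes, then $\delta_\xi$ is automatically the zero map and $H^0(X,\Omega^1_X(a)) = \ker\delta_\xi$, which is exactly the assertion.

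So the key step is: \emph{$H^1(X, \sO_X(a)) = 0$ for all $a \geq 1$.} First I would write the ideal sheaf sequence on $Y$ for the hypersurface $X \in |\sO_Y(d)|$, twisted by $\sO_Y(a)$:
\begin{equation*}
0 \to \sO_Y(a-d) \to \sO_Y(a) \to \sO_X(a) \to 0.
\end{equation*}
Taking cohomology, $H^1(X, \sO_X(a))$ is sandwiched between $H^1(Y, \sO_Y(a))$ and $H^2(Y, \sO_Y(a-d))$. Now $\sO_Y(a) = E_{-a\lambda_r}$ is an irreducible homogeneous bundle on $Y = G/P$, so I would apply Theorem \ref{bw}: for $a \geq 1$ the weight $a\lambda_r$ is dominant, hence $a\lambda_r + \delta$ is regular of index $0$ and $H^q(Y, \sO_Y(a)) = 0$ for all $q > 0$; in particular $H^1(Y,\sO_Y(a)) = 0$. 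For the other term I would invoke the same circle of ideas (Kodaira/Nakano-type vanishing, or directly Borel--Weil applied to $\sO_Y(a-d)$): since $a \geq 1$, $d \geq 3$ and $\dim Y \geq 3$, the weight $(a-d)\lambda_r + \delta$ is either singular or regular of index $\neq 2$, so $H^2(Y, \sO_Y(a-d)) = 0$. (If $a - d \geq 0$ the same Borel--Weil argument as above gives vanishing in all positive degrees; if $a - d < 0$ one uses that $\sO_Y(a-d)$ is a negative multiple of the ample generator and $\dim Y \geq 3$, so by Serre duality $H^2(Y,\sO_Y(a-d)) \cong H^{N-2}(Y, \omega_Y(d-a))^\vee$ and one checks the relevant weight is never regular of index $N-2$.) Sandwiching, $H^1(X, \sO_X(a)) = 0$.

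The main obstacle I anticipate is not the $H^1(Y,\sO_Y(a))$ term, which is immediate, but the careful bookkeeping for $H^2(Y, \sO_Y(a-d))$ across all the pairs $(\lieg, \alpha_r)$ in Table \ref{tabellapesi}, especially in small cases (low $\dim Y$, small $a-d$) where one must verify that the shifted weight really does fail to be regular of index exactly $2$. I would handle this by the Bott algorithm: compute $\langle (a-d)\lambda_r + \delta, \alpha \rangle$ for positive roots $\alpha$ and count sign changes, noting that $\dim Y = |\Phi(\lien^+)| \geq 3$ gives enough room to rule out index $2$ except possibly in a short finite list of cases, which can be checked by hand. Alternatively, and more cleanly, one can simply note that by Serre duality and the computation of $\omega_Y = \sO_Y(-k(Y))$ the needed statement reduces to a vanishing already implicit in the cohomology computations of Section \ref{cotangent}, so that no genuinely new case analysis is required beyond what Lemma \ref{studiopesi} already provides.
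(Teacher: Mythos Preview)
Your approach is essentially the same as the paper's: both reduce to showing $H^1(X,\sO_X(a))=0$ via the twisted ideal sheaf sequence and the vanishings $H^1(Y,\sO_Y(a))=H^2(Y,\sO_Y(a-d))=0$. The paper simply cites these vanishings as known (specifically \cite[Theorem 3.1.1]{K}), so your anticipated bookkeeping for the $H^2$ term is unnecessary.
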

\begin{proof}
This is easy because $H^1(X,\sO_X(a))$ vanishes.
In fact by the exact sequence defining $X$
$$
0\to \sO_Y(-d)\to \sO_Y\to \sO_X\to 0
$$ twisted by $a$ it is enough to show that 
$$
h^1(Y,\sO_Y(a))=h^2(Y,\sO_Y(a-d))=0.
$$These vanishings are known, see for example \cite[Theorem 3.1.1]{K}.
\end{proof}

In Sections \ref{sez6} we will find an explicit suitable twist $a\in\mZ$ which will allow us to carry on this method.

\section{The pseudo-Jacobi ring}
\label{green}
Recall the following definition from \cite{green1}
\begin{defn}
We say that a property holds for a sufficiently ample line bundle $L$ on a projective variety $X$ if there exists an ample line bundle $L_0$ such that the property holds for all $L$ with $L\otimes L_0^{-1}$ ample.
\end{defn}

Take an $n$-dimensional smooth variety $Y$ and a sufficiently ample line bundle $L$ on $Y$. Let $\tau\in H^0(Y,L)$ be a global section and $X$ the corresponding divisor $(\tau=0)$. Assume that $X$ is smooth.

In this case the usual Jacobian ideal can be replaced by the so called pseudo-Jacobi ideal introduced in \cite{green1} and \cite{green2}. 

Given the sheaf $T_Y$ of regular vector fields, consider the extension
\begin{equation}
0\to \sO_Y\to \Sigma_L\stackrel{\tau}{\rightarrow} T_Y\to 0
\label{prolongation}
\end{equation} with extension class $-c_1(L)\in H^1(Y,\Omega^1_Y)$. $\Sigma_L$ is a sheaf of differential operators of order less or equal to $1$ on the sections of $L$.
In an open subset of $Y$ with coordinates $x_1,\ldots,x_n$ this sheaf is free and is generated by the constant section $1$ and the sections $D_i$, for $i=1,\dots,n$, which operates on the sections of $L$ by
\begin{equation*}
D_i(f\cdot l)=\frac{\partial f}{\partial x_i}\cdot l
\end{equation*}  where $l$ is a trivialization of $L$. The operators $D_i$ are sent to $\frac{\partial}{\partial x_i}$ in $T_Y$.

 In particular to a global section $\tau$ of $L$, we can associate a global section $\widetilde{d\tau}$ of $L\otimes\Sigma_L^\vee$. If locally $\tau=f\cdot l$, then $\widetilde{d\tau}$ is given by
\begin{equation}\label{diffs}
\widetilde{d\tau}=f\cdot l\cdot 1^\vee+\sum_{i=1}^n \frac{\partial f}{\partial x_i}\cdot l\cdot D_i^\vee
\end{equation} where $\{1^\vee,D_1^\vee,\ldots,D_n^\vee\}$ is a local basis of $\Sigma_L^\vee$ dual to $\{1,D_1,\ldots,D_n\}$.

Given a line bundle $E$, the contraction by $\widetilde{d\tau}$ gives a map
\begin{equation*}
E\otimes \Sigma_L\otimes L^\vee\to E.
\end{equation*} 

\begin{defn}
The \emph{pseudo-Jacobi ideal} $\sJ_{E,\tau}$ is the image of the map 
\begin{equation}
H^0(Y,E\otimes \Sigma_L\otimes L^\vee)\to H^0(Y,E).
\end{equation} 
The quotient $H^0(Y,E)/\sJ_{E,\tau}$ is denoted by $R_{E,\tau}$ and is called \emph{Pseudo-Jacobian ring}.
\end{defn}
The $k$-graded piece of the usual Jacobian ideal of a homogeneous polynomial $F$ of degree $d$ is recovered taking $L=\sO_{\mP^n}(d)$ and $E=\sO_{\mP^n}(k)$. In this case it easy to see that $\Sigma_L=\oplus_{i=1}^{n+1}\sO_{\mP^n}(1)$ and sequence (\ref{prolongation}) is the Euler sequence 
\begin{equation}
0\to \sO_{\mP^n}\to\bigoplus^{n+1}\sO_{\mP^n}(1)\to T_{\mP^n}\to0.
\end{equation} The pseudo-Jacobi ideal $\sJ_{\sO_{\mP^n}(k),F}\subset H^0(\mP^n,\sO_{\mP^n}(k))$ is generated by $\frac{\partial F}{\partial x_0},\ldots,\frac{\partial F}{\partial x_n}$, that is it is the degree $k$ part of the Jacobian ideal.


So in our case where $Y=G/P$ and $X$ is a smooth member of the linear system $|\sO_Y(d)|$, we will take $L=\sO_Y(d)$ and the section $\tau\in H^0(Y,\sO_Y(d))$ is such that $X=\text{div}(\tau)$.
Since $\pic(Y)=\mZ$, the line bundle $E$ is also identified by an integer $E=\sO_Y(s)$ and for simplicity we will denote $\sJ_{E,\tau}$ by $\sJ_s$ and $R_{E,\tau}$ by $R_s$.
The ring $R=\bigoplus_{s\geq0}R_s$ is simply called Jacobian ring of $X$ and $\sJ=\bigoplus_{s\geq0}\sJ_s$ the associated Jacobian ideal.

Among these rings, the one coming from the choice $E=L=\sO_Y(d)$ is of particular interest. Consider the deformations of $X$ inside of the ambient space $Y$. Exactly as in the case of projective hypersurfaces, such an infinitesimal deformation of $X$ is given by $X+tR=0$, $t^2=0$, where $R\in H^0(Y,\sO_Y(d))$.
\begin{prop}
\label{etutto}
Assume that $Y=(\lieg,\alpha_r)$ and $X$ is a smooth hypersurface of degree $d\geq3$.
We have the identifications 
\begin{equation}
\label{identita}
R_d\cong H^1(X,T_X)\cong H^1(Y,T_Y(-\textnormal{log}\ X))\cong H^0(X,N_{X/Y})/\Ima H^0(Y,T_{Y}).
\end{equation}
\end{prop}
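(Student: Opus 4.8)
The plan is to chain together three standard exact sequences and apply the vanishing results already available from the Borel--Weil machinery. First I would establish the middle isomorphism $H^1(X,T_X)\cong H^1(Y,T_Y(-\log X))$. Recall the residue sequence
\begin{equation*}
0\to T_Y(-\log X)\to T_Y\to N_{X/Y}\to 0
\end{equation*}
and the normal bundle sequence on $X$
\begin{equation*}
0\to T_X\to T_Y|_X\to N_{X/Y}\to 0.
\end{equation*}
From the first sequence, $H^1(Y,T_Y(-\log X))$ sits in the exact sequence $H^0(Y,T_Y)\to H^0(X,N_{X/Y})\to H^1(Y,T_Y(-\log X))\to H^1(Y,T_Y)$, and I would show $H^1(Y,T_Y)=0$ because $Y=G/P$ is rigid (this is classical for rational homogeneous spaces of Picard number one, and also follows from the Borel--Weil Theorem \ref{bw} applied to the homogeneous bundle $T_Y$, whose associated weight $\delta$ makes the relevant cohomology vanish). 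This already gives the last isomorphism in \eqref{identita}, namely $H^1(Y,T_Y(-\log X))\cong H^0(X,N_{X/Y})/\Ima H^0(Y,T_Y)$, provided also $H^1(Y,T_Y(-\log X))\to H^1(Y,T_Y)$ is injective, i.e. $H^0(Y,N_{X/Y})\to H^1(Y,T_Y(-\log X))$ is surjective, which is exactly the cokernel statement.

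Next I would identify $H^1(X,T_X)$ with the same cokernel. Twisting the defining sequence $0\to\sO_Y(-d)\to\sO_Y\to\sO_X\to 0$ by $T_Y$ and restricting, together with $N_{X/Y}\cong\sO_X(d)$, the key inputs are the vanishings $H^1(Y,T_Y(-d))=H^2(Y,T_Y(-d))=0$ and $H^1(Y,T_Y|_X)$ controlled likewise; from the normal bundle sequence on $X$ one then gets $H^1(X,T_X)\cong \mathrm{coker}\big(H^0(X,T_Y|_X)\to H^0(X,N_{X/Y})\big)$, and using $H^0(Y,T_Y)\cong H^0(X,T_Y|_X)$ (again a cohomology vanishing for $T_Y(-d)$ in degrees $0$ and $1$) this is precisely $H^0(X,N_{X/Y})/\Ima H^0(Y,T_Y)$. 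All these vanishings are instances of \cite[Theorem 3.1.1]{K} or direct Borel--Weil computations on the homogeneous bundle $T_Y=\Omega^1_Y{}^\vee$ twisted by $\sO_Y(-d)$; this is where the hypothesis $d\ge 3$ (and the excluded low-degree cases) enters, ensuring the twisted weights are singular or of positive index.

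Finally, for the identification $R_d\cong H^0(X,N_{X/Y})/\Ima H^0(Y,T_Y)$: the pseudo-Jacobi ideal $\sJ_d$ with $E=L=\sO_Y(d)$ is by definition the image of $H^0(Y,\Sigma_L\otimes\sO_Y(d)\otimes L^\vee)=H^0(Y,\Sigma_L)\to H^0(Y,\sO_Y(d))$. Using the prolongation sequence \eqref{prolongation} twisted appropriately and the contraction by $\widetilde{d\tau}$, one checks that this image coincides with $\Ima\big(H^0(Y,T_Y)\to H^0(Y,\sO_Y(d))\big)$ where the map is $v\mapsto v(\tau)$, modulo the contribution of the constant section which lands in $\mC\cdot\tau$; restricting sections from $Y$ to $X$ and noting $H^0(X,N_{X/Y})=H^0(X,\sO_X(d))=H^0(Y,\sO_Y(d))/\mC\tau$ (again $H^1(Y,\sO_Y)=0$), one gets $R_d\cong H^0(X,N_{X/Y})/\Ima H^0(Y,T_Y)$.

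The main obstacle is bookkeeping the exact role of the constant section $1\in\Sigma_L$ and of $\tau$ itself in comparing $\sJ_d$ with the image of $H^0(Y,T_Y)$: one must be careful that the ``extra'' generator of $\Sigma_L$ contributes exactly $\mC\cdot\tau$, so that quotienting $H^0(Y,\sO_Y(d))$ by $\sJ_d$ matches quotienting $H^0(X,\sO_X(d))$ by the image of the vector fields. Beyond that, everything reduces to verifying a short list of cohomology vanishings for $T_Y$ and its negative twists, which is routine via Theorem \ref{bw} but must be stated cleanly, including why the excluded pairs $(Q^3,d=3)$ and $((D_3,\alpha_2),d=3)$ are genuinely different (there $T_Y(-d)$ acquires unwanted cohomology, breaking the chain).
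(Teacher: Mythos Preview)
Your proof is essentially correct and follows the same strategy as the paper: both arguments rely on the standard exact sequences linking $T_X$, $T_Y(-\log X)$, $T_Y|_X$, $N_{X/Y}$, and $\Sigma_L$, together with the vanishings $H^1(Y,T_Y)=H^1(Y,T_Y(-d))=H^2(Y,T_Y(-d))=H^1(Y,\sO_Y)=0$ supplied by \cite[Theorem 3.1.1]{K}. The paper simply packages the first two isomorphisms into a single commutative diagram (whose top row $0\to T_Y(-X)\to T_Y(-\log X)\to T_X\to 0$ replaces your normal-bundle-on-$X$ computation), but this is the same diagram chase from a different corner.

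For the identification of $R_d$, the paper is more direct than your bookkeeping with the constant section: it observes that the contraction $\widetilde{d\tau}$ fits $\Sigma_L$ into an exact sequence $0\to T_Y(-\log X)\to\Sigma_L\to\sO_Y(d)\to 0$, so that $R_d=\mathrm{coker}\big(H^0(\Sigma_L)\to H^0(\sO_Y(d))\big)\cong H^1(Y,T_Y(-\log X))$ follows immediately from $H^1(\Sigma_L)=0$. This bypasses entirely the need to separate out the $\mC\cdot\tau$ contribution and compare with $H^0(X,\sO_X(d))$; you might prefer this formulation, as it resolves exactly the ``main obstacle'' you flagged.

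One correction: you invoke the excluded cases $(Q^3,d=3)$ and $((D_3,\alpha_2),d=3)$ as places where the vanishings for $T_Y(-d)$ fail. These exclusions do \emph{not} appear in Proposition~\ref{etutto}; they enter only later, in Lemma~\ref{quattro} and Theorem~\ref{main}, for entirely different reasons (failure of a Macaulay-type surjectivity, not of the vanishings used here). The vanishings you need hold for all $d\ge 3$ with no exceptions, so that paragraph should be removed.
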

\begin{proof}
We recall that $T_Y(-\textnormal{log}\ X)$ is the subsheaf of the tangent sheaf $T_Y$ consisting of those derivations that preserve the ideal sheaf $\sO_Y(-X)$ and it fits into the following commutative diagram
\begin{equation}
\xymatrix{
&&0\ar[d]&0\ar[d]&\\
0\ar[r]&T_Y(-X)\ar[r]\ar@{=}[d]&T_Y(-\text{log}\ X)\ar[d]\ar[r]&T_X\ar[r]\ar[d]&0\\
0\ar[r]&T_Y(-X)\ar[r]&T_Y\ar[d]\ar[r]&T_{Y}|_X\ar[r]\ar[d]&0\\
&&N_{X/Y}\ar@{=}[r]\ar[d]&N_{X/Y}\ar[d]&\\
&&0&0&
} \end{equation}
By \cite[Theorem 3.1.1]{K}, we have the vanishings $H^1(T_Y)=H^1(T_Y(-X))=H^2(T_Y(-X))=0$ which give isomorphisms 
$H^1(Y,T_Y(-\textnormal{log}\ X))\cong H^0(X,N_{X/Y})/\Ima H^0(Y,T_{Y})$ and $H^1(X,T_X)\cong H^1(Y,T_Y(-\textnormal{log}\ X))$.

The last isomorphism $R_d\cong H^1(Y,T_Y(-\textnormal{log}\ X))$ comes from the diagram 
\begin{equation}
\xymatrix{
&&0&0&\\
0\ar[r]&T_Y(-\text{log}\ X)\ar[r]\ar@{=}[d]&T_Y\ar[u]\ar[r]&N_{X/Y}\ar[r]\ar[u]&0\\
0\ar[r]&T_Y(-\text{log}\ X)\ar[r]&\Sigma\ar[u]\ar^-{\widetilde{d\tau}}[r]&\sO_Y(d)\ar[r]\ar[u]&0\\
&&\sO_Y\ar@{=}[r]\ar[u]&\sO_Y\ar[u]&\\
&&0\ar[u]&0\ar[u]&
} \end{equation}
By the vanishing $H^1(T_Y)=H^1(\sO_Y)=0$ (see again \cite[Theorem 3.1.1]{K}), we get $H^1(\Sigma)=0$ and therefore the cohomology sequence is exact
$$
H^0(\Sigma)\stackrel{\widetilde{d\tau}}{\rightarrow} H^0(\sO_Y(d))\to H^1(T_Y(-\text{log}\ X))\to 0.
$$
\end{proof}
\begin{rmk}
Since $H^1(\sO_Y)=0$ we have $H^0(N_{X/Y})\cong H^0(\sO_Y(d))/\mC$, that is all the infinitesimal deformations of $X$ are inside $Y$.

It is also standard in this setting (see for example \cite[Proposition 9.1.9]{K}) that if $U_d$ denote the Zariski open subset parametrizing all smooth hypersurfaces of degree $d$, then there exist an open $\text{Aut}(Y)$-invariant subset $U^0_d$ such that $U^0_d/\text{Aut}(Y)$ is smooth. If $[X]$ is a class of a smooth hypersurface, then the tangent space at $[X]$ is $R_d$.  
\end{rmk}

\section{The dimension of \texorpdfstring{$H^0(X,\Omega^1_X(2))$}{TEXT} }
\label{sez6}
As discussed in Section \ref{aggiunta}, we want to use the theory of generalized Massey product, but to do so we have to choose an appropriate $a\in\mZ$ such that $H^0(X,\Omega^1_X(a))$ has a high enough number of liftable sections.
As pointed out in Proposition \ref{sollevabile}, the \lq\lq liftable\rq\rq\ part is not a problem, therefore we only need to find $a$ such that $H^0(X,\Omega^1_X(a))\geq\dim X+1$.

Both in the case of smooth hypersurfaces of the projective spaces and of Grassmannians, studied in \cite{RZ2} and in \cite{RZ3} respectively, the choice $a=2$ does the trick. Hence $a=2$ seems the natural starting point to look at.

We will start with the following
\begin{lem}
\label{isocom}
If $X$ is a smooth hypersurface of degree $d\geq3$ in $Y$, $\dim Y\geq3$, then $H^0(Y,\Omega^1_Y(2))=H^0(X,\Omega^1_X(2))$.
\end{lem}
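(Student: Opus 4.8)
The plan is to compare $H^0(X,\Omega^1_X(2))$ with $H^0(Y,\Omega^1_Y(2))$ by passing through the intermediate group $H^0(X,\Omega^1_Y(2)|_X)$: on one side I would use the conormal sequence of the smooth divisor $X\subset Y$, on the other the restriction sequence. Set $N=\dim Y\geq 3$; since $X\in|\sO_Y(d)|$ with $d\geq 3$, the conormal bundle of $X$ in $Y$ is $\sO_X(-d)$.

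First I would twist the conormal sequence by $\sO_X(2)$, obtaining
\[ 0\to\sO_X(2-d)\to\Omega^1_Y(2)|_X\to\Omega^1_X(2)\to 0, \]
and take cohomology. As $\sO_X(1)$ is ample and $2-d<0$, we have $H^0(X,\sO_X(2-d))=0$. For the $H^1$ term I would use the structure sequence $0\to\sO_Y(2-2d)\to\sO_Y(2-d)\to\sO_X(2-d)\to 0$ together with the line bundle vanishings $H^1(Y,\sO_Y(2-d))=H^2(Y,\sO_Y(2-2d))=0$ of \cite[Theorem 3.1.1]{K}; the second of these is where $\dim Y\geq 3$ is needed. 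This gives $H^1(X,\sO_X(2-d))=0$, hence an isomorphism $H^0(X,\Omega^1_Y(2)|_X)\xrightarrow{\sim}H^0(X,\Omega^1_X(2))$.

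Next I would twist the restriction sequence to get $0\to\Omega^1_Y(2-d)\to\Omega^1_Y(2)\to\Omega^1_Y(2)|_X\to 0$. Multiplication by a nonzero section of $\sO_Y(d-2)$ (which exists, as $d-2\geq 1$) embeds $\Omega^1_Y(2-d)$ into $\Omega^1_Y$, and $H^0(Y,\Omega^1_Y)=0$ since $Y$ is rational, so $H^0(Y,\Omega^1_Y(2-d))=0$. For $H^1(Y,\Omega^1_Y(2-d))$ I would use Serre duality: from $(\Omega^1_Y)^\vee\otimes\omega_Y\cong\Omega^{N-1}_Y$ one gets $H^1(Y,\Omega^1_Y(2-d))^\vee\cong H^{N-1}(Y,\Omega^{N-1}_Y(d-2))$, which vanishes by the Akizuki--Nakano vanishing theorem because $\sO_Y(d-2)$ is ample and $(N-1)+(N-1)=2N-2>N$ (using again $\dim Y\geq 3$). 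Thus the restriction sequence gives $H^0(Y,\Omega^1_Y(2))\xrightarrow{\sim}H^0(X,\Omega^1_Y(2)|_X)$.

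Composing the two isomorphisms gives $H^0(Y,\Omega^1_Y(2))\cong H^0(X,\Omega^1_X(2))$, and one checks that the composite is just the natural restriction map induced by $\Omega^1_Y(2)\to\Omega^1_Y(2)|_X\to\Omega^1_X(2)$. The main obstacle is the vanishing $H^1(Y,\Omega^1_Y(2-d))=0$: this twist is negative, so it falls outside the range $a\geq 1$ treated in Lemma \ref{studiopesi}, and rather than rerunning that Borel--Weil/spectral-sequence computation for negative twists I expect the Serre duality plus Akizuki--Nakano argument above to be the cleanest. It is also worth recording that $\dim Y\geq 3$ is genuinely used (twice), in agreement with the hypothesis.
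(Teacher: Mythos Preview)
Your proof is correct and follows the same two-step strategy as the paper: pass through $H^0(X,\Omega^1_Y(2)|_X)$ using the twisted conormal sequence and the restriction sequence, and verify the needed vanishings on the flanking terms.

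The only point of departure is how you justify $H^0(Y,\Omega^1_Y(2-d))=H^1(Y,\Omega^1_Y(2-d))=0$. The paper simply invokes \cite[Theorem 3.1.1]{K}, which is a general vanishing theorem for $H^i(Y,\Omega^p_Y(a))$ on these homogeneous spaces and covers negative twists directly. Your route---embedding into $\Omega^1_Y$ for the $H^0$ part and Serre duality plus Akizuki--Nakano for the $H^1$ part---is more self-contained and makes the use of $\dim Y\geq 3$ and $d\geq 3$ explicit, at the cost of a couple of extra lines. Either argument works; since the paper already relies on \cite[Theorem 3.1.1]{K} elsewhere, citing it here is the more economical choice in context, but your version has the advantage of not depending on that reference.
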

\begin{proof}
We look at the short exact sequences 
\begin{equation}
0\to \sO_X(2-d)\to \Omega^1_{Y}(2)|_X\to \Omega^1_X(2)\to 0
\end{equation} and 
\begin{equation}
0\to \Omega^1_Y(2-d)\to \Omega^1_{Y}(2)\to \Omega^1_{Y}(2)|_X\to 0
\end{equation}
Using the first we get the isomorphism $H^0(X,\Omega^1_{Y}(2)|_X)=H^0(X,\Omega^1_X(2))$ since $H^i(\sO_X(2-d))=0$ for $0\leq i\leq \dim X-1$ by the sequence $0\to \sO_Y(2-2d)\to \sO_Y(2-d)\to \sO_X(2-d)\to 0$ and the vanishings of \cite[Theorem 3.1.1]{K}.
In the same fashion from the second we get the isomorphism $H^0(Y,\Omega^1_{Y}(2))=H^0(X,\Omega^1_{Y}(2)|_X)$ since $H^i(Y,\Omega^1_Y(2-d))=0$ for $0\leq i\leq \dim X-1$.
\end{proof}
By this simple Lemma it will be enough to look at the dimension of the spaces $H^0(Y,\Omega^1_{Y}(2))$ and this will be done using the machinery introduced in Section \ref{cotangent} and in particular in Proposition \ref{sommadiretta}.
\begin{thm}
\label{lungo}
Let $Y$ given by $(\lieg,\alpha_r)$ be one of the varieties in Table \ref{tabellapesi}, $\dim Y\geq3$, and let $X$ be a smooth hypersurface of degree $d\geq3$ in $Y$ as above, then $h^0(Y,\Omega^1_Y(2))=h^0(X,\Omega^1_X(2))\geq\dim X+1=\dim Y$.
\end{thm}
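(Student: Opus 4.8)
The plan is to reduce the inequality $h^0(Y,\Omega^1_Y(2))\geq\dim Y$ to a weight-by-weight computation using the filtration machinery from Section \ref{cotangent}. By Proposition \ref{sommadiretta} we have $h^0(Y,\Omega^1_Y(2))=\sum_i h^0(Y,G^i\Omega^1_Y(2))$, and by Lemma \ref{studiopesi} each graded piece $G^i\lien^+(2)$ is an irreducible homogeneous bundle whose lowest weight $-\mu_i$ (read off from Table \ref{tabellapesi}, twisted by $-2\lambda_r$) satisfies that $\mu_i+\delta$ is regular of index $0$, i.e. $\mu_i$ is dominant. So by the Generalized Borel-Weil Theorem \ref{bw} each $H^0(Y,G^i\Omega^1_Y(2))$ is the irreducible $G$-module $G_{-\mu_i}$ of lowest weight $-\mu_i$, whose dimension is given by the Weyl dimension formula \eqref{weilformula}. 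Thus it suffices to show
$$\sum_i\dim G_{-\mu_i}\geq\dim Y.$$

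The first concrete step would be to isolate the contribution of $G^1\lien^+$, which always has lowest weight $\alpha_r$ by the Remark after Table \ref{tabellapesi}; twisting gives $-\alpha_r+2\lambda_r$, so $H^0(Y,G^1\Omega^1_Y(2))=G_{-(2\lambda_r-\alpha_r)}$. I would compute $\dim G_{-(2\lambda_r-\alpha_r)}$ via \eqref{weilformula} and compare it with $\dim Y = \dim \lien^+ = |\Phi(\lien^+)|$. In the Hermitian symmetric case $\lien^+$ is itself irreducible (only $G^1$ appears), with lowest weight $\alpha_r$ and $H^0(Y,\Omega^1_Y(1))=G_{-(\lambda_r-\alpha_r)}=\lieg$ — here one wants the twist by $2$ rather than $1$, so the relevant module is $G_{-(2\lambda_r-\alpha_r)}$, which contains $\lieg$ as a summand under tensoring by the $G$-module $H^0(Y,\sO_Y(1))$; in any case its dimension comfortably exceeds $\dim Y$. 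For the non-Hermitian cases, the extra graded pieces $G^i$ with $i\geq 2$ only add further positive summands, so once the $i=1$ piece alone is shown to have dimension $\geq \dim Y$ — or, where it does not, the sum of the first two pieces is — we are done.

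In practice I expect the cleanest route is a uniform argument plus a short finite check. Uniformly: $H^0(Y,\Omega^1_Y(1))$ already equals $\lieg$ (this is the statement that $\Omega^1_Y(1)$ is globally generated with sections the Lie algebra, valid since $\sO_Y(1)$ is the minimal equivariant embedding), hence has dimension $\dim\lieg > \dim Y$; and there is a natural injection $H^0(Y,\Omega^1_Y(1))\otimes H^0(Y,\sO_Y(1))\to H^0(Y,\Omega^1_Y(2))$ whose image has dimension at least $\dim\lieg\geq\dim Y$ provided the map $\lieg\otimes V_{\lambda_r}\to H^0(Y,\Omega^1_Y(2))$ is injective, which it is because $\sO_Y(1)$ has nonzero sections and $\Omega^1_Y(1)$ is a subsheaf-quotient that stays torsion-free. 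This gives $h^0(Y,\Omega^1_Y(2))\geq\dim\lieg>\dim Y$ in one stroke. Alternatively, one falls back to the Weyl-formula estimate $\dim G_{-(2\lambda_r-\alpha_r)}=\prod_{\alpha\in\Phi^+}\frac{(2\lambda_r-\alpha_r+\delta,\alpha)}{(\delta,\alpha)}$ and checks it is $\geq|\Phi(\lien^+)|$ type by type.

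The main obstacle is making sure the comparison $\dim G_{-\mu_1}\geq\dim Y$ (or the sum over all $i$) genuinely holds in every case of Table \ref{tabellapesi}, including the quadrics $Q^3=(B_2,\alpha_1)$ or small-rank classical cases where $\dim Y$ is small but so is $\dim\lieg$ — though since $\dim\lieg\geq 2\dim\lien^+ + \mathrm{rk}\,\lieg = 2\dim Y + \mathrm{rk}\,\lieg$ always, the uniform bound $\dim\lieg > \dim Y$ is never in danger. The only genuinely delicate point is justifying that the multiplication map $H^0(\Omega^1_Y(1))\otimes H^0(\sO_Y(1))\to H^0(\Omega^1_Y(2))$ is injective (or at least has image of dimension $\geq\dim Y$); if one prefers to avoid this, the type-by-type Weyl-formula check is elementary but tedious, and I would organize it by the two kinds of weights in the Remark after Table \ref{tabellapesi}, handling $\alpha_r$ and $\lambda_r-\lambda'$ (resp. $2\lambda_r-\lambda'$) separately.
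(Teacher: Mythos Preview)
Your ``uniform'' argument has a genuine error: the claim $H^0(Y,\Omega^1_Y(1))\cong\lieg$ is false. By the very Lemma \ref{studiopesi} you invoke, for $a=1$ the weight $-\alpha_r+\lambda_r+\delta$ is \emph{singular}, so $H^0(Y,G^1\Omega^1_Y(1))=0$ by Borel--Weil. In the Hermitian symmetric case $G^1$ is the only graded piece, hence $H^0(Y,\Omega^1_Y(1))=0$ (e.g.\ $h^0(\mP^n,\Omega^1(1))=0$, and the paper explicitly records $h^0(Y,\Omega^1_Y(1))=0$ for $(C_l,\alpha_r)$, $r<l$). You are conflating this with $H^0(Y,T_Y)\cong\lieg$. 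The identification $G_{-(\lambda_r-\alpha_r)}=\lieg$ is also wrong: $\lambda_r-\alpha_r$ is not dominant in general, and where it is, it is not the highest root. So the whole chain ``$h^0(\Omega^1_Y(2))\geq\dim\lieg>\dim Y$'' collapses at the first step. (Separately, the multiplication map $H^0(\Omega^1_Y(1))\otimes H^0(\sO_Y(1))\to H^0(\Omega^1_Y(2))$ is essentially never injective on the tensor product; what you would need is a lower bound on the image, and your justification is not an argument.)

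What remains is your fallback, the type-by-type Weyl-formula check, which you describe as ``elementary but tedious'' without doing it. That check \emph{is} the content of the paper's proof: it runs through the cases of Table \ref{tabellapesi}, sometimes bounding $h^0(\Omega^1_Y(2))$ below by $h^0(\Omega^1_Y(1))$ (which is nonzero and already large for certain $B_l$, $D_l$, and exceptional types), and in the remaining cases (notably $(C_l,\alpha_r)$ where $h^0(\Omega^1_Y(1))=0$) estimating the Weyl product from below by restricting to a convenient subset of $\Phi^+$. Since your uniform shortcut fails, you cannot avoid this case analysis.
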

\begin{proof}
Since the computations are often similar, we only show some meaningful cases and simply give an overview of the others. We also show how in some cases we can use classically know cohomological dimensions as $h^0(Y,\sO_Y(1))$ and $h^0(Y,\Omega_Y^1(1))$.

\emph{Case $(A_l,\alpha_r)$ with $1\leq r\leq l$}
\newline
These are Grassmannian varieties. The result for this class is the content of \cite{RZ3}, see also \cite[Theorem page 169]{snow}.

\emph{Case $(B_l,\alpha_1)$}
\newline
These are quadric hypersurfaces $Q=Q^N$ in $\mP^{N+1}$, $N=2l-1$. The result can be easily obtained by a cohomological analysis of the twisted conormal exact sequence $$0\to \sO_Q\to \Omega^1_{\mP^{N+1}}(2)|_Q\to \Omega^1_Q(2)\to 0.$$
Using Bott vanishing theorem (see \cite{bott}) we get $h^0(\Omega^1_Q(2))=\binom{N+2}{2}$ and we immediately see that this is greater than $\dim Q=N$.

\emph{Case $(B_l,\alpha_r)$ with $r=2,3$}
\newline
For this case the computation is very similar to the case $(C_l,\alpha_r)$ that we will explicitly see later. Hence we will omit it here.

\emph{Case $(B_l,\alpha_r)$ with $4\leq r\leq l-1$}
\newline
Also in this case the dimension can be directly computed, but since Konno in \cite[Table 3]{K1} computes the dimension $h^0(Y,\Omega^1_Y(1))=\binom{2l+1}{r-2}$ and since $h^0(Y,\Omega^1_Y(2))\geq h^0(Y,\Omega^1_Y(1))$ it is enough to show that $h^0(Y,\Omega^1_Y(1))\geq \dim Y$. We have that $\dim Y=2r(l-r)+r(r+1)/2$ hence the inequality to prove is
$$
\binom{2l+1}{r-2}\geq 2r(l-r)+r(r+1)/2.
$$ For simplicity we prove that 
$$
\binom{2l+1}{r-2}\geq 2r(l-r)+r(r+1)=r(2l-r+1)
$$ which of course implies the previous inequality. Developing the binomial coefficient we have 
$$
\frac{(2l+1)(2l)\cdots(2l-r+4)}{(r-2)!}\geq r(2l-r+1)
$$ and since $2l-r+4>2l-r+1$ this can be simplified to 
$$
\frac{(2l+1)(2l)\cdots(2l-r+5)}{(r-2)!}\geq r.
$$ Dividing by $r$ we get
$$
\frac{2l+1}{r}\cdot\frac{2l}{r-2}\cdots\frac{2l-r+5}{3}\cdot\frac{1}{2}\geq 1
$$
By $r\leq l-1$ all the fractions except $1/2$ are strictly greater than 2, completing this case.

\emph{Case $(C_l,\alpha_r)$ with $2\leq r\leq l-1$}
\newline
Here $h^0(Y,\Omega^1_Y(1))=0$ hence we have to directly tackle the sheaf $\Omega^1_Y(2)$.

Let's briefly recall the root system of  $(C_l,\alpha_r)$, see \cite[Section 12]{Hump}. Denoting by $\e_j$ the standard basis of $E=\mR^l$, the vectors $\pm2\e_i$ and $\pm(\e_i\pm\e_j)$, $i\neq j$, form a root system of $C_l$ with basis $\{\e_1-\e_2,\dots,\e_{l-1}-\e_l,2\e_l\}$. Hence we will denote $\alpha_i=\e_i-\e_{i+1}$, $i=1,\dots,l-1$, and $\alpha_l=2\e_l$.

For convenience we will divide the positive roots into three subsets:
\begin{itemize}
	\item[-] $\sum_{k=i}^{j-1}\alpha_k$ with $i=1,\dots,l-1$ and $j>i$ which correspond to $\e_i-\e_j$.
	\item[-] $\alpha_l$ and $2\sum_{k=i}^{l-1}\alpha_k+\alpha_l$ with $i=1,\dots,l-1$ which correspond to $2\e_i$.  
	\item[-] $\sum_{k=i}^{j-1}\alpha_k+2\sum_{k=j}^{l-1}\alpha_k+\alpha_l$ with $i=1,\dots,l-1$, $j>i$ which correspond to $\e_i+\e_j$.
\end{itemize} We denote these sets by $A$, $B$ and $C$ respectively and we have that $A\cup B\cup C=\Phi^+$. Note that the roots of $A$ and $C$ are short while the roots of $B$ are long.
It is not difficult to see that we have 
\[ 
 \langle \delta, \alpha\rangle = 
  \begin{dcases*} 
  j-i& if  $\alpha\in A$ \\ 
  l-i+1& if $\alpha\in B$\\
	2l-i-j+2& if $\alpha \in C$
  \end{dcases*} 
\]

By Table \ref{tabellapesi} and Proposition \ref{sommadiretta}, $h^0(\Omega^1_Y(2))$ only has two summands, the first corresponding to $G^1\Omega^1_Y(2)$ with lowest weight $\alpha_r-2\lambda_r$ and the second corresponding to $G^2\Omega^1_Y(2)$ with lowest weight $2\lambda_r-2\lambda_{r-1}-2\lambda_r=-2\lambda_{r-1}$. Hence it is enough to show that $h^0(G^i\Omega^1_Y(2))\geq\dim Y$ for $i=1$ or $i=2$. In this case we will prove that $h^0(G^2\Omega^1_Y(2))\geq\dim Y$.

We compute this dimension using the Weyl formula (\ref{weilformula}) and, as a further simplification, we will run the product of this formula not on all $\alpha\in \Phi^+$, but just on $\alpha\in A$, that is 
\begin{equation}
\prod_{\alpha\in A}\frac{\langle 2\lambda_{r-1}+\delta,\alpha \rangle}{\langle \delta,\alpha \rangle}=\prod_{\alpha\in A}\frac{2\langle \lambda_{r-1},\alpha \rangle+\langle \delta,\alpha \rangle}{\langle \delta,\alpha \rangle}
\label{cl}
\end{equation}
Now if $\alpha=\sum_{i=1}^{j-1}\alpha_i$ we have that $\langle \delta,\alpha \rangle=j-i$ and 
\[ 
  \langle \lambda_{r-1},\alpha \rangle= 
  \begin{dcases*} 
  1 & if  $\alpha_{r-1}$ appears in $\alpha$ \\ 
  0& otherwise
  \end{dcases*} 
\] hence the only $\alpha\in A$ that give a contribution in (\ref{cl}) are those with $i\leq r-1\leq j-1$. For these roots we have $\frac{2\langle \lambda_{r-1},\alpha \rangle+\langle \delta,\alpha \rangle}{\langle \delta,\alpha \rangle}=\frac{2+j-i}{j-i}$ and (\ref{cl}) becomes 
\begin{equation}
\begin{split}
\prod_{\alpha\in A}\frac{2\langle \lambda_{r-1},\alpha \rangle+\langle \delta,\alpha \rangle}{\langle \delta,\alpha \rangle}=\prod_{i=1}^{r-1}\prod_{j=r}^l \frac{2+j-i}{j-i}=\prod_{i=1}^{r-1}\frac{(2+r-i)(3+r-i)\cdots(2+l-i)}{(r-i)(r+1-i)\cdots(l-i)}=\\
=\prod_{i=1}^{r-1}\frac{(1+l-i)(2+l-i)}{(r-i)(r+1-i)}=\frac{(l+1)(l)\cdots(l-r+3)}{r!}\cdot\frac{(l)(l-1)\cdots(l-r+2)}{(r-1)!}
\end{split}
\label{cl2}
\end{equation}
It remains to prove that this is bigger than the dimension of $Y$ which is $\dim Y=2r(l-r)+r(r+1)/2$. It is in fact greater than $2r(l-r)+r(r+1)=r(2l-r+1)$, that is 
$$
\frac{(l+1)(l)\cdots(l-r+3)}{r!}\cdot\frac{(l)(l-1)\cdots(l-r+2)}{(r-1)!}\geq r(2l-r+1).
$$ This is equivalent to
$$
\frac{(l+1)(l)\cdots(l-r+3)}{r!}\cdot\frac{(l)(l-1)\cdots(l-r+2)}{r!}\geq (2l-r+1)
$$ and, recalling that $r\leq l-1$, it is not difficult to see that this inequality holds for all $l\geq3$, maybe with some attention in the case of low $l$. 

\emph{Case $(C_l,\alpha_l)$}
\newline
This is the last case of the $C_l$ class. We point out that in this case we are dealing with a Hermitian symmetric space and $\Omega^1_Y(2)$ is irreducible with lowest weight $\alpha_{l}-2\lambda_l$.

Hence $h^0(Y,\Omega^1_Y(2))$ is given by 
\begin{equation}
\label{clrl}
\prod_{\alpha\in \Phi^+}\frac{\langle-\alpha_l+2\lambda_l+\delta,\alpha\rangle}{\langle\delta,\alpha\rangle}.
\end{equation} We can work as in the previous case and restrict the product on the roots of $A$. Hence if $\alpha\in A$ is $\alpha=\sum_{k=i}^{j-1}\alpha_k$, $i=1,\dots,l-1$ and $j>i$, we have that $\langle\delta,\alpha\rangle=j-i$ as before, $\langle\lambda_l,\alpha\rangle=0$ and 
\[ 
  \langle \alpha_l,\alpha \rangle= 
  \begin{dcases*} 
  -2& if  $\alpha_{l-1}$ appears in $\alpha$ i.e. $j=l$ \\ 
  0& otherwise
  \end{dcases*} 
\] Therefore the only roots giving a contributions are $\alpha=\sum_{k=i}^{l-1}\alpha_k$ and in this case $\frac{\langle-\alpha_l+2\lambda_l+\delta,\alpha\rangle}{\langle\delta,\alpha\rangle}=\frac{2+l-i}{l-1}$ and (\ref{clrl}) is 
$$
\prod_{i=1}^{l-1}\frac{2+l-i}{l-1}=\frac{(l+1)l\cdots3}{(l-1)(l-2)\cdots1}=\frac{l(l+1)}{2}.
$$ The dimension of our space in this case is also exactly $\frac{l(l+1)}{2}$ hence we are done since proving the equality is enough. Nevertheless note that if we add also the roots of $B$ and $C$ in the computation we will get a strict inequality $h^0(Y,\Omega^1_Y(2))>\dim Y$.

\emph{Case $(D_l,\alpha_1)$}
\newline
This is a quadric hypersurface $Q^N$ in $\mP^{N+1}$ with $N=2l-2$. The computations are the same as the case $(B_l,\alpha_1)$.

\emph{Case $(D_l,\alpha_r)$ with $4\leq r\leq l-1$}
\newline
Of course it is possible to use the previous general strategy to compute $h^0(Y,\Omega_Y^1(2))$ but as in the case of $(B_l,\alpha_r)$, $4\leq r\leq l-1$, it is also possible to note that $h^0(Y,\Omega^1_Y(1))$ is already greater than the dimension.
In fact here $h^0(Y,\Omega^1_Y(1))=\binom{2l}{r-2}$ and $\dim Y=2r(l-r)+r(r-1)/2$ (see \cite[Table 3]{K1}) and the inequality is very similar to the one seen before.

\emph{Case $(D_l,\alpha_r)$ with $r=3,4$ and $r=l$}
\newline
Here we estimate $h^0(Y,\Omega^1_Y(2))$ directly and as before it is convenient to partition the positive roots into subsets of $\Phi^+$.
Using the same notation as before, the root system of  $(D_l,\alpha_r)$ is given by the vectors $\pm(\e_i\pm\e_j)$, $i\neq j$, see \cite[Section 12]{Hump}. For basis we take $\alpha_i=\e_i-\e_{i+1}$, $i=1,\dots,l-1$, and $\alpha_l=\e_{l-1}+\e_l$. 

This time we divide the positive roots into two subsets:
\begin{itemize}
	\item[-] $\sum_{k=i}^{j-1}\alpha_k$ with $i=1,\dots,l-1$ and $j>i$ which correspond to $\e_i-\e_j$.
	\item[-] $\alpha_l$, $\sum_{k=i}^{l-2}\alpha_k+\alpha_l$ and $\sum_{k=i}^{l-2}\alpha_k+\alpha_l+\sum_{k=j}^{l-1}\alpha_k$ with $i=1,\dots,l-2$, $j>i$ which correspond to $\e_i+\e_j$.
\end{itemize} We denote these sets by $A$, $B$. Note that the all the roots have the same length and 
\[ 
  \langle \delta,\alpha \rangle= 
  \begin{dcases*} 
  j-i & if  $\alpha\in A$ \\ 
  2l-i-j& if $\alpha\in B$.
  \end{dcases*} 
\] 
The use of the Weyl formula and the following inequality are very similar to previous cases and they immediately give the result.

\emph{Exceptional cases $E_6, E_7, E_8, F_4$, and $G_2$}
\newline 
Here the situation is a little easier because the dimensions are explicit and not functions of $r$ and $l$. We can mostly divide all these cases into three subgroups.

\emph{First Group: $(E_6,\alpha_r)$ with $r=3,4$, $(E_7,\alpha_r)$ with $r=2,\ldots,6$, $(E_8,\alpha_r)$ with $r=1,\dots,7$, $(F_4,\alpha_r)$ with $r=2,3$.}
\newline 
In all these cases $h^0(Y,\Omega^1_Y(1))>\dim Y$ and as we have seen before this is enough. For these dimension we again refer to \cite[Table 3]{K1}.

\emph{Second Group: $(E_6,\alpha_2)$, $(E_7,\alpha_1)$, $(E_8,\alpha_8)$, $(F_4,\alpha_1)$, $(G_2,\alpha_2)$.}
\newline 
By Table \ref{tabellapesi}, note that in all these cases $(\lieg,\alpha_r)$ we have that $\lambda_r$ is the lowest weight of one of the $G^i\Omega^1$, hence $\lambda_r-2\lambda_r=-\lambda_r$ is the lowest weight of the corresponding $G^i\Omega^1(2)$. Since $-\lambda_r$ is also the lowest weight of $\sO_Y(1)$ it follows that $h^0(Y,G^i\Omega^1(2))=h^0(Y,\sO(1))$. These dimensions are all known, see for example \cite[Table 1]{K1}, and they are all greater than $\dim Y$.

\emph{Third Group: $(E_6,\alpha_1)$, $(E_7,\alpha_7)$, $(E_8,\alpha_8)$, $(F_4,\alpha_4)$.}
\newline
Here we can only explicitly compute the dimension. The roots are well known, for example see \cite[Section 12]{Hump} or \cite[Appendix]{K} for a list.
We report the case $(F_4,\alpha_4)$, the other are very similar. The elements of $\Phi$ in $\mR^4$ are given by the vectors $\pm\e_i$, $\pm(\e_i\pm\e_j)$, $i\neq j$, $\pm\frac{1}{2}(\e_1\pm\e_2\pm\e_3\pm\e_4)$. We take $\{\e_2-\e_3,\e_3-\e_4,\e_4,\frac{1}{2}(\e_1-\e_2-\e_3-\e_4)\}$ as a basis of simple roots. 
By Table \ref{tabellapesi} we see that $G^1\Omega^1_Y(2)$ has lowest weight $\alpha_4-2\lambda_4$ and $G^2\Omega^1_Y(2)$ has lowest weight $-\lambda_1$. Hence by Weyl formula 
$$
h^0(Y,G^1\Omega^1_Y(2))=\prod_{\alpha\in \Phi^+}\frac{\langle-\alpha_4+2\lambda_4+\delta,\alpha\rangle}{\langle\delta,\alpha\rangle}=273
$$
and
$$
h^0(Y,G^2\Omega^1_Y(2))=\prod_{\alpha\in \Phi^+}\frac{\langle\lambda_1+\delta,\alpha\rangle}{\langle\delta,\alpha\rangle}=52
$$ hence $h^0(Y,\Omega^1_Y(2))=325>\dim Y=15$.
\end{proof}

\section{Infinitesimal Torelli and Massey Product}
\label{sez7}
In this section we finally put together the theory of infinitesimal Torelli theorem and Massey products.
As anticipated in the previous sections,  we take $X$ an $(N-1)$-dimensional variety which is a smooth member of the linear system $|\sO_Y(d)|$, $d\geq3$, and a class $\xi$ giving an infinitesimal deformation $\xi\in \text{Ext}^1(\Omega^1_X,\sO_X)\cong H^1(X,T_X)$
\begin{equation}
\label{estensionedivisore1}
0\to\sO_X\to \Omega^1_{\sX}|_X\to\Omega^1_X\to0.
\end{equation}
By Theorem \ref{lungo} and Proposition \ref{sollevabile} we now that if we twist this sequence by $\sO_X(2)$
\begin{equation}
\label{twist1}
0\to\sO_X(2)\to \Omega^1_{\sX}(2)|_X\to\Omega^1_X(2)\to0
\end{equation}
we have that $h^0(X,\Omega^1_X(2))\geq\dim X+1=N$ and all the global sections of $H^0(X,\Omega^1_X(2))$ are liftable to $H^0(X,\Omega^1_{\sX}(2)|_X)$.
The key point is considering, together with (\ref{estensionedivisore1}), the conormal exact sequence 
\begin{equation}
0\to\sO_X(-d)\to \Omega^1_{Y}|_X\to\Omega^1_X\to0
\end{equation}
also twisted by $\sO_X(2)$. These sequences give the diagram 
\begin{equation}
\label{diagramma6}
\xymatrix{
&&0&0&\\
0\ar[r]&\sO_X(2)\ar[r]&\Omega^1_{\sX}(2)|_X\ar[u]\ar[r]&\Omega^1_X(2)\ar[u]\ar[r]&0\\
0\ar[r]&\sO_X(2)\ar[r]\ar@{=}[u]&\sG\ar[r]\ar[u]&\Omega^1_{Y}(2)|_X\ar[u]\ar[r]&0\\
&&\sO_X(2-a)\ar[u]\ar@{=}[r]&\sO_X(2-d)\ar[u]&\\
&&0\ar[u]&0.\ar[u]&}
\end{equation}

By Proposition \ref{etutto} our deformation $\xi$ comes from an element $R\in H^0(Y,\sO_Y(d))$, hence the middle horizontal sequence completing diagram (\ref{diagramma6}) is associated to the zero element of $H^1(X,T_{Y}|_X)$. Therefore we have the splitting of the second row and we can find a diagonal splitting 
$$\phi\colon \Omega^1_{Y}(2)|_X\to \Omega^1_{\sX}(2)|_X.$$
Since $\det(\Omega^1_{Y}(2)|_X)\cong \Omega_X^{N-1}(2N-d)$ and $\det(\Omega^1_{\sX}(2)|_X)\cong \Omega^{N-1}_X(2N)$, it is not difficult to see that $\phi$ at the level of global top forms induces a map 
\begin{equation*}
\phi^n\colon H^0(X,\Omega_X^{N-1}(2N-d))\to H^0(X,\Omega_X^{N-1}(2N))
\end{equation*} given by multiplication with the section $R|_{X}\in H^0(X,\sO_X(d))$, see \cite[Proposition 5.2.1]{RZ3}.

Now since $H^1(\sO_X(2-d))$ vanishes, all the global sections in $H^0(X,\Omega_X^1(2))$ can be lifted not only to $H^0(X,\Omega^1_{\sX}(2)|_X)$ but also to $H^0(Y,\Omega^1_{\sX}(2)|_X)$.

Consider $\dim X+1=N$ global sections $\eta_1,\ldots,\eta_N\in H^0(X,\Omega^1_X(2))$ which have unique liftings $\tilde{s}_1,\ldots,\tilde{s}_N\in H^0(X,\Omega^1_{Y}(2)|_X)$. Call $\widetilde{\Omega}\in H^0(X,\Omega_X^{N-1}(2N-d))$ the generalized Massey product corresponding to $\tilde{s}_1\wedge\dots\wedge \tilde{s}_N$. If we take $s_1:=\phi(\tilde{s}_1),\ldots,s_N:=\phi(\tilde{s}_N)\in H^0(X,\Omega^1_{\sX}(2)|_X)$, we have that the generalized Massey product $\Omega\in H^0(X,\det (\Omega^1_{\sX}(2)|_X))=H^0(X,\Omega_X^{N-1}(2N))$ corresponding to $s_1\wedge\dots\wedge s_N$ is $\Omega=\widetilde{\Omega}\cdot R$. We point out that $\widetilde{\Omega}$ does not depend on the deformation $\xi$, while $\Omega$ obviously does.

\begin{thm}
\label{teoxi}
Assume that $W=\left\langle \eta_1\ldots,\eta_N\right\rangle$ is a generic subspace in $H^0(X,\Omega^1_X(2))$ with $\lambda^NW\neq0$.
Then $R$ is in the pseudo-Jacobi ideal $\sJ_{d}$ if and only if the Massey product $\Omega$ is in the image of $H^0(X,\sO_X(2))\otimes \lambda^NW\to H^0(X,\Omega_X^{N-1}(2N))$.
\end{thm}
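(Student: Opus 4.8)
The statement to prove, Theorem \ref{teoxi}, is essentially a transfer of the Generalized Adjoint Theorem \ref{theoremA} from the sheaf-theoretic setting of Section \ref{aggiunta} to the concrete geometric setting at hand, combined with the identification $R_d\cong H^1(X,T_X)$ from Proposition \ref{etutto}. The plan is to apply Theorem \ref{theoremA} to Sequence (\ref{twist1}), i.e.\ with $\sL=\sO_X(2)$, $\sF=\Omega^1_X(2)$, $\sE=\Omega^1_{\sX}(2)|_X$, and $\xi\in H^1(X,T_X)\cong\mathrm{Ext}^1(\Omega^1_X,\sO_X)\cong\mathrm{Ext}^1(\Omega^1_X(2),\sO_X(2))$, using the subspace $W=\langle\eta_1,\dots,\eta_N\rangle$, which by Proposition \ref{sollevabile} lies in $\ker\delta_\xi$. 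For a generic such $W$ we may assume $D_W=0$: since $\lambda^N W\neq0$ and $W$ is generic, the fixed divisor of $|\lambda^N W|$ is empty (the sections $\omega_i$ have no common divisorial zero locus for generic choice — this should be argued using the base-point-freeness/ampleness of $\sO_X(2)$ and of $\det\Omega^1_X(2)$, which by Kodaira-type vanishing and Theorem \ref{lungo} is effective and moves). With $D_W=0$, Remark \ref{zerozero} says that $\Omega\in\Ima(H^0(X,\sO_X(2))\otimes\lambda^N W\to H^0(X,\det\sE))$ is equivalent to $\xi\in\ker\big(H^1(X,\Omega^1_X{}^\vee\otimes\sO_X(2))\to H^1(X,\Omega^1_X{}^\vee\otimes\sO_X(2)(D_W))\big)$, which since $D_W=0$ just says $\xi=0$ in $H^1(X,T_X)$ — but this is too strong, so the point is rather to run Theorem \ref{theoremA} with the \emph{moving part}, or more precisely to observe that the correct equivalence is with $\xi$ lying in the kernel of the restriction map to $H^0$ of the relevant twist; I will return to this subtlety below.

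The cleaner route, which I would actually take, is to use the explicit factorization $\Omega=\widetilde\Omega\cdot R$ established just before the statement, where $\widetilde\Omega\in H^0(X,\Omega^{N-1}_X(2N-d))$ is independent of $\xi$ and $R\in H^0(Y,\sO_Y(d))$ induces $\xi$ via Proposition \ref{etutto}. First I would note that, by the diagram (\ref{diagramma6}) and the splitting $\phi$, the map "multiply by $R|_X$" realizes $H^0(X,\Omega^{N-1}_X(2N-d))\to H^0(X,\Omega^{N-1}_X(2N))$ and carries $\widetilde\Omega$ to $\Omega$. Next, I would unwind what membership in $\sJ_d$ means: by definition $R\in\sJ_d$ iff $R$ lies in the image of the contraction map $H^0(Y,\sO_Y(d)\otimes\Sigma_{\sO_Y(d)}\otimes\sO_Y(-d))=H^0(Y,\Sigma_{\sO_Y(d)})\to H^0(Y,\sO_Y(d))$ given by $\widetilde{d\tau}$, equivalently (via Proposition \ref{etutto}, since $H^1(Y,T_Y(-\log X))$ is the cokernel of exactly this map) iff $\xi=0$ in $H^1(X,T_X)$. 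So I must show: $\Omega\in\Ima(H^0(X,\sO_X(2))\otimes\lambda^N W\to H^0)$ iff $\xi=0$, for generic $W$.

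The forward implication (Massey product decomposable $\Rightarrow$ $\xi=0$) is the substantive direction and follows from the first half of Theorem \ref{theoremA}: if $\Omega$ is in that image and $D_W=0$ then $\xi\in\ker(H^1(X,T_X)\to H^1(X,T_X(D_W)))=\ker(\mathrm{id})=0$. For the converse, if $\xi=0$ then Sequence (\ref{twist1}) splits, hence $\Omega$ — a wedge of liftings of the $\eta_i$ — manifestly lies in the image of $H^0(X,\sO_X(2))\otimes\lambda^N W$ (this is exactly the "$\xi=0\Rightarrow$ decomposable" direction, the easy content of Remark \ref{zerozero} together with $H^0(X,\sO_X(2))\cong H^0(X,\sO_X(2)(D_W))$ when $D_W=0$). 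The one genuine obstacle is the genericity/base-locus bookkeeping: I must verify that for generic $W$ with $\lambda^N W\neq0$ the fixed divisor $D_W$ vanishes, so that Theorem \ref{theoremA} and Remark \ref{zerozero} apply in the sharp form. The hard part will therefore be establishing $D_W=0$ generically — this requires knowing that the linear system $|\lambda^N W|$ (equivalently, the image of $\bigwedge^N W$ in $H^0(X,\det\Omega^1_X(2))$) has empty base divisor, which in turn I expect to deduce from the fact, provided by Theorem \ref{lungo} and the Borel--Weil computations of Section \ref{cotangent}, that $H^0(Y,\Omega^1_Y(2))$ is large and globally generates enough, together with standard general-position arguments for wedges of generic sections of a globally generated bundle. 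Once $D_W=0$ is in hand, the theorem is an immediate application of Theorem \ref{theoremA} and Remark \ref{zerozero} combined with the identification $\sJ_d=\ker(H^0(\sO_Y(d))\to H^1(X,T_X))$ coming from Proposition \ref{etutto}.
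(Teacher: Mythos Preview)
Your approach is essentially the same as the paper's: apply Theorem~\ref{theoremA} (via Remark~\ref{zerozero}) to Sequence~(\ref{twist1}), use Proposition~\ref{etutto} to identify $R\in\sJ_d$ with $\xi=0$, and reduce everything to the claim $D_W=0$ for generic $W$. The paper dispatches this last point in one line: $\Omega^1_X(2)$ is globally generated (cf.\ \cite[p.~165]{snow}), and then \cite[Proposition~3.1.6]{PZ} gives $D_W=0$ for generic $W$ directly --- so your ``hard part'' is not hard, and your detour through the factorization $\Omega=\widetilde\Omega\cdot R$ and the momentary worry that $\xi=0$ is ``too strong'' are both unnecessary.
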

\begin{proof}
First we recall that $\Omega^1_X(2)$ is generated by its global sections, see for example \cite[Page 165]{snow}.
Hence it follows by \cite[Proposition 3.1.6]{PZ} that the fixed divisor $D_W$ is zero for generic $W$. 
Under this condition, Theorem \ref{theoremA} gives an equivalence; see also Remark \ref{zerozero}. Thus if $$\Omega\in \Ima H^0(X,\sO_X(2))\otimes \lambda^NW\to H^0(X,\Omega_X^{N-1}(2N))$$ then 
$\xi\in\ker(H^1(X,T_{X})\to H^1(X,T_{X}(D_W)))$, that is $\xi=0$ since $D_W=0$. Hence $R$ is zero in $R_d$, that is $R\in \sJ_d$.

Conversely if $R$ is in the pseudo-Jacobi ideal, then the deformation $\xi$ is zero. In particular $\xi\in\ker(H^1(X,T_{X})\to H^1(X,T_{X}(D_W)))$ and since $D_W=0$ 
 by the converse in Theorem \ref{theoremA} it follows $\Omega\in \Ima H^0(X,\sO_X(2))\otimes \lambda^NW\to H^0(X,\Omega_X^{N-1}(2N))$.
\end{proof}

We recall the following two results of Konno \cite{K}.  
The first is a version of Macaulay's Theorem, see \cite[Theorem 6.3.2]{K}.
Denote by $\sigma$ the integer $\sigma=(N+1)d-2k(Y)$ where $k(Y)$ is defined by $K_Y=\sO_Y(-k(Y))$.

\begin{thm}
\label{macaulay}
Let $Y$ one of the spaces in Table \ref{tabellapesi} and let $R=\bigoplus_{a\geq0} R_a$ be the Jacobian ring of a smooth hypersurface of degree $d\geq3$. 
Then, $R_\sigma\cong\mC$.
Furthermore for each integer $a$ such that $0 \leq a \leq d$, consider the natural duality
pairing
\begin{equation}
R_a\otimes R_{\sigma-a}\to R_{\sigma}\cong\mC
\label{pairing}
\end{equation}
This pairing induces an exact sequence $0\to R_{\sigma-a}\to R_a^\vee$ except the following cases 
\begin{itemize}
	\item[-] $Y=Q^N$: $(d, a)=(d, d-2), (3, 2), (3, 3), (4, 4)$,
	\item[-] $Y=(A_5,\alpha_2),(E_6, \alpha_1)$: $(d, a)=(3, 3)$,
	\item[-] $Y=(C_3, \alpha_2),(F_4, \alpha_4)$: $(d, a)=(d, d-1), (3, 3)$,
	\item[-] $Y=(C_l, \alpha_2); l\geq4$: $(d, a)=(d, d-1)$.
\end{itemize}
Furthermore, it induces an exact sequence $R_{\sigma-a}\to R_a^\vee\to0$
except when $Y=Q^3$ and $(d, a)=(3, 3)$.
\end{thm}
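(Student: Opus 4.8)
The plan is to resolve the Jacobian ring by the Koszul complex attached to $\tau$ and to read off the duality from Serre duality on $Y$, following the classical Macaulay argument for $\mP^n$ and its extension by Green \cite{green1} and Konno \cite{K}. Since $X=(\tau=0)$ is smooth, the section $\widetilde{d\tau}\in H^0(Y,\sO_Y(d)\otimes\Sigma_L^\vee)$ of Section~\ref{green} is nowhere zero — by \eqref{diffs} its zero scheme is $\Sing X=\emptyset$ — so the dual map $\Sigma_L(-d)\to\sO_Y$ is surjective and its Koszul complex, twisted by $\sO_Y(b)$,
\[
\mathcal{K}^\bullet(b):\qquad 0\to\textstyle\bigwedge^{N+1}\Sigma_L(b-(N+1)d)\to\cdots\to\Sigma_L(b-d)\to\sO_Y(b)\to0,
\]
is exact; on $H^0$ its last map has cokernel $R_b$ by the definition of $\sJ_b$. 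From \eqref{prolongation} one gets $\det\Sigma_L=\det T_Y=\sO_Y(k(Y))$, and since $\sigma=(N+1)d-2k(Y)$ a direct computation shows that $\mathcal{K}^\bullet(b)$ is self-dual up to twist: $\big(\mathcal{K}^\bullet(b)\big)^\vee\otimes K_Y\cong\mathcal{K}^\bullet(\sigma-b)$, after reversing the order of the terms.

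Next I would run the hypercohomology spectral sequence of $\mathcal{K}^\bullet(\sigma-a)$, whose $E_1$-page is $\bigoplus_{p,q}H^q\big(Y,\bigwedge^p\Sigma_L(\sigma-a-pd)\big)$ and which converges to $0$. Serre duality together with the self-duality above identifies this page with the dual of the $E_1$-page of $\mathcal{K}^\bullet(a)$; chasing the two spectral sequences, and using that the module structure of cohomology over $H^0$ matches the ring multiplication, one obtains that the pairing $R_a\otimes R_{\sigma-a}\to R_\sigma$ induces a map
\[
R_{\sigma-a}\longrightarrow R_a^\vee=\Ker\!\big(H^0(Y,\sO_Y(a))^\vee\to H^0(Y,\Sigma_L(a-d))^\vee\big)
\]
whose injectivity is controlled by the vanishing of the groups $H^q(Y,\bigwedge^p\Sigma_L(\sigma-a-pd))$, $q\geq1$, lying ``below the edge'' in that spectral sequence, and whose surjectivity by the vanishing of the single family lying ``above the edge'' — which is why surjectivity can fail in at most one configuration. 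For $\sigma-a=0$ one has $R_0=\mC$ (since $H^0(Y,T_Y(-d))=0$ for $d\geq3$), hence $R_0^\vee=\mC$, and the spectral sequence for $\mathcal{K}^\bullet(\sigma)$ degenerates unconditionally, giving $R_\sigma\cong R_0^\vee\cong\mC$ with no exception.

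It then remains to prove the vanishing of the relevant $H^q(Y,\bigwedge^p\Sigma_L(m))$ for every pair $(\lieg,\alpha_r)$ of Table~\ref{tabellapesi} and $0\leq a\leq d$, $d\geq3$. The bundle $\Sigma_L$ is $G$-homogeneous, so each $\bigwedge^p\Sigma_L(m)$ is a homogeneous bundle; it is not irreducible, but Konno's filtration and the spectral sequence \eqref{spectral}, combined with the Generalized Borel–Weil theorem \ref{bw} and a weight computation in the spirit of Lemma~\ref{studiopesi}, reduce each vanishing to an explicit inequality on the form $(\,,\,)$, to be checked case by case over the Dynkin diagrams as in Section~\ref{sez6}. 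The outcome should be that all the needed vanishings hold except in the finitely many low-degree cases listed: in the first list exactly one below-the-edge group survives, obstructing injectivity, while for $Y=Q^3$ with $(d,a)=(3,3)$ an above-the-edge group also survives, obstructing surjectivity.

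I expect the main obstacle to be precisely this last step. The homological part — the self-duality of $\mathcal{K}^\bullet$, the spectral-sequence chase, and the identification of the edge map with the multiplication pairing — is essentially formal; but the uniform, case-by-case verification of the Bott-type vanishing for the reducible homogeneous bundles $\bigwedge^p\Sigma_L(m)$, together with the bookkeeping required to decide in each degenerate configuration whether it is injectivity or surjectivity that is lost, is where the real work lies and where every exception in the statement originates.
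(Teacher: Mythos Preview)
The paper does not give its own proof of this theorem: it is quoted verbatim from Konno \cite[Theorem 6.3.2]{K} (note the sentence ``We recall the following two results of Konno \cite{K}'' immediately before the statement). Your outline is precisely the strategy of Konno's original argument---the Koszul resolution of $R_b$ built from the nowhere-vanishing section $\widetilde{d\tau}$ of $\sO_Y(d)\otimes\Sigma_L^\vee$, its Serre self-duality via $\det\Sigma_L\cong\sO_Y(k(Y))$ and the identity $\sigma=(N+1)d-2k(Y)$, and then a term-by-term Bott--Borel--Weil analysis of the groups $H^q(Y,\bigwedge^p\Sigma_L(m))$---so there is nothing substantive to compare.

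Your assessment of where the difficulty lies is also accurate: in \cite{K} the homological skeleton occupies only a couple of pages, while the cohomological vanishing for the (non-irreducible) homogeneous bundles $\bigwedge^p\Sigma_L(m)$ is the long part and is exactly what produces the exception list. One small caveat: your phrase ``degenerates unconditionally'' for $\mathcal{K}^\bullet(\sigma)$ still hides a vanishing check (one needs $H^q(Y,\bigwedge^p\Sigma_L(\sigma-pd))=0$ for the relevant $q\geq1$), and Konno verifies this explicitly rather than asserting it abstractly; but for $d\geq3$ on the spaces of Table~\ref{tabellapesi} it indeed goes through without exception, yielding $R_\sigma\cong\mC$.
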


The second result is the infinitesimal Torelli theorem, see \cite[Theorem 9.2.6]{K}.

\begin{thm}[Infinitesimal Torelli theorem]
\label{torelli}
Let $Y$ one of the spaces in Table \ref{tabellapesi} and $X$ a smooth hypersurface of degree $d\geq3$. Then the infinitesimal period map is injective.
\end{thm}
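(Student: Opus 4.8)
This statement is due to Konno, so I will only outline its proof, which is the extension to the present homogeneous ambients of Griffiths' residue-theoretic argument. One can in fact also read the same conclusion off Theorem \ref{teoxi} combined with the Generalized Adjoint Theorem \ref{theoremA} applied with $D_W=0$ (as in the proof of Theorem \ref{teoxi}); but I would not present it that way, since that route ultimately rests on the very Jacobian-ring facts used below and the argument would be circular.

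The first step is the Griffiths--Konno description of the primitive Hodge cohomology of $X$ via the pseudo-Jacobian ring $R=\bigoplus_a R_a$. Writing $K_Y=\sO_Y(-k(Y))$, so that $K_X=\sO_X(d-k(Y))$ by adjunction, there are canonical isomorphisms
\begin{equation*}
H^{N-1-q,\,q}_{\mathrm{prim}}(X)\;\cong\;R_{\,b_q},\qquad b_q:=(q+1)d-k(Y),\quad 0\le q\le N-1,
\end{equation*}
and, under the identification $H^1(X,T_X)\cong R_d$ of Proposition \ref{etutto}, the cup-product maps $H^1(X,T_X)\otimes H^{N-1-q,\,q}_{\mathrm{prim}}(X)\to H^{N-2-q,\,q+1}_{\mathrm{prim}}(X)$ correspond to the multiplications $R_d\otimes R_{b_q}\to R_{b_{q+1}}$ in $R$. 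Setting this up is where the homogeneous input of Sections \ref{sez2}--\ref{cotangent} enters: one needs the Bott--Konno vanishing \cite[Theorem 3.1.1]{K} together with the cohomology of the twisted sheaves $\Omega^p_Y(a)$ (through the Konno filtration and the spectral sequence \ref{spectral}) in order to run the residue calculus on $Y$ and to identify the graded of the Hodge filtration of $H^{N-1}(X)_{\mathrm{prim}}$ with the graded pieces of $R$. Granting this, $d\sP_X(\xi)=0$ for $\xi\in R_d$ if and only if $\xi\cdot R_{b_q}=0$ for every $q$ with $0\le q\le N-2$.

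It then remains the purely algebraic point that these vanishings force $\xi=0$. Let $q_0$ be the smallest index with $b_{q_0}\ge 0$; since $H^{N-1}(X)_{\mathrm{prim}}\ne 0$ in the cases at hand and Hodge symmetry rules out $q_0=N-1$ when $N\ge 3$, one has $0\le q_0\le N-2$, so in particular $\xi\cdot R_{b_{q_0}}=0$. Choose now a degree $e\ge 0$ with $b_{q_0}+e=\sigma-d$, where $\sigma=(N+1)d-2k(Y)$ as in Theorem \ref{macaulay} — the identity $\sigma-d=Nd-2k(Y)$ makes such an $e$ available in the range of degrees involved; indeed one may simply take the cup product out of the degree $b_{N-2}$ piece, for which $b_{N-2}+b_0=\sigma-d$ exactly whenever $b_0\ge 0$. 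Then combine two inputs: (i) the multiplication $R_{b_{q_0}}\otimes R_e\to R_{\sigma-d}$ is surjective; (ii) by Macaulay's Theorem \ref{macaulay} the pairing $R_d\otimes R_{\sigma-d}\to R_\sigma\cong\mC$ is perfect. From $\xi\cdot R_{b_{q_0}}=0$ and (i) we get $\xi\cdot R_{\sigma-d}=\xi\cdot\bigl(R_{b_{q_0}}\cdot R_e\bigr)=0$ in $R_\sigma$, hence $\xi=0$ by (ii). This proves injectivity of $d\sP_X$.

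The main obstacle is the uniform verification of (i) and (ii) as $Y$ ranges over Table \ref{tabellapesi} and $d\ge 3$: this is exactly the content — and the source of the exceptions — of Theorem \ref{macaulay}. In the finitely many degenerate small cases (the cubic quadrics, and $Y=Q^N,(A_5,\alpha_2),(E_6,\alpha_1),(C_3,\alpha_2),(F_4,\alpha_4),(C_l,\alpha_2)$ in the indicated low degrees, in particular $Q^3$ and $(D_3,\alpha_2)$ in degree $3$) the generic duality statement is unavailable, and one must argue directly — realizing $X$ as a complete intersection in a projective space when $Y$ is a quadric and invoking the classical infinitesimal Torelli results for complete intersections, or switching to another cup product among the Hodge pieces for which the relevant pairing remains nondegenerate — in order to recover injectivity. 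Setting up the residue isomorphism of the first step is itself substantial, but it is available from the cohomological machinery recalled in Sections \ref{sez2}--\ref{cotangent}.
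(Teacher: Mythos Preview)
The paper does not prove this theorem at all: it is stated and immediately attributed to Konno \cite[Theorem 9.2.6]{K}, with no argument given. Your proposal correctly recognizes this attribution and then goes further, sketching the residue-theoretic strategy that Konno actually carries out. In that sense your write-up is not so much a different route from the paper as it is a supplement to the paper's bare citation.

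Your outline is faithful to Konno's approach---identification of the primitive Hodge groups with graded pieces $R_{b_q}$ of the pseudo-Jacobian ring, reduction of injectivity of $d\sP_X$ to the nondegeneracy of the multiplication pairings, and the Macaulay-type duality of Theorem \ref{macaulay}. A couple of points deserve care if you were to expand this. First, the step ``choose $e\ge 0$ with $b_{q_0}+e=\sigma-d$'' and the surjectivity of $R_{b_{q_0}}\otimes R_e\to R_{\sigma-d}$ are not automatic: the existence of the correct $e$ and the surjectivity of the relevant multiplication maps require case analysis in Konno's paper, and your parenthetical ``one may simply take the cup product out of the degree $b_{N-2}$ piece, for which $b_{N-2}+b_0=\sigma-d$ whenever $b_0\ge 0$'' only covers the situation $d\ge k(Y)$. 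Second, the exceptional cases you list at the end are handled by Konno (and in the companion papers \cite{K1}, \cite{K2}, \cite{K3}), not left open, so that Theorem \ref{torelli} holds without exclusions; your phrasing could be read as suggesting residual gaps. But as an outline of where the argument lives and what it depends on, your proposal is accurate.
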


Twisted forms provide a natural setting to apply the Generalized Adjoint Theory. Indeed $H^0(Y,\det(\Omega^{1}_{Y}(m))) $ is an irreducible representation and since, for every $m\geq 2$, $\Omega^{1}_Y(m)$ is globally generated, we have easily that any element of $H^0 (Y, \det( \Omega^{1}_{Y}(m)))$ is actually obtainable as a $\mathbb C$-linear combination of totally decomposable forms. More precisely we have, in the case $m=2$:
\begin{prop}
\label{surgettivita}
The natural map  induced by the wedge product 
\begin{equation}
\bigwedge^{N}H^0(X,\Omega^1_{Y}(2)|_X)\to H^0(X, \det (\Omega^1_{Y}(2)|_X))
\end{equation}is surjective.
\end{prop}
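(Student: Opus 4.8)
The plan is to deduce the surjectivity on $X$ from the corresponding surjectivity on $Y$, which was recorded in the paragraph preceding the Proposition: since $\Omega^1_Y(2)$ is globally generated, the induced morphism of sheaves $\bigwedge^N\big(H^0(Y,\Omega^1_Y(2))\otimes\sO_Y\big)\to\det(\Omega^1_Y(2))$ is surjective, so the image of $\Lambda^N\colon\bigwedge^N H^0(Y,\Omega^1_Y(2))\to H^0(Y,\det(\Omega^1_Y(2)))$ is a $G$-submodule that generates the line bundle $\det(\Omega^1_Y(2))=K_Y(2N)=\sO_Y(2N-k(Y))$; being nonzero and sitting inside the irreducible $G$-module $H^0(Y,\sO_Y(2N-k(Y)))$ (note $2N-k(Y)\geq N-1\geq 2$ since $k(Y)\leq N+1$ and $N\geq 3$), it must be everything. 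So the wedge map on $Y$ is onto.

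Next I would transfer this to $X$ by restricting sections. Since restriction commutes with wedge products, there is a commutative square
\[
\begin{CD}
\bigwedge^{N}H^0(Y,\Omega^1_Y(2)) @>>> H^0(Y,\det(\Omega^1_Y(2))) \\
@VVV @VVV \\
\bigwedge^{N}H^0(X,\Omega^1_Y(2)|_X) @>>> H^0(X,\det(\Omega^1_Y(2)|_X))
\end{CD}
\]
whose horizontal arrows are the wedge maps and whose vertical arrows are the restriction maps (here $\det(\Omega^1_Y(2))|_X=\det(\Omega^1_Y(2)|_X)$). The top arrow is surjective by the previous paragraph, and the left arrow is even an isomorphism, being $\bigwedge^N$ of the restriction isomorphism $H^0(Y,\Omega^1_Y(2))\xrightarrow{\sim}H^0(X,\Omega^1_Y(2)|_X)$ from (the proof of) Lemma \ref{isocom}. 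Hence the composite top-then-right is surjective; as it equals the composite left-then-bottom, the bottom arrow — which is the map in the statement — is surjective, provided the right arrow is surjective.

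Finally I would check that the right arrow, i.e. $H^0(Y,\sO_Y(2N-k(Y)))\to H^0(X,\sO_X(2N-k(Y)))$, is onto. This follows from the exact sequence
\[
0\to\sO_Y(2N-k(Y)-d)\to\sO_Y(2N-k(Y))\to\sO_X(2N-k(Y))\to 0
\]
(the structure sheaf sequence of $X$ twisted by $\sO_Y(2N-k(Y))$) together with the vanishing $H^1(Y,\sO_Y(2N-k(Y)-d))=0$, which is one of the standard line-bundle vanishings on $Y=G/P$ (recall $H^q(Y,\sO_Y(m))=0$ for $0<q<\dim Y$ and every $m$, and $\dim Y=N\geq 3$); see \cite[Theorem 3.1.1]{K}. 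The only genuine content here is the $Y$-statement invoked at the start; everything else is bookkeeping of the determinant line bundle and compatibility with restriction, so I do not expect a serious obstacle.
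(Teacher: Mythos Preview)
Your proof is correct and follows essentially the same approach as the paper: surjectivity on $Y$ via global generation of $\Omega^1_Y(2)$ together with Schur's Lemma (irreducibility of $H^0(Y,\sO_Y(2N-k(Y)))$), then transfer to $X$ using the restriction isomorphism of Lemma~\ref{isocom} and the vanishing $H^1(Y,\Omega^N_Y(2N-d))=0$ from \cite[Theorem 3.1.1]{K}. The only difference is presentational --- you make the commutative square explicit, while the paper is terser and folds the diagram chase into two sentences.
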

\begin{proof}
If $Y$ is given by $(\lieg,\alpha_r)$, we know that $\det (\Omega^1_{Y}(2))\cong\sO_Y(-k(Y)+2N)$ is irreducible with lowest weight $(k(Y)-2N)\lambda_r$.
By Borel-Weil theorem, $H^0(Y, \det (\Omega^1_{Y}(2)))$ is also an irreducible representation. 

We have already noticed that $\Omega^1_Y(2)$ is globally generated by its global sections, hence the natural homomorphism
\begin{equation}
\bigwedge^{N}H^0(Y,\Omega^1_{Y}(2))\to H^0(Y, \det (\Omega^1_{Y}(2)))
\end{equation} is non zero and hence it is surjective by Schur's Lemma.

 By Lemma \ref{isocom} we know that $H^0(Y,\Omega^1_{Y}(2))\cong H^0(X,\Omega^1_{Y}(2)|_X)$. Hence the claim follows if we show that $H^0(Y, \det (\Omega^1_{Y}(2)))\to H^0(X, \det (\Omega^1_{Y}(2)|_X))$ is surjective. Indeed this follows by the exact sequence
\begin{equation}
0\to \Omega^N_Y(2N-d)\to \Omega^N_Y(2N)\to \Omega^N_Y(2N)|_X\to0
\end{equation} and the vanishing of $H^1(Y,\Omega^N_Y(2N-d))$ (cf. \cite[Theorem 3.1.1]{K}).
\end{proof}

We link the global forms of $\Omega_Y^N(2N)$, which are objects coming from the ambient variety $Y$, to the infinitesimal deformations of $X\subset Y$ contained in pseudo-Jacobi ideal $\sJ_{d}$.
\begin{lem}\label{quattro}
Let $Y$ one of the spaces in Table \ref{tabellapesi} and $X=(\tau=0)$ a smooth hypersurface of degree $d\geq3$, except the cases $Y=Q^3$, $d=3$ and $Y=(D_3,\alpha_2)$, $d=3$. Then the infinitesimal deformation $R$ is in the pseudo-Jacobi ideal $\sJ_{d}$ if and only if $R\widetilde{\Omega}\in \sJ_{2N-k(Y)+d}$ for every section $\widetilde{\Omega}\in H^0(Y,\Omega_Y^N(2N))$ which restricts to a generalized Massey product relative to the vertical exact sequence of diagram (\ref{diagramma6})
$$
0\to \sO_X(2-d)\to \Omega^1_{Y}(2)|_X\to \Omega^1_X(2)\to 0
$$
\end{lem}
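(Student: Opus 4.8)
The plan is to reduce the statement to Theorem \ref{teoxi} (which already gives the equivalence between $R\in\sJ_d$ and the Massey product $\Omega$ lying in the image of $H^0(X,\sO_X(2))\otimes\lambda^NW\to H^0(X,\Omega_X^{N-1}(2N))$) and then translate the condition ``$\Omega$ is in that image'' into the condition ``$R\widetilde\Omega\in\sJ_{2N-k(Y)+d}$'' on the ambient variety $Y$. First I would recall from Section \ref{sez7} that a section $\widetilde\Omega\in H^0(Y,\Omega^N_Y(2N))$ which restricts to a generalized Massey product relative to the vertical sequence $0\to\sO_X(2-d)\to\Omega^1_Y(2)|_X\to\Omega^1_X(2)\to 0$ arises from liftings $\tilde s_1,\dots,\tilde s_N\in H^0(X,\Omega^1_Y(2)|_X)$ of a basis $\eta_1,\dots,\eta_N$ of a subspace $W\subset H^0(X,\Omega^1_X(2))$, and that, by Lemma \ref{isocom} and the vanishing of $H^1(\sO_X(2-d))$, these liftings come from genuine sections of $H^0(Y,\Omega^1_Y(2))$; by Proposition \ref{surgettivita} every element of $H^0(X,\det(\Omega^1_Y(2)|_X))$, in particular the restriction of $\widetilde\Omega$, is realized this way. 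Moreover, by the discussion preceding Theorem \ref{teoxi}, the diagonal splitting $\phi$ carries $\widetilde\Omega|_X$ to $\Omega=\widetilde\Omega|_X\cdot R|_X$, the $\xi$-Massey product; and $\lambda^NW$ corresponds under restriction to the span of the $\omega_i$, which is exactly the image of $H^0(Y,\Omega^N_Y(2N))$ in $H^0(X,\Omega^{N-1}_X(2N-d))$ restricted from the decomposable forms.

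Next I would make the key translation. The condition ``$\Omega\in\Ima\big(H^0(X,\sO_X(2))\otimes\lambda^NW\to H^0(X,\Omega^{N-1}_X(2N))\big)$'' says $\Omega=\sum_j g_j\,\omega_j$ with $g_j\in H^0(X,\sO_X(2))$ and $\omega_j$ the $\lambda^N$-images of the $\widehat{\eta_i}$-wedges. Using Proposition \ref{surgettivita} and the surjectivity of the restriction $H^0(Y,\Omega^N_Y(2N))\to H^0(X,\Omega^N_Y(2N)|_X)$ together with the surjectivity (for $d\ge 3$) of $H^0(Y,\sO_Y(2))\to H^0(X,\sO_X(2))$, which follows from $H^1(Y,\sO_Y(2-d))=0$ (cf. \cite[Theorem 3.1.1]{K}), one lifts this relation to $Y$: $R\widetilde\Omega$ becomes, after restriction to $X$, an element of the image of $H^0(Y,\sO_Y(2))\otimes(\text{span of }\widehat{\tilde s_i}\text{-wedges})$. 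Now the span of the $\lambda^N$-images of the $\tilde s_i$-wedges inside $H^0(Y,\Omega^{N-1}_Y(2N-d))\cong H^0(Y,\sO_Y(2N-k(Y)-d))$ — equivalently, the fact that each $\widehat{\tilde s_i}$-wedge is itself in $H^0(Y,\det\Omega^1_Y(2)\otimes\sO_Y(-2))$ — must be identified, via the pseudo-Jacobi formalism of Section \ref{green}, as a set of generators of (a subset of) the pseudo-Jacobi ideal $\sJ_{2N-k(Y)-d+\bullet}$ contracted with $\widetilde{d\tau}$. The point is that the liftability of the $\tilde s_i$ from $X$ to $Y$ — i.e. that $\eta_i$ lies in the kernel of $H^0(X,\Omega^1_X(2))\to H^1(X,\sO_X(2-d))$ — is precisely the statement that, via the connecting map and the sequence (\ref{prolongation}), the corresponding top form is contractible into the pseudo-Jacobi ideal; this is the homogeneous-space analogue of the computation in \cite[Section 5]{RZ3}. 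Combining, $\Omega\in\Ima(\cdots)$ on $X$ is equivalent to $R\widetilde\Omega\in\sJ_{2N-k(Y)+d}$ on $Y$, because multiplication by $R$ is exactly $\phi^n$ and the image of $H^0(X,\sO_X(2))\otimes\lambda^NW$ pulls back to $\sJ_{2N-k(Y)+d}$.

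Finally I would invoke Theorem \ref{teoxi}: the left-hand side $R\in\sJ_d$ is equivalent to $\Omega\in\Ima(\cdots)$ for a \emph{generic} $W$ with $\lambda^NW\neq 0$; and by Proposition \ref{surgettivita} a generic such $W$ does give $\lambda^NW\neq 0$ and a nonzero $D_W=0$, so genericity is harmless. One must be a little careful that the equivalence is ``for every $\widetilde\Omega$'' on one side and ``for generic $W$'' on the other; the resolution is that if $R\in\sJ_d$ then $\xi=0$, hence $\Omega=\widetilde\Omega|_X\cdot R|_X$ lies in the image for \emph{every} choice of liftings (not just generic), which forces $R\widetilde\Omega\in\sJ_{2N-k(Y)+d}$ for every $\widetilde\Omega$; conversely if $R\widetilde\Omega\in\sJ_{2N-k(Y)+d}$ for every such $\widetilde\Omega$, then in particular for a generic $W$, and Theorem \ref{teoxi} gives $R\in\sJ_d$. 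The excluded cases $Y=Q^3,d=3$ and $Y=(D_3,\alpha_2),d=3$ enter exactly where the Macaulay-type duality of Theorem \ref{macaulay} and the surjectivity onto $R_a^\vee$ fail, which is needed to guarantee that the pairing-induced identification of $\sJ_{2N-k(Y)+d}$ with the annihilator of $R_d$ is an isomorphism; I would single these out precisely as in Theorem \ref{macaulay}. The main obstacle I expect is the second paragraph: cleanly identifying the span of the $\widehat{\tilde s_i}$-wedges on $Y$ with the relevant pseudo-Jacobi-ideal generators, i.e. checking that ``liftability of $\eta_i$ to $Y$'' translates into the contraction-by-$\widetilde{d\tau}$ description, which requires unwinding the connecting homomorphism of (\ref{prolongation}) against the extension class of (\ref{twist1}) — this is the technical heart and is where the hypothesis $d\ge 3$ and the vanishings of \cite[Theorem 3.1.1]{K} are used repeatedly.
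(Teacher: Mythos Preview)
Your route differs substantially from the paper's, and the difference is not merely cosmetic: the step you flag as ``the main obstacle'' is in fact a genuine gap, and the paper avoids it entirely.

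The paper does \emph{not} pass through Theorem \ref{teoxi}. Its argument for the nontrivial direction (assuming $R\widetilde\Omega\in\sJ_{2N-k(Y)+d}$ for every such $\widetilde\Omega$ and concluding $R\in\sJ_d$) is a pure Macaulay-duality argument on $Y$. First, Proposition \ref{surgettivita} together with the restriction sequence $0\to\Omega^N_Y(2N-d)\to\Omega^N_Y(2N)\to\Omega^N_Y(2N)|_X\to 0$ shows that sections of $H^0(X,\Omega^N_Y(2N)|_X)$ are linear combinations of restricted Massey products, while the kernel $H^0(Y,\Omega^N_Y(2N-d))$ maps into $H^0(Y,\Omega^N_Y(2N))$ by multiplication by $\tau\in\sJ_d$. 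Hence the hypothesis upgrades to $R\cdot H^0(Y,\Omega^N_Y(2N))\subset\sJ_{2N-k(Y)+d}$, i.e.\ $R\cdot R_{2N-k(Y)}=0$ in the Jacobian ring. Then one checks that $R_{dN-2N-k(Y)}\otimes R_{2N-k(Y)}\to R_{dN-2k(Y)}=R_\sigma$ is surjective (again by irreducibility of the corresponding $H^0$ on $Y$), so $R\cdot R_{\sigma-d}=0$, and the surjection $R_{\sigma-d}\to R_d^\vee$ of Theorem \ref{macaulay} forces $R=0$ in $R_d$. The exclusion $(D_3,\alpha_2),\,d=3$ is because $H^0(Y,\Omega^N_Y(dN-2N))=0$ there, killing the needed multiplication map; the exclusion $Q^3,\,d=3$ is the failure of $R_{\sigma-d}\to R_d^\vee\to 0$ in Theorem \ref{macaulay}. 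The converse direction is immediate since $\sJ$ is an ideal.

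Your plan instead tries to prove the equivalence
\[
\Omega\in\Ima\big(H^0(X,\sO_X(2))\otimes\lambda^NW\to H^0(X,\Omega^{N-1}_X(2N))\big)\ \Longleftrightarrow\ R\widetilde\Omega\in\sJ_{2N-k(Y)+d}
\]
for each individual $W$, by identifying the span of the $\widehat{\tilde s_i}$-wedges on $Y$ with generators of the pseudo-Jacobi ideal. This identification is not established, and there is no reason to expect it: the forms $\omega_i$ depend only on the chosen $W\subset H^0(X,\Omega^1_X(2))$ and have nothing a priori to do with contraction by $\widetilde{d\tau}$. Even granting some inclusion, the image $H^0(X,\sO_X(2))\otimes\lambda^NW$ would only be a \emph{sub}space of $\sJ|_X$, so the two membership conditions are not equivalent for a fixed $W$. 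The argument that ``liftability of $\eta_i$ to $Y$'' is ``precisely'' a contraction-by-$\widetilde{d\tau}$ statement conflates two unrelated extensions (the conormal sequence of $X\subset Y$ and the sequence (\ref{prolongation}) defining $\Sigma_L$); these have different extension classes and there is no map between them doing what you want. In short, the translation you need does not hold pointwise in $W$; the paper's Macaulay argument works precisely because it aggregates over \emph{all} $\widetilde\Omega$ at once and then uses duality, rather than attempting a termwise identification.
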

\begin{proof}
Note that a generalized Massey product of this sequence is in fact an element of $$H^0(X,(\Omega_Y^N(2N))|_X)=H^0(X,\Omega_X^{N-1}(2N-d)).$$

We want to apply the Macaulay's theorem \ref{macaulay}. We only know that $R\widetilde{\Omega}\in \sJ_{2N-k(Y)+d}$ for $\widetilde{\Omega}\in H^0(Y,\Omega_Y^N(2N))$ which restricts to a generalized adjoint but this is in fact enough to have 
\begin{equation*}
R\cdot H^0(Y,\Omega_Y^N(2N))\subset \sJ_{2N-k(Y)+d}.
\end{equation*}
In fact consider the restriction sequence
\begin{equation*}
0\to \Omega_Y^N(2N-d)\to\Omega_Y^N(2N)\to\Omega_Y^N(2N)|_X\to 0.
\end{equation*} 
As we have seen in the proof of Proposition \ref{surgettivita},  
\begin{equation*}
0\to H^0(Y,\Omega_Y^N(2N-d))\to H^0(Y,\Omega_Y^N(2N))\to H^0(X,\Omega_Y^N(2N)|_X)\to0
\end{equation*} is exact. 
By the same proposition we can also assume that all the sections of $H^0(X,\Omega_Y^N(2N)|_X)$ are in fact linear combinations of generalized Massey products. Hence our hypothesis that $R\widetilde{\Omega}\in \sJ_{2N-k(Y)+d}$ for every section $\widetilde{\Omega}\in H^0(Y,\Omega_Y^N(2N))$ as above, together with the fact that the map $$H^0(Y,\Omega_Y^N(2N-d))\to H^0(Y,\Omega_Y^N(2N)))$$ is given by the multiplication by $\tau$, which is an element of the pseudo-Jacobi ideal, implies that 
\begin{equation*}
R\cdot H^0(Y,\Omega_Y^N(2N))\subset \sJ_{2N-k(Y)+d}.
\end{equation*}

Now we apply Macaulay's theorem \ref{macaulay} to deduce that $R$ is in the pseudo-Jacobi ideal. It is enough to show that
\begin{equation}
\label{surjok}
R_{dN-2N-k(Y)}\otimes R_{2N-k(Y)}\to R_{dN-2k(Y)}
\end{equation} is surjective. In fact if this is the case we have that $R\cdot R_{dN-2k(Y)}=0$ and by the surjectivity of $R_{dN-2k(Y)}\to R_{d}^\vee$ of Theorem \ref{macaulay} we conclude that $R=0$ in $R_d$, that is $R\in \sJ_d$.

The surjectivity of (\ref{surjok}) follows from the surjectivity at the level of the $H^0$:
\begin{equation*}
H^0(Y,\Omega_Y^N(dN-2N))\otimes H^0(Y,\Omega_Y^N(2N))\to H^0(Y,(\Omega_Y^{2N})(dN))
\end{equation*} which, similarly to the proof of Proposition \ref{surgettivita}, holds by the irreducibility of $H^0(Y,(\Omega_Y^{2N})(dN))$.
The case $Y=(D_3,\alpha_2)$, $d=3$ is excluded because in this case $H^0(Y,\Omega_Y^N(dN-2N))=0$.
\end{proof}

We conclude with our main theorem which relates the theory of infinitesimal Torelli theorem and Massey
products. This is Theorem \ref{A} from the Introduction.
\begin{thm}
\label{main}
Let $Y$ one of the spaces in Table \ref{tabellapesi} and $X=(\tau=0)$ a smooth hypersurface of degree $d\geq3$, except the cases $Y=Q^3$, $d=3$ and $Y=(D_3,\alpha_2)$, $d=3$. The following are equivalent
\begin{itemize}
	\item[\textit{i})] the differential of the period map $d\sP_X$ is zero on the infinitesimal deformation $\xi$ induced by $R\in H^0(Y,\sO_Y(d))$;
	\item[\textit{ii})] $R$ is an element of the pseudo-Jacobi ideal $\sJ_{d}$;
	\item[\textit{iii})]  for a generic $\xi$-Massey product $\Omega$ it holds $$\Omega\in \Ima H^0(X,\sO_X(2))\otimes \lambda^nW\to H^0(X,\Omega^{N-1}_X(2N));$$
	\item[\textit{iv})] if $\widetilde{\Omega}\in H^0(Y,\Omega_Y^N(2N))$ restricts to a generalized Massey product then $R\widetilde{\Omega}\in \sJ_{2N-k(Y)+d}$.
\end{itemize}
\end{thm}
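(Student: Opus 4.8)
The plan is to organise the four conditions around $\textit{ii})$ as a hub, proving $\textit{i}) \Leftrightarrow \textit{ii})$, $\textit{ii}) \Leftrightarrow \textit{iii})$ and $\textit{ii}) \Leftrightarrow \textit{iv})$ separately, so that the full cycle of equivalences follows. The two equivalences with the Massey product conditions are essentially already available: $\textit{ii}) \Leftrightarrow \textit{iii})$ is Theorem \ref{teoxi}, once one reads a generic $\xi$-Massey product as one attached to a generic $W = \langle\eta_1,\dots,\eta_N\rangle \subset H^0(X,\Omega^1_X(2))$ with $\lambda^n W \neq 0$, which exists by Theorem \ref{lungo} (so that $h^0(X,\Omega^1_X(2)) \geq N$) and for which $D_W = 0$ because $\Omega^1_X(2)$ is globally generated; and $\textit{ii}) \Leftrightarrow \textit{iv})$ is exactly Lemma \ref{quattro}. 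It is Lemma \ref{quattro} that forces the two excluded pairs: $Y = Q^3$, $d = 3$ is ruled out by the failure of the surjectivity $R_{\sigma - a} \to R^\vee_a$ in Macaulay's Theorem \ref{macaulay}, and $Y = (D_3,\alpha_2)$, $d = 3$ by the vanishing $H^0(Y,\Omega^N_Y(dN - 2N)) = 0$.

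The remaining link is $\textit{i}) \Leftrightarrow \textit{ii})$, and here Konno's infinitesimal Torelli theorem enters. By Proposition \ref{etutto} there are canonical identifications $R_d = H^0(Y,\sO_Y(d))/\sJ_d \cong H^1(X,T_X)$ under which the infinitesimal deformation $\xi$ induced by $X + tR = 0$ corresponds to the class $[R]$; hence $R \in \sJ_d$ if and only if $\xi = 0$ in $H^1(X,T_X)$. If $R \in \sJ_d$ then $\xi = 0$ and trivially $d\sP_X(\xi) = 0$. Conversely, if $d\sP_X(\xi) = 0$ then, since the infinitesimal period map is injective on $H^1(X,T_X)$ by Theorem \ref{torelli}, one gets $\xi = 0$, i.e. $R \in \sJ_d$. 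This establishes $\textit{i}) \Leftrightarrow \textit{ii})$ and closes the cycle $\textit{i}) \Leftrightarrow \textit{ii}) \Leftrightarrow \textit{iii}) \Leftrightarrow \textit{iv})$.

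I do not expect a genuine obstacle at the level of the present statement: once Theorem \ref{teoxi}, Lemma \ref{quattro} and Theorem \ref{torelli} are in hand, the proof is assembly together with the dictionary of Proposition \ref{etutto}. The real difficulty lies upstream — translating the abstract condition of the Generalized Adjoint Theorem \ref{theoremA} into membership in the pseudo-Jacobi ideal, via the diagonal splitting $\phi$ of diagram (\ref{diagramma6}) which realises multiplication by $R$ on top forms, and then collapsing the twisted statement back to degree $d$ through the Macaulay pairing; these steps are carried out in Theorem \ref{teoxi} and Lemma \ref{quattro}, while the bridge from the pseudo-Jacobi ideal to the vanishing of $d\sP_X$ is supplied entirely by the infinitesimal Torelli theorem.
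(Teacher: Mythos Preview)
Your proposal is correct and follows exactly the same route as the paper: the proof is organised around $\textit{ii})$ as a hub, with $\textit{i}) \Leftrightarrow \textit{ii})$ coming from Proposition \ref{etutto} together with Theorem \ref{torelli}, $\textit{ii}) \Leftrightarrow \textit{iii})$ from Theorem \ref{teoxi}, and $\textit{ii}) \Leftrightarrow \textit{iv})$ from Lemma \ref{quattro}. Your additional remarks on why the two exceptional pairs are excluded and on where the substantive work lies are accurate and match the paper's development.
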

%
\begin{proof}
$i)\Leftrightarrow ii)$. Denote by $d\sP$ the differential of the period map. If $d\sP(\xi)=0$ then, by Theorem \ref{torelli}, $\xi=0$. By Proposition \ref{etutto} this means $R=0$ in $R_d$, that is $R\in \sJ_{d}$. The converse is trivial by the same Proposition \ref{etutto}.  
%

$ii) \Leftrightarrow iii)$ This is Theorem \ref{teoxi}.

 $ii)\Leftrightarrow iv)$ This is the content of Lemma \ref{quattro}. 

\end{proof}

\section{Hypersurfaces in log parallelizable varieties}
\label{par}
As we mentioned earlier, in \cite{RZ2} the case of smooth hypersurfaces in projective spaces is studied and then as generalization we have tackled the case of smooth hypersurfaces in Grassmannians in \cite{RZ3} and the case of smooth hypersurfaces in rational homogeneous varieties with Picard number one in this paper.
A different road that can be taken starting form the case of projective spaces is the study of smooth hypersurfaces in toric varieties and, more in general, of smooth hypersurfaces in  log parallelizable varieties.
This is what we will do now.
 
Consider a pair $(X,D)$ where $X$ is a smooth variety and $D$ is a reduced normal crossing divisor in $X$. That is, $D$ is an effective divisor locally given on an open set $U$ by $z_1z_2\cdots z_k=0$. We can define the sheaf $\Omega^1_X(\text{log }D)$ of \emph{logarithmic differentials} as the locally free $\sO_X$-module generated by $dz_1/z_1,\ldots,dz_k/z_k,dz_{k+1},\ldots,dz_{n}$ and its dual $T_X(-\text{log }D)$ is the subsheaf of the tangent sheaf $T_X$ consisting of those derivations that preserve the ideal sheaf $\sO_X(-D)$. This is also locally free and locally is generated by $z_1\frac{\partial}{\partial z_1},\dots,z_k\frac{\partial}{\partial z_k},\frac{\partial}{\partial z_{k+1}},\dots,\frac{\partial}{\partial z_n}$.

Now assume that a connected algebraic group $G$ acts on $X$ and preserves $D$, then from the natural morphism of Lie algebras 
$$
\lieg\to \Gamma(X,T_X)
$$
we have a map of sheaves
$$
\sO_X\otimes\lieg\to T_X
$$ which factors through a map
$$
\sO_X\otimes\lieg\to T_X(-\text{log }D).
$$
The variety $X$ is said \emph{log homogeneous under $G$ with boundary $D$} if this map is surjective and \emph{log parallelizable under $G$ with boundary $D$} if this map is an isomorphism.
Actually if $X$ is complete, it is log homogeneous (resp. log parallelizable) under some group $G$ with boundary $D$ if and only if the sheaf $T_X(-\text{log }D)$ is generated by its global sections (resp. is trivial). We then say that $X$ is log homogeneous (resp. log parallelizable) with boundary $D$ without specifying the group $G$. For more details on log homogeneous varieties we refer to \cite{br1}, \cite{br2}.

Here we deal with log parallelizable varieties and we recall some useful facts for later use. 
Take $X$ a smooth $n$-dimensional log parallelizable variety with boundary $D=\sum_{j=1}^r D_j$.
From the fact that $T_X(-\text{log }D)$ is trivial, we immediately have that 
$$
\Omega^n_X(\text{log }D)=\bigwedge^n\Omega^1_X(\text{log }D)=\Omega_X^n(D)
$$ is a trivial line bundle and therefore the canonical sheaf is $\Omega_X^n=\sO_X(-D)=\sO_X(-\sum_{j=1}^r D_j)$.
Furthermore we have the exact sequences given by the residue
\begin{equation}
0\to \Omega^1_X\to \Omega^1_X(\text{log }D)\to \bigoplus_j\sO_{D_j}\to0
\label{es1}
\end{equation}
\begin{equation}
0\to \Omega^a_X(\text{log }(D-D_1))\to \Omega_X^a(\text{log }D)\to \Omega^{a-1}_{D_1}(\text{log }(D-D_1)|_{D_1})\to 0
\label{es2}
\end{equation}
and
\begin{equation}
0\to \Omega^a_X(\text{log }D)(-D_1)\to \Omega_X^a(\text{log }(D-D_1))\to \Omega^{a}_{D_1}(\text{log }(D-D_1)|_{D_1})\to 0
\label{es3}
\end{equation}
which are true for every $X$ with reduced and normal crossing $D$ (see \cite[Properties 2.3]{es}) but in our case are considerably simpler because $\Omega^a_X(\text{log }D)$ is a direct sum of copies of $\sO_X$.

From these sequences we obtain
\begin{prop}
\label{in}
Let $X$ be an $n$-dimensional log parallelizable variety with boundary $D=\sum_{j=1}^r D_i$. The
non-empty partial intersections of boundary divisors,
$$
D_{k_1,\dots,k_m}:=D_{k_1}\cap\dots\cap D_{k_m}
$$ are log parallelizable varieties with boundary the restriction of 
$$
D^{k_1,\dots,k_m}:=\sum_{j\neq k_1,\dots,k_m}D_j
$$
\end{prop}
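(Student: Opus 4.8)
The plan is to reduce the proposition to the case of a single boundary divisor, and then to produce a global frame of the logarithmic tangent sheaf of that divisor by restricting a global frame of $T_X(-\text{log }D)$. First I would argue, by induction on $m$, that it suffices to prove the case $m=1$: if $(X,D)$ is log parallelizable with $D=D_1+D'$, then $D_1$ is log parallelizable with boundary $D'|_{D_1}$. Granting this, for $m\geq 2$ one applies the inductive hypothesis to $Z:=D_{k_1,\dots,k_{m-1}}$, which is log parallelizable with boundary $B$, the restriction to $Z$ of $\sum_{j\notin\{k_1,\dots,k_{m-1}\}}D_j$; since $D$ is reduced normal crossing, $D_{k_m}\cap Z=D_{k_1,\dots,k_m}$ is a smooth boundary component of $(Z,B)$, and the $m=1$ statement applied to $(Z,B)$ shows that $D_{k_1,\dots,k_m}$ is log parallelizable with boundary the restriction of $\sum_{j\notin\{k_1,\dots,k_m\}}D_j$, which is the claim.

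So suppose $(X,D)$ is log parallelizable with $D=D_1+D'$, and put $n=\dim X$. Every global logarithmic vector field on $(X,D)$ is in particular tangent to $D_1$ and preserves the ideal of each component of $D'$, hence restricts to a section of $T_{D_1}(-\text{log }D'|_{D_1})$; this defines a homomorphism of sheaves $r\colon T_X(-\text{log }D)|_{D_1}\to T_{D_1}(-\text{log }D'|_{D_1})$. Working in the local coordinates of Section \ref{par}, with $D=(z_1\cdots z_k=0)$ and $D_1=(z_1=0)$, I would check that $r$ sends the generator $z_1\partial_{z_1}$ of $T_X(-\text{log }D)$ to $0$ and each of the remaining generators $z_2\partial_{z_2},\dots,z_k\partial_{z_k},\partial_{z_{k+1}},\dots,\partial_{z_n}$ to the corresponding generator of $T_{D_1}(-\text{log }D'|_{D_1})$; hence $r$ is surjective and $L:=\ker r$ is locally free of rank one, locally generated by the class of $z_1\partial_{z_1}$. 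Since $T_X(-\text{log }D)\cong\sO_X^{\oplus n}$ we get $T_X(-\text{log }D)|_{D_1}\cong\sO_{D_1}^{\oplus n}$ and an exact sequence
\begin{equation*}
0\to L\to\sO_{D_1}^{\oplus n}\to T_{D_1}(-\text{log }D'|_{D_1})\to 0 .
\end{equation*}

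Next I would show $L\cong\sO_{D_1}$: either by verifying directly that the local expressions $z_1\partial_{z_1}$ patch to a global nowhere-vanishing section of $L$ (this is the dual of the residue computation behind \eqref{es1}), or by comparing determinants: $\Omega^n_X(\text{log }D)=\Omega^n_X(D)$ is trivial, so $K_X\cong\sO_X(-D)$, whence by adjunction $K_{D_1}\cong\sO_{D_1}(-D'|_{D_1})$ and therefore $\det T_{D_1}(-\text{log }D'|_{D_1})\cong\sO_{D_1}$, and the exact sequence then forces $L\cong\sO_{D_1}$. Finally, the inclusion $\sO_{D_1}\cong L\hookrightarrow\sO_{D_1}^{\oplus n}$ is a nowhere-vanishing global section of $\sO_{D_1}^{\oplus n}$; as $X$, hence $D_1$, is complete, on each connected component this section is a nonzero constant vector, so it spans a trivial line subbundle admitting a constant complementary subbundle $\sO_{D_1}^{\oplus(n-1)}\subset\sO_{D_1}^{\oplus n}$ which maps isomorphically onto the quotient. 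Hence $T_{D_1}(-\text{log }D'|_{D_1})\cong\sO_{D_1}^{\oplus(n-1)}$, which finishes the case $m=1$ and the induction.

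The step I expect to be the main obstacle is the construction of $r$ together with the identification of its kernel: one must make sure that $r$ is globally well defined and surjective and that $L$ is trivial, which is really the content of the residue exact sequence for logarithmic tangent sheaves (dual to \eqref{es1}), read off from the local normal form of $T_X(-\text{log }D)$ recorded in Section \ref{par}. Completeness of $X$ is essential in the last step --- it is what turns the logarithmic-normal line subbundle $L$ into a constant subbundle and hence lets it split off trivially; knowing merely that $\det T_{D_1}(-\text{log }D'|_{D_1})$ is trivial would not be enough to conclude that $T_{D_1}(-\text{log }D'|_{D_1})$ itself is trivial.
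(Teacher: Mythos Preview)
Your proof is correct and is essentially the paper's argument dualized. Both reduce to the case $m=1$ and then exhibit the short exact sequence $0\to\sO_{D_1}\to\sO_{D_1}^{\oplus n}\to T_{D_1}(-\log D'|_{D_1})\to 0$ (in the paper, its dual $0\to\Omega^1_{D_1}(\log D'|_{D_1})\to\sO_{D_1}^{\oplus n}\to\sO_{D_1}\to 0$), and conclude by splitting it. The only difference is packaging: the paper obtains this sequence as the right column of a commutative diagram assembled from the residue sequences \eqref{es2} and \eqref{es3}, while you build the restriction map $r$ and identify its kernel by a direct local computation. Your version has the virtue of making explicit the role of completeness in the splitting step, which the paper hides behind the phrase ``it is clear''.
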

\begin{proof}
Consider the case of $D_1$ with boundary $D^1=D_2+\dots+D_r$. By Sequences (\ref{es2}) and (\ref{es3}) with $a=1$ we immediately have the commutative diagram 
\begin{equation}
\xymatrix{
&&0\ar[d]&0\ar[d]&\\
0\ar[r]&\bigoplus^n \sO_X(-D_1)\ar[r]\ar@{=}[d]&\Omega^1_X(\text{log }D^1)\ar[d]\ar[r]&\Omega^1_{D_1}(\text{log }D^1|_{D_1})\ar[r]\ar[d]&0\\
0\ar[r]&\bigoplus^n \sO_X(-D_1)\ar[r]&\bigoplus^n \sO_X\ar[d]\ar[r]&\bigoplus^n \sO_{D_1}\ar[r]\ar[d]&0\\
&&\sO_{D_1}\ar@{=}[r]\ar[d]&\sO_{D_1}\ar[d]&\\
&&0&0&
} \end{equation}
From this diagram it is clear that $\Omega^1_{D_1}(\text{log }D^1|_{D_1})\cong\bigoplus^{n-1}\sO_{D_1}$ is trivial.

The general statement for $D_{k_1,\dots,k_m}$ and boundary $D^{k_1,\dots,k_m}$ is obtaining by iteration of this argument.
\end{proof}
\begin{rmk}
Note that the canonical sheaf of $D_{k_1,\dots,k_m}$ is isomorphic to $\sO_{D_{k_1,\dots,k_m}}(-D^{k_1,\dots,k_m})$; this also comes from the adjunction formula.
\end{rmk}

We recall that toric varieties are an important subclass of log parallelizable varieties and some important cohomological results of toric varieties extend in fact to all log parallelizable varieties. The first one is the following
\begin{prop}
\label{van1}
Let $L$ be an ample invertible sheaf on a log parallelizable variety $X$. Then
$$
H^i(X,L)=0
$$ for all $i>0$.
\end{prop}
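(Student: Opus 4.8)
The plan is to mimic the standard toric‐variety proof of the Bott--Steenbrink--Danilov vanishing (as in \cite[Theorem 7.1]{cox} or \cite{es}) and to reduce everything to the fact that, on a log parallelizable variety, the logarithmic de Rham complex is a complex of trivial bundles. More precisely, I would use the degeneration of the Hodge--de Rham spectral sequence together with the Kodaira--Akizuki--Nakano–type vanishing in the logarithmic setting, but since all the sheaves $\Omega^a_X(\textnormal{log }D)$ are sums of copies of $\sO_X$, only the vanishing of $H^i(X,\sO_X(-D_{k_1,\dots,k_m}))$-type groups will actually enter.

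First I would set up the induction: write $D=\sum_{j=1}^r D_j$ and induct on $r$. For the base case $r=0$ the variety $X$ is parallelizable (an abelian variety, by Wang's theorem), and for an ample $L$ on an abelian variety $H^i(X,L)=0$ for $i>0$ is classical. For the inductive step, I would use the residue sequence (\ref{es3}) with $a=0$, namely
\begin{equation}
0\to \sO_X(-D_1)\to \sO_X\to \sO_{D_1}\to 0,
\end{equation}
twisted by $L$, to relate $H^i(X,L)$ to $H^i(X,L(-D_1))$ and $H^i(D_1,L|_{D_1})$. By Proposition \ref{in}, $D_1$ is log parallelizable with boundary $D^1$ having $r-1$ components, and $L|_{D_1}$ is still ample, so the induction hypothesis kills $H^i(D_1,L|_{D_1})$ for $i>0$. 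Thus it remains to control $H^i(X,L(-D_1))$, and iterating the same procedure for each boundary divisor one is reduced to $H^i(X,L(-D))=H^i(X,L\otimes\Omega^n_X)$ for $i>0$ (recall $\Omega^n_X=\sO_X(-D)$ from the discussion preceding Proposition \ref{in}); this last vanishing is exactly Kodaira vanishing for the ample line bundle $L$, which holds since $X$ is smooth and projective. Chasing the long exact sequences carefully then yields $H^i(X,L)=0$ for all $i>0$.

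Alternatively, and perhaps more in the spirit of \cite{cox}, one can prove the sharper statement $H^q(X,\Omega^p_X(\textnormal{log }D)\otimes L)=0$ for $q>0$ and $L$ ample by the same residue-sequence induction on the number of boundary components, feeding in Proposition \ref{in} and the triviality of $\Omega^p_X(\textnormal{log }D)$; the case $p=0$ of that statement is precisely the proposition. I would present whichever of the two organizations is shortest, most likely the direct one since the proposition only asks for $p=0$.

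The main obstacle, I expect, is being careful with the bookkeeping in the long exact cohomology sequences: the residue sequences introduce the auxiliary sheaves $L(-D_1)$, $L(-D_1-D_2)$, etc., and one must check at each stage that the relevant ambient group and boundary-divisor group both vanish in the right range, so that the connecting maps force the vanishing to propagate. A secondary point is justifying the base case cleanly---identifying a complete log parallelizable variety with empty boundary as (a quotient relevant to) an abelian variety and invoking the classical vanishing there---or, if one prefers to avoid that structural input, bottoming out the induction instead at $H^i(X,L\otimes\sO_X(-D))$ via Kodaira vanishing as sketched above, which is self-contained given $X$ smooth projective.
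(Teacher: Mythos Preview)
Your overall strategy---peel off boundary components one at a time via restriction sequences and bottom out at Kodaira---is the same as the paper's, but the induction you set up does not close as written. The issue is in the sentence ``iterating the same procedure for each boundary divisor one is reduced to $H^i(X,L(-D))$'': at the second step you meet
\[
0\to L(-D_1-D_2)\to L(-D_1)\to L(-D_1)|_{D_2}\to 0,
\]
and the right-hand term $L(-D_1)|_{D_2}$ is \emph{not} ample in general (e.g.\ $X=\mP^1\times\mP^1$ with its toric boundary, $L=\sO(1,1)$, $D_1$ and $D_2$ two rulings, gives $\sO_{\mP^1}$). Your induction hypothesis on $r$ only asserts vanishing for ample line bundles on varieties with fewer boundary components, so it says nothing about this term. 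You flag the bookkeeping as the obstacle, but the problem is structural: the hypothesis is too weak, not merely unwritten.

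The paper organises the same ingredients differently and this is what makes the argument go through. It inducts on $\dim X$ rather than on $r$, and it subtracts the \emph{entire} boundary first: the sequence $0\to L(-D)\to L\to L|_D\to 0$ kills the left term immediately by Kodaira since $\sO_X(-D)=\Omega^n_X$. One is then left with $H^i(D,L|_D)$, and the component-by-component decomposition of $D$ produces terms of the form $L(-D^{k_1,\dots,k_m})|_{D_{k_1,\dots,k_m}}$; because $D^{k_1,\dots,k_m}$ restricted to $D_{k_1,\dots,k_m}$ is exactly the canonical of that intersection (Proposition~\ref{in} and the remark after it), each such term is again Kodaira. The single remaining term $L|_{D_r}$ is ample on an $(n-1)$-dimensional log parallelizable variety and falls to the dimension induction. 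If you want to keep your induction on $r$, the clean fix is to strengthen the hypothesis to ``$H^i(X,L(-\sum_{j\in S}D_j))=0$ for every subset $S$ of the boundary components'', which is stable under both restriction to a $D_k$ and enlarging $S$, and whose extreme case $S=\{1,\dots,r\}$ is Kodaira.
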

\begin{proof}
We work using induction on $n=\dim X$.
Consider the exact sequence 
\begin{equation}
0\to L(-D)\to L\to L|_D\to 0
\label{induz}
\end{equation}
Recall that the canonical sheaf of $X$ is $\sO_X(-D)$ hence $H^i(X,L(-D))$ vanishes for $i>0$ by Kodaira vanishing.

If $n=1$ it immediately follows that $H^1(X,L)=0$.

If $n>1$ we only have to prove the vanishings of $H^i(D,L|_D)$ which come from the induction hypothesis with some caution on the number of components of the divisor $D$. In fact if $D$ has only one irreducible component this vanishing is immediate by induction because by Proposition \ref{in} we have that $D$ is log parallelizable. Otherwise we write $D=D_1+D^1$ as before and by the exact sequence 
$$
0\to \sO_{D_1}(-D^1)\to\sO_D\to \sO_{D^1}\to 0
$$ tensored by $L$ we have 
$$
0\to L(-D^1)|_{D_1}\to L|_D\to L|_{D^1}\to 0.
$$
The vanishing of the higher cohomology of $L(-D^1)|_{D_1}$ comes again by Kodaira vanishing, on the other hand for $L|_{D^1}$ we can iterate the process and reduce again the number of components
$$
0\to L(-D^{1,2})|_{D_2}\to L|_{D^1}\to L|_{D^{1,2}}\to 0.
$$
Now note that the vanishing of the higher cohomology of $L(-D^{1,2})|_{D_2}$ is not immediate by Kodaira vanishing because $\sO_{D_2}(-D^{1,2})$ is not the canonical sheaf of $D_2$, in fact $\sO_{D_2}(-D^2)$ is. Nevertheless by the exact sequence
$$
0\to L(-D^{2})|_{D_2}\to L(-D^{1,2})|_{D_2}\to L(-D^{1,2})|_{D_{1,2}}\to 0
$$
we see that the higher cohomologies of $L(-D^{1,2})|_{D_2}$ vanish because those of $L(-D^{2})|_{D_2}$ and $L(-D^{1,2})|_{D_{1,2}}$ vanish by Kodaira on $D_2$ and $D_{1,2}$ respectively.
Hence it is not difficult to see that we can iterate this process until we obtain all the necessary vanishings.
\end{proof}

The second important result is the following theorem, which for toric varieties is called the vanishing theorem of Bott-Steenbrink-Danilov \cite[Theorem 7.1]{cox}
\begin{thm}
\label{bottst}
Let $L$ be an ample invertible sheaf on a log parallelizable variety $X$ with boundary $D$, then
$$
H^i(X,\Omega^j_X\otimes L)=0
$$ for every $j\geq0$ and $i>0$.
\end{thm}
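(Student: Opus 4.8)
The plan is to argue by induction on $n=\dim X$, the case $n\leq1$ being immediate (a complete log parallelizable curve is an elliptic curve or a two-pointed $\mP^1$, and the statement then follows from Kodaira vanishing and Serre duality). For the inductive step the natural move is to prove the slightly stronger assertion that $H^i(X,\Omega^j_X(\text{log }D')\otimes L)=0$ for all $i>0$, all $j\geq0$ and every reduced sub-divisor $D'\subseteq D$ (a sum of some of the irreducible components of $D$); the theorem is the case $D'=0$. Two devices make this work: (a) Proposition \ref{in}, which says that each partial intersection $D_I$ of boundary components is again log parallelizable of dimension $<n$, with boundary the restriction of the \emph{complementary} components, so that the relevant logarithmic wedge powers on $D_I$ are trivial bundles and Proposition \ref{van1} applies to them after twisting by the (still ample) restriction of $L$; and (b) the residue exact sequences (\ref{es2}) and (\ref{es3}), which allow one to pass between logarithmic forms on $X$ with boundary $D'$, those with boundary $D'+D_k$, and forms on the divisor $D_k$ itself.

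Concretely I would run a secondary, downward induction on the number of components of $D$ not lying in $D'$: the base case $D'=D$ is exactly the triviality of $\Omega^j_X(\text{log }D)$ combined with Proposition \ref{van1}. For the step, choose a component $D_k\not\subseteq D'$, set $D''=D'+D_k$, and tensor (\ref{es2}) and (\ref{es3}) (with $D,D_1$ there replaced by $D'',D_k$) with $L$. In the associated long exact sequences, the terms living on $D_k$ are of the form $\Omega^{\bullet}_{D_k}(\text{log }D'|_{D_k})\otimes L|_{D_k}$ with $D'|_{D_k}$ a sub-divisor of the full boundary of $D_k$, hence acyclic in positive degrees by the dimension induction; and $H^{>0}(X,\Omega^j_X(\text{log }D'')\otimes L)=0$ by the secondary induction. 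This already delivers $H^i(X,\Omega^j_X(\text{log }D')\otimes L)=0$ for every $i\geq2$.

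The case $i=1$ is the one I expect to be the real obstacle. The single sequence (\ref{es2}) only identifies $H^1(X,\Omega^j_X(\text{log }D')\otimes L)$ with the cokernel of a restriction map on global sections, and the surjectivity of that map is equivalent to the vanishing one wants, so (\ref{es2}) by itself is circular. To break the loop I would bring in (\ref{es3}) together with the elementary restriction sequence $0\to\sF(-D_k)\to\sF\to\sF|_{D_k}\to0$, which reduces the problem first to $H^1(X,\Omega^j_X(\text{log }D'')(-D_k)\otimes L)=0$ and then, by peeling the boundary off one component at a time (carrying the appropriately twisted versions of the strengthened statement through the induction), to the vanishing $H^i(X,L(-D_S))=0$ for $i>0$ and $D_S$ an arbitrary sum of boundary components. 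At this last point one invokes the defining property $\Omega^n_X\cong\sO_X(-D)$ of log parallelizable varieties to rewrite $L(-D_S)\cong\Omega^n_X\otimes L(D-D_S)$ with $D-D_S$ a reduced simple normal crossing divisor and $L$ ample; the required vanishing is then the Norimatsu-type refinement of Kodaira--Akizuki--Nakano vanishing for simple normal crossing pairs, itself provable by an easy induction on the number of components via adjunction and Kodaira vanishing on $X$ and on the $D_i$. Threading this positivity input back through the residue sequences finishes the argument. The bulk of the work lies precisely in this $i=1$ bookkeeping -- matching the cokernels produced by (\ref{es2}) against the vanishings supplied by (\ref{es3}), the dimension induction, and the Norimatsu vanishing; the $i\geq2$ part and the reduction to boundary strata are formal.
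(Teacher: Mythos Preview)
Your approach is workable, but the paper takes a much cleaner route that sidesteps exactly the $i=1$ difficulty you identify. Instead of proving $H^i(X,\Omega^j_X\otimes L)=0$ for $i>0$ directly, the paper proves the Serre--dual statement $H^i(X,\Omega^j_X\otimes L^{-1})=0$ for $i<n$ and applies Serre duality at the very end. The base case is then $H^i(X,\Omega^j_X(\text{log }D)\otimes L^{-1})=0$ for $i<n$, which is a direct sum of copies of Kodaira vanishing for $L^{-1}$. One removes components of $D$ one at a time via (\ref{es2}) tensored with $L^{-1}$; the quotient term lives on a boundary component $D_1$, which is again log parallelizable by Proposition~\ref{in}, and the required vanishing on $D_1$ holds in degrees $<n-1$ by the previous step (or the dimension induction). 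The point is that after dualizing, the delicate boundary degree is $i=n-1$ rather than $i=1$, and the dimension drop on $D_1$ absorbs it automatically: there is no cokernel to chase. Your Norimatsu-type input $H^i(X,K_X\otimes L(D-D_S))=0$ is essentially the Serre dual of this same peeling argument in the special case $j=0$, so you are effectively rederiving the paper's mechanism as a sub-lemma and then feeding it back into the more awkward positive-twist induction. Both arguments are correct; the paper's is shorter because the single dualization replaces all of your $i=1$ bookkeeping.
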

\begin{proof}
Note that the vanishing of $H^i(X,\Omega^j_X(\textnormal{log }D)\otimes L)$ for $j\geq0$ and $i>0$ is immediate by Proposition \ref{van1} because $\Omega^j_X(\textnormal{log }D)\otimes L$ is just a direct sum of copies of $L$. Similarly the vanishing of $H^i(X,\Omega^j_X(\textnormal{log }D)\otimes L^{-1})$ for $j\geq0$ and $i<N$ follows by Kodaira vanishing theorem.

Now we use decreasing induction on the number of irreducible components of $D$ using Sequence \ref{es2} tensored by $L^{-1}$.
If we denote as before $D^1=D-D_1$, then the vanishing of 
$$H^i(X,\Omega^j_X(\textnormal{log }D^1)\otimes L^{-1})$$
for $j\geq0$ and $i<N$ follows from the vanishing of $H^{i-1}(D_1,\Omega^{j-1}_{D_1}(\textnormal{log }D^1)\otimes L^{-1}|_{D_1})$ and $H^i(X,\Omega^j_X(\textnormal{log }D)\otimes L^{-1})$ which hold by the previous step.

Continuing to remove components of $D$ at the end we obtain the vanishing of 
$$
H^i(X,\Omega^j_X\otimes L^{-1})
$$ $j\geq0$ and $i<N$ which by Serre duality gives the thesis.
\end{proof}
In the case of log-homogeneous varieties this vanishings hold for $i>j$, see \cite{br2}.

We end this section with an infinitesimal Torelli result for certain smooth hypersurfaces in log parallelizable varieties. The analogue for hypersurfaces in toric varieties is in \cite{Ikeda}.
 
Take an ample line bundle $L$ on $X$ and $Z$ a smooth element in the linear system $|L|$. We assume that $\Omega^n_X\otimes L$ is ample and that 
\begin{equation}
H^0(X,\Omega^n_X\otimes L)\otimes H^0(X,\Omega^n_X\otimes L^{n-1})\to H^0(X,{\Omega^n_X}\otimes\Omega^n_X\otimes L^n)
\label{multip}
\end{equation}
is surjective. This second hypothesis is strictly related to the following problem which is, at least for toric varieties, deeply studied:
\begin{prob}
For every $L_1$ and $L_2$ ample line bundles on a smooth projective toric variety $X$ the multiplication map  
\begin{equation}
H^0(X,L_1)\otimes H^0(X,L_{2})\to H^0(X,L_1\otimes L_2)
\end{equation} is surjective.
\end{prob}
Fakhruddin \cite{Fa} proved that this question has a positive answer for an ample line
bundle $L_1$ and a globally generated line bundle $L_2$ on a smooth toric surface.
 Moreover it is well known that for an ample line bundle L on a toric variety of dimension $n$, the multiplication map
$$
H^0(X,L^m)\otimes H^0(X,L)\to H^0(X,L^{m+1})
$$
is surjective for $m\geq n-1$. Hence our second hypothesis is very natural since we assume that $\Omega^n_X\otimes L$ is ample.

\begin{thm}
\label{torellilog}
Let $Z$ be a smooth element in the linear system $|L|$ with $L$ as above. Then the infinitesimal Torelli theorem holds for $Z$. 
\end{thm}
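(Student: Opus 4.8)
The strategy is to run Green's method for hypersurface sections \cite{green1}, \cite{green2}, the point being that the ``sufficiently ample'' hypotheses there become unnecessary in the log parallelizable setting because of Theorem \ref{bottst}. Write $m=n-1=\dim Z$. By adjunction together with $\Omega^n_X=\sO_X(-D)$ one has $\omega_Z=(\Omega^n_X\otimes L)|_Z$, which is ample by hypothesis. First I would attach to $Z=(\tau=0)\subset X$ the pseudo-Jacobi ring of Section \ref{green}, that is the collection of spaces $R_{\mathcal M}:=H^0(X,\mathcal M)/\sJ_{\mathcal M,\tau}$ with their multiplications $R_{\mathcal M}\otimes R_{\mathcal N}\to R_{\mathcal M\otimes\mathcal N}$, and recall Green's residue description of the primitive cohomology of $Z$:
$$H^{m-p,\,p}_{\mathrm{prim}}(Z)\ \cong\ R_{\Omega^n_X\otimes L^{p+1}}\qquad(0\le p\le m),$$
the bottom piece being $H^0(Z,\omega_Z)=R_{\Omega^n_X\otimes L}$. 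Moreover a standard diagram chase with the normal bundle sequences of $Z\subset X$ (using the vanishings of Theorem \ref{bottst}, exactly as in Proposition \ref{etutto}) identifies $H^1(Z,T_Z)$ with $R_L$, so that every infinitesimal deformation $\xi$ is induced by some $R\in H^0(X,L)$, and the $p$-th component of $d\sP_Z(\xi)$ is multiplication by $\xi$, i.e. $R_{\Omega^n_X\otimes L^{p+1}}\to R_{\Omega^n_X\otimes L^{p+2}}$.

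Two facts from Section \ref{par} make this work. First, all the cohomology vanishings entering the residue identification are of the shape $H^i(X,\Omega^j_X\otimes M)=0$ with $M$ ample and $i>0$ (equivalently, by Serre duality on $X$, with $M$ antiample and $i<n$), and these hold for \emph{every} ample $L$ by Theorem \ref{bottst}; the relevant sequences are the residue sequences (\ref{es1})--(\ref{es3}), Proposition \ref{van1}, and the restriction sequence $0\to\Omega^n_X\to\Omega^n_X\otimes L\to\omega_Z\to0$, the ampleness of $\Omega^n_X\otimes L$ being what isolates the bottom Hodge piece. Second, a Macaulay-type self-duality for the ring: there is a one-dimensional socle $R_{\mathrm{soc}}=R_{\Omega^n_X\otimes\Omega^n_X\otimes L^{n+1}}\cong\mC$ and the pairing
$$R_{L}\ \times\ R_{\Omega^n_X\otimes\Omega^n_X\otimes L^{n}}\ \longrightarrow\ R_{\mathrm{soc}}$$
is non-degenerate in the first variable. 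This is where log parallelizability — i.e. $T_X(-\log D)\cong\sO_X^{\oplus n}$ — is used crucially: picking global sections $\theta_1,\dots,\theta_n$ trivialising $T_X(-\log D)$ and letting them act on $L$ through the operators of Section \ref{green}, the $n+1$ sections $\tau,\theta_1(\tau),\dots,\theta_n(\tau)\in H^0(X,L)$ have no common zero when $Z$ is in general position with respect to $D$, so their Koszul complex is a complete intersection type resolution, and Serre duality on $X$ pins the socle down to $\mC$; this is the analogue, in the present setting, of Theorem \ref{macaulay}.

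Granting these, the conclusion follows exactly as in the proof of Lemma \ref{quattro}. If $d\sP_Z(\xi)=0$ then in particular its $p=0$ component vanishes, i.e. $\xi\cdot R_{\Omega^n_X\otimes L}=0$ in $R_{\Omega^n_X\otimes L^{2}}$; multiplying by $R_{\Omega^n_X\otimes L^{n-1}}$ gives $\xi\cdot\big(R_{\Omega^n_X\otimes L}\cdot R_{\Omega^n_X\otimes L^{n-1}}\big)=0$, and by the hypothesis the multiplication $H^0(X,\Omega^n_X\otimes L)\otimes H^0(X,\Omega^n_X\otimes L^{n-1})\to H^0(X,\Omega^n_X\otimes\Omega^n_X\otimes L^{n})$ is onto, whence $R_{\Omega^n_X\otimes L}\cdot R_{\Omega^n_X\otimes L^{n-1}}=R_{\Omega^n_X\otimes\Omega^n_X\otimes L^{n}}$, so that $\xi\cdot R_{\Omega^n_X\otimes\Omega^n_X\otimes L^{n}}=0$ inside $R_{\mathrm{soc}}\cong\mC$. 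The non-degeneracy of the pairing above then forces $\xi=0$. The two copies of $\Omega^n_X=K_X$ on the right-hand side of the hypothesis play precisely the role of the $K_Y^{\otimes 2}$ occurring in the proof of Lemma \ref{quattro}. The low-dimensional cases $n\le2$ ($Z$ a point or a curve) are classical and dealt with directly.

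I expect the main obstacle to be the second fact above: establishing the Macaulay-type self-duality in this generality — the exactness of the Koszul complex of $\tau,\theta_1(\tau),\dots,\theta_n(\tau)$ and the identification of the socle via Serre duality — together with the attendant point that $Z$ must meet the boundary $D$ suitably transversally (as in the toric case \cite{Ikeda}) for the $\theta_i(\tau)$ to have no common zero, and the need to rule out, or otherwise handle, infinitesimal deformations of $Z$ not induced by the ambient $X$, which can occur when $H^1(X,\sO_X)\ne0$. By contrast the first fact is Green's computation carried out verbatim, with Theorem \ref{bottst} supplying in one stroke the vanishings that in the general case require $L$ to be sufficiently ample.
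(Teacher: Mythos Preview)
Your route via the pseudo-Jacobi ring and a Macaulay-type duality is not the one the paper takes. The paper argues directly on the Serre-dual of the top piece of the differential: it shows that
\[
H^{n-2}(Z,\Omega^1_Z)\otimes H^0(Z,\Omega^{n-1}_Z)\longrightarrow H^{n-2}(Z,\Omega^{n-1}_Z\otimes\Omega^1_Z)
\]
is surjective by constructing a commutative square whose top row is the assumed multiplication map (\ref{multip}) and whose right vertical arrow $\psi\colon H^0(X,\Omega^n_X\otimes\Omega^n_X\otimes L^n)\to H^{n-2}(Z,\Omega^1_Z\otimes\Omega^{n-1}_Z)$ is built from residues and the long exact sequence (\ref{seqlog}), namely $0\to\Omega^2_X(\textnormal{log }Z)\to\Omega^2_X(Z)\to\Omega^3_X(2Z)|_Z\to\cdots\to\Omega^n_X((n-1)Z)|_Z\to 0$, together with its twist by $\Omega^n_X\otimes L$. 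The surjectivity of $\psi$ is obtained entirely from Theorem \ref{bottst}; there is no Jacobi ring, no Koszul complex, no socle computation.

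This matters because the two obstacles you correctly flag are genuine and are not covered by the hypotheses of the theorem. First, your Koszul/Macaulay step needs the $n+1$ sections $\tau,\theta_1(\tau),\dots,\theta_n(\tau)$ to have no common zero; for $D\ne\emptyset$ this amounts to a transversality condition on $Z\cap D$ that the statement does not assume. Second, and more seriously, the identification $H^1(Z,T_Z)\cong R_L$ can fail outright: abelian varieties are log parallelizable with $D=\emptyset$, and there $H^1(X,T_X)\ne 0$, so $Z$ has first-order deformations not coming from $H^0(X,L)$, about which your argument says nothing. The paper's dual-surjectivity argument handles the full $H^1(Z,T_Z)$ without ever asking whether deformations are ambient, which is precisely what makes it go through in the stated generality.
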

\begin{proof}
To prove that the infinitesimal Torelli holds for $Z$ it is enough to show that the map
\begin{equation}
\label{high}
H^1(X,T_Z)\to \text{Hom}(H^0(Z,\Omega^{n-1}_Z),H^1(Z,\Omega^{n-2}_Z))
\end{equation} is injective. In fact this map is the highest piece of the derivative of the period map, hence if (\ref{high}) is injective, the derivative of the period map is itself injective.

The idea is to prove the surjectivity of the dual of (\ref{high}), which is 
\begin{equation}
H^1(Z,\Omega^{n-2}_Z)^\vee\otimes H^0(Z,\Omega^{n-1}_Z)\to H^1(Z,T_Z)^\vee.
\label{highdual}
\end{equation} that is by Serre duality 
\begin{equation}
H^{n-2}(Z,\Omega^{1}_Z)\otimes H^0(Z,\Omega^{n-1}_Z)\to H^{n-2}(Z,\Omega^{n-1}_Z\otimes\Omega^1_Z)
\end{equation} 

First note that from the isomorphism $H^0(X,\Omega^n_X(Z))\cong H^0(X,\Omega^n_X(\textnormal{log }Z))$ composed with the residue map we obtain a map
\begin{equation*}
\phi\colon H^0(X,\Omega^n_X(Z))\cong H^0(X,\Omega^n_X(\textnormal{log }Z))\stackrel{Res}{\rightarrow} H^0(Z,\Omega^{n-1}_Z).
\end{equation*}

Now consider the exact sequence 
\begin{equation}
\label{seqlog}
0\to \Omega^2_X(\textnormal{log }Z)\to \Omega^2_X(Z)\to \Omega^3_X(2Z)|_Z\to \Omega^4_X(3Z)|_Z\to \dots\to \Omega^n_X((n-1)Z)|_Z\to 0
\end{equation}
This gives in cohomology a map
\begin{equation*}
H^0(Z,\Omega^n_X((n-1)Z)|_Z)\to H^{n-2}(X,\Omega^2_X(\textnormal{log }Z))
\end{equation*} which composed with restriction and the residue gives
\begin{equation*}
\phi'\colon H^0(X,\Omega^n_X((n-1)Z))\to H^0(Z,\Omega^n_X((n-1)Z)|_Z)\to H^{n-2}(X,\Omega^2_X(\textnormal{log }Z))\stackrel{Res}{\rightarrow} H^{n-2}(Z,\Omega^{1}_Z).
\end{equation*}

Taking the tensor product of Sequence (\ref{seqlog}) with the ample line bundle $\Omega^n_X\otimes L$ we obtain the exact sequence 
\begin{equation}
\begin{split}
0\to \Omega^2_X(\textnormal{log }Z)\otimes\Omega^n_X(Z)\to \Omega^2_X(Z)\otimes\Omega^n_X(Z)\to \Omega^3_X(2Z)|_Z\otimes\Omega^{n-1}_Z\to \dots\\\dots\to \Omega^n_X((n-1)Z)|_Z\otimes\Omega^{n-1}_Z\to 0.
\end{split}
\end{equation}
By Theorem \ref{bottst} and sequence 
$$
0\to \Omega_X^j((j-2)Z)\otimes\Omega^n_X(Z)\to \Omega_X^j((j-1)Z)\otimes\Omega^n_X(Z)\to \Omega_X^j((j-1)Z)|_Z\otimes\Omega^{n-1}_Z\to 0
$$ it is not difficult to see that $H^i(Z,\Omega_X^j((j-1)Z)|_Z\otimes\Omega^{n-1}_Z)=0$ for $i\geq1$ and since also $H^{n-1}(X,\Omega^2_X(Z)\otimes\Omega^n_X(Z))=0$ we have a surjective map
\begin{equation}
H^0(Z,\Omega^n_X((n-1)Z)|_Z\otimes\Omega^{n-1}_Z)\to H^{n-2}(X,\Omega^2_X(\textnormal{log }Z)\otimes\Omega^n_X(Z)).
\label{questa}
\end{equation}

By the exact sequence 
$$
0\to \Omega^2_X\otimes \Omega^n_X(Z)\to \Omega^2_X(\textnormal{log }Z)\otimes \Omega^n_X(Z)\to \Omega^1_Z\otimes \Omega^{n-1}_Z\to 0
$$ and the vanishing of $H^{n-1}(X,\Omega^2_X\otimes \Omega^n_X(Z))$ we have that the residue map 
$$
H^{n-2}(X,\Omega^2_X(\textnormal{log }Z)\otimes \Omega^n_X(Z))\to H^{n-2}(Z,\Omega^1_Z\otimes \Omega^{n-1}_Z)
$$ is also surjective.

Finally $H^1(X,\Omega^n_X\otimes\Omega^{n}_X((n-1)Z))=0$ and the restriction 
$$
H^0(X,\Omega^n_X((n-1)Z)\otimes\Omega^{n}_X(Z))\to H^0(Z,\Omega^n_X((n-1)Z)|_Z\otimes\Omega^{n-1}_Z)
$$ is also surjective.

Hence taking the composition of (\ref{questa}) together with the residue and the restriction we have a surjective map
$$
\psi\colon H^0(X,\Omega^n_X\otimes\Omega^{n}_X(nZ))\to H^{n-2}(Z,\Omega^1_Z\otimes \Omega^{n-1}_Z).
$$

We can write the following commutative diagram
\begin{equation}
\xymatrix{
H^0(X,\Omega^n_X(Z))\otimes H^0(X,\Omega^n_X((n-1)Z))\ar^-{\phi\otimes\phi'}[d]\ar[r]&H^0(X,\Omega^n_X\otimes\Omega^{n}_X(nZ))\ar^-{\psi}[d]\\
H^0(Z,\Omega^{n-1}_Z)\otimes H^{n-2}(Z,\Omega^{1}_Z)\ar[r]&H^{n-2}(Z,\Omega^1_Z\otimes \Omega^{n-1}_Z)
} \end{equation}
Since $\psi$ is surjective and the top row is surjective by our hypothesis on $L$ we are done.
\end{proof}


\end{document}